\newcommand{\opm}{ 
  \mathbin{
    \mathchoice
      {\buildcirclepm{\displaystyle     }{0.14ex}{0.95}{0.05ex}{.7}}
      {\buildcirclepm{\textstyle        }{0.14ex}{0.95}{0.05ex}{.7}}
      {\buildcirclepm{\scriptstyle      }{0.13ex}{0.955}{0.04ex}{.55}}
      {\buildcirclepm{\scriptscriptstyle}{0.08ex}{0.95}{0.03ex}{.45}}
  } 
}
\newcommand\buildcirclepm[5]{%
  \begin{tikzpicture}[baseline=(X.base), inner sep=-#5, outer sep=-.65]
    \node[draw,circle,line width=#4] (X)  {\footnotesize\raisebox{#2}{\scalebox{#3}{$#1\pm$}}};
  \end{tikzpicture}%
}
\newtheorem*{main}{The main theorem}
\newtheorem{thm}{Theorem}[section]
\newtheorem{lem}[thm]{Lemma}
\newtheorem{prop}[thm]{Proposition}
\theoremstyle{remark}
\newtheorem{rem}{Remark}
\providecommand{\customgenericname}{}
\newcommand{\newcustomtheorem}[2]{%
  \newenvironment{#1}[1]
  {%
   \renewcommand\customgenericname{#2}%
   \renewcommand\theinnercustomgeneric{##1}%
   \innercustomgeneric
  }
  {\endinnercustomgeneric}
}
\numberwithin{equation}{section}
\newcommand{\quash}[1]{}
\theoremstyle{definition}
\numberwithin{equation}{subsection}
\title{Secondary terms in asymptotics for the number of zeros of quadratic forms}
\author{Thomas H. Tran
        }
\thanks{
                  Department of Mathematics, Duke University, tran2015@math.duke.edu}
\begin{document}
\newpage
\maketitle
\begin{abstract}

Let $F$ be a non-degenerate quadratic form on an $n$-dimensional vector space $V$ over the rational numbers. One is interested in counting the number of zeros of the quadratic form whose coordinates are restricted in a smoothed box of size $B$, roughly speaking. For example, Heath-Brown gave an asymptotic of the form: $c_1 B^{n-2} +O_{J,\epsilon, \omega}(B^{(n-1)/2+\epsilon})$, for any $\epsilon > 0$ and dim$V \geq 5$, where $c_1 \in \mathbb{C}$ and $\omega \in \mathcal{S}(V(\mathbb{R}))$ is a smooth function. More recently, Getz gave an asymptotic of the form: $c_1 B^{n-2} + c_2 B^{n/2}+O_{J,\epsilon, \omega}(B^{n/2+\epsilon-1})$ when $n$ is even, in which $c_2 \in \mathds{C}$ has a pleasant geometric interpretation. We consider the case where $n$ is odd and give an analogous asymptotic of the form: $c_1 B^{n-2} +c_2B^{(n-1)/2}+O_{J,\epsilon,\omega}(B^{n/2+\epsilon-1})$. Notably it turns out that the geometric interpretation of the constant $c_2$ of the asymptotic in the odd degree and even degree cases is strikingly different.
\end{abstract}

\tableofcontents

\section{Introduction and Statement of Results}

The circle method, introduced by G. H. Hardy and S. Ramanujan in 1918 and then improved by G. H. Hardy and J. E. Littlewood a few years later, is usually applied to count the number of zeros of Diophantine equations. In most cases, one only obtains the main term in the asymptotic. This changed in the recent papers of Schindler, Getz, Vaughn and Wooley (see \cite{S,G,VW}). In this paper, we continue this investigation on quadratic form following the Health-Brown's new version of the circle method, which is rooted from a double-summation representation of the $\delta$-symbol that we will address in the next section. We consider odd number of variables of the quadratic form and wish to count the number of its zeros whose heights are at most $B$, where real number $B \geq 1$. Roughly, we obtain an asymptotic of the form $$c_1 B^{n-2} + c_2 B^{(n-1)/2}+O_{J,\epsilon, \omega}(B^{n/2+\epsilon-1}),$$ where odd $n \geq 5$ is the number of variables of the quadratic form. In the boundary case $n=3$, our asymptotic has the form  
\begin{align*}
   c_1' B \log B + c_2' B +O_{J,\epsilon, \omega}(B^{1/2+\epsilon}).
\end{align*}
These asymptotic formulas confirm the sharpness of Heath-Brown's estimate (see \cite{H-B},Theorems 5 and 8).   

To state our main theorem, we briefly introduce our objective counting function, $N(B)$, which is defined by \begin{align*}
    N(B):= N(B,\omega,J)= \sum_{x \in \mathbb{Z}^n} \omega\left(\frac{x}{B}\right)\delta_{F(x)},
    \end{align*}
where $J$ is the symmetric matrix associated to the quadratic form $F(x)$, and $\omega(x)$ is a smooth compactly supported function in $\mathbb{R}^n$.  We assume for simplicity that  $J$ is diagonal with entries $b_i=\pm 1$, where $i=1, \dots,n$, though our methods are completely general. 
In the definition of $N(B)$, the $\delta$-symbol plays a role in detecting zeros of $F(x)$, while $\omega(x)$, which can be think of a bump function, assures the zeros are restricted in a smoothed box of size $B$. 

We adapt the method of Heath-Brown to initiate and transform the zero-counting function $N(B)$ to a more familiar representation. Indeed, $N(B)$ can be written in terms of a sum of contour integrals: 
\begin{align*}
\sum_{c \in \mathbb{Z}^n} \int_{\mathrm{Re}(s)=\sigma_0}D(c,s)B^{s}g(c,s)\,ds, \tag{1.1} \label{eq1.1}
\end{align*}
where $\sigma_0$ is chosen appropriately so that the Dirichlet series $D(c,s)$ converges absolutely, and $g(c,s)$ is defined in terms of Fourier transform and Mellin transform as in (\ref{eq2.6}).

It turns out that the analytic continuation of $D(c,s)\,g(c,s)$ ultimately determines the shape of the asymptotic of $N(B)$. In other words, the further one can continue $D(c,s)$ analytically, viewed a function of $s$, the more accurate the asymptotic of interest is. In fact, Heath-Brown gave a meromorphic continuation of  $D(c,s)$ to the half plane $\mbox {Re}(s) > 1-n/2-\alpha$ for some $\alpha > 0$, which is good enough to obtain the asymptotics proven in \cite{H-B}. In order to acquire the secondary term, in \cite{G}, Getz extended the regime of meromorphic continuation of $D(c,s)$ to $\mbox {Re}(s) > -n/2$. Getz's arguments use the fact that $n$ is even, and it is not obvious how to extend the theorem proven in \cite{G} to the odd degree case. Motivating by the works of Health-Brown and Getz, I work classically to exploit Gauss character sums, find the Dirichlet characters that fit into the odd degree case, and also give an explicit expression of $D(c,s)$, at least outside the archimedian place, which is continued meromorphically to the regime of interest. Here is our main theorem:
\begin{main}
\label{thm1}
Let $\epsilon > 0$ and denote by $\square$ a perfect square. If $n \geq 5$ is odd,
\begin{align*}
    N(B)=c_1 B^{n-2} + c_2 B^{(n-1)/2}+O_{J,\epsilon, \omega}(B^{n/2+\epsilon-1}), \tag{1.2} \label{eq1.2}
\end{align*}
where
\begin{align*}
    c_1&=\mbox{\normalfont{Res}}_{\,s=-1}\,(D(0,s)\,g(0,s)),\\
    \mbox{ and } 
    c_2&=
   \sum_{\substack{c \, \in \, \mathds{Z}^n\\ c^t J^{-1} c \,=\,0}} \mbox{\normalfont{Res}}_{\,s=1/2-n/2}\,(D(c,s)\,g(c,s))+ \\& \quad + \sum_{\substack{0 \, \neq \, \, c \, \in \, \mathds{Z}^n\\ c^t J^{-1} c \, \neq \, 0 \\ i^{(n-1)^2/2}\,c^t J^{-1} c\,\det J\,  =\, \square}} \mbox{\normalfont{Res}}_{\,s=1/2-n/2}\,(D(c,s)\,g(c,s)). \tag{1.3} \label{eq1.3}
    \end{align*}
Moreover, if $n=3$,
\begin{align*}
    N(B)=c_1' B \log B + c_2' B +O_{J,\epsilon, \omega}(B^{1/2+\epsilon}), \tag{1.4} \label{eq1.4}
\end{align*}
where 
\begin{align*}
    c_1'&=\frac{-4}{\pi^4} \int_{\mathbb{R}} Z( z,1) \, dz, \tag{1.5} \label{eq1.5} \\
    \mbox{ and } 
    c_2'&=
    \sum_{\substack{c \, \in \, \mathbb{Z}^n\\ c^t J^{-1} c\, =\,0}} \mbox{\normalfont{Res}}_{\,s=-1}\,(D(c,s)\,g(c,s))+\sum_{\substack{0 \, \neq \, \, c \, \in \, \mathbb{Z}^n\\ c^t J^{-1} c \, \neq \, 0 \\ -c^t J^{-1} c\,\det J\,  =\, \square}} \mbox{\normalfont{Res}}_{\,s=-1}\,(D(c,s)\,g(c,s)). \tag{1.6} \label{eq1.6}
\end{align*}
\end{main}
\begin{rem}
Before we dive into our investigation, we want to make some comments here. 
\begin{itemize}
    \item The function $Z( z,1)$ in (\ref{eq1.5}) is defined as (\ref{eq5.1}) in Section \ref{Sec 5}.
    \item Our asymptotic formulas (\ref{eq1.2}) and (\ref{eq1.4}) are fundamentally consistent with those of Getz's paper \cite{G}. However, it is notable that the secondary terms (\ref{eq1.3}) and (\ref{eq1.6}) involve the solutions to a quadratic form in $n+1$ variables, $i^{(n-1)^2/2}\,c^t J^{-1} c\,\det J  = \square$, not just a quadratic form in $n$ variables, $c^t J^{-1} c =0$.  This is in marked contrast to the corresponding results in \cite{G}.
    \item In \cite{G}, the secondary term is given in a less explicit manner.  In this paper, we work classically and give an explicit description of the secondary term, at least outside the infinite place, which sheds light on the questions of whether the secondary term is identically to zero and whether it is positive or negative in the odd degree case of quadratic forms. This information will be useful in applying the theorem. 
    \item The $D(c,s)\,g(c,s)$, viewed as a function of $s$, admits the meromorphic continuation to $\mbox{Re}(s) > -n/2$ with possible simple poles at $s=0, -1,$ and $ 1/2-n/2.$ For application, it may be necessary to know the existence conditions of these poles. We presented these results in Remark \ref{Remark} and Appendix \ref{appendix:Appendix}.
\end{itemize}
\end{rem}
We postpone the proof of our main theorem until the last section, Section \ref{Sec 6}, of this paper. We address the double-summation expression of $\delta$-symbol and formulate $N(B)$ to its contour integral representation (\ref{eq1.1}) in Section \ref{Sec 2}. We then study exponential double summations, namely $S(q,c)$ as defined in (\ref{eq2.3}), which are the coefficients of the Dirichlet series $D(c,s)$, and subsequently analyze $D(c,s)$ in Sections \ref{Sec 3} and \ref{Sec 4}. These two sections are the most technical parts of the paper. Using Getz's results, we briefly treat $g(c,s)$ in Section \ref{Sec 5}. We entirely work and use classically analytic language throughout the paper.


\section{Preliminary steps}
\label{Sec 2}
We start our investigation by adapting a double-summation expression of the $\delta$-symbol, which was essentially due to Duke, Friedlander, and Iwaniec (see \cite{DFI}, Section 2), and then later by Health-Brown (see \cite{H-B}, Theorem 1). In contrast to Health-Brown, we keep the $\delta$-symbol in terms of non-Ramanujan sum.
To ease notation, we denote $e(x)$ by $e^{2 \pi i x}$, and $c_{p,j,J}:=(c/p^j)^t J^{-1} (c/p^j)$, where $t$ means transpose. We use the subscript $L$ to denote both the Legendre symbol and its extension, the Jacobi symbol, while the subscript $K$ stands for the Kronecker symbol. Also, the notation $\, \mathds{1} _S$ is $1$ if the statement $S$ is true and $0$ otherwise.


\begin{prop}
\label{Prop 2.1}
Assume $\Phi(x,y) \in \mathcal{S}(\mathbb{R}^2)$ with $\Phi(x,0)=0$ for all $x$ and $\int_{\mathbb{R}}\Phi(0,y) \, dy=1$. For any integer $n$ and any real number $Q > 1$, there is a positive constant $c_Q$ such that
\begin{align}
    \delta_n=c_Q Q^{-1} \sum_{q=1}^{\infty} \frac{1}{q} \sum_{a (\mbox{\tiny{mod} }  q)} e\left(\frac{a n}{q}\right) h\left( \frac{n}{q\,Q}, \frac{q}{Q}\right), \tag{2.1} \label{eq2.1}
\end{align}
where $c_Q=1+O_N(Q^{-N})$ for any positive integer $N$, and 
$\displaystyle{h(x,y)= \Phi(x,y) - \Phi(y,x)}$. 
\end{prop}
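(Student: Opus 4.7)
The plan is to compute the right-hand side of (\ref{eq2.1}) directly and verify that it collapses to $\delta_n$, using orthogonality of additive characters together with the antisymmetry built into $h$. First I would apply the identity
$$\sum_{a\,(\mathrm{mod}\,q)} e\!\left(\frac{an}{q}\right) = q\,\mathbbm{1}_{q\mid n}$$
to the inner sum, reducing the right-hand side of (\ref{eq2.1}) to
$$c_Q\, Q^{-1} \sum_{\substack{q \geq 1 \\ q \mid n}} h\!\left(\frac{n}{qQ},\frac{q}{Q}\right),$$
after which the analysis splits according to whether $n$ is zero or not.

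For $n \neq 0$, the strategy is to exploit the antisymmetry $h(x,y) = -h(y,x)$ together with the involution $q \leftrightarrow n/q$ on the positive divisors of $n$. Setting $m = n/q$ and rewriting the sum as $\sum_{qm = n,\, q,m \geq 1}\!\bigl[\Phi(m/Q,q/Q) - \Phi(q/Q,m/Q)\bigr]$, the swap $q \leftrightarrow m$ permutes the index set and interchanges the two terms, forcing the whole sum to vanish and yielding $\delta_n = 0$. For $n < 0$ the argument requires an implicit support hypothesis on $\Phi$ (so that every summand with a nonpositive argument already vanishes); combined with the assumption $\Phi(x,0) = 0$ this disposes of the negative case.

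For $n = 0$, every $q \geq 1$ contributes, and the vanishing $\Phi(x,0) = 0$ simplifies $h(0, q/Q)$ to $\Phi(0, q/Q)$. The right-hand side then reads $c_Q\, Q^{-1} \sum_{q \geq 1} \Phi(0, q/Q)$, and this equation uniquely determines $c_Q$. To obtain the claimed $c_Q = 1 + O_N(Q^{-N})$, I would compare the Riemann sum $Q^{-1} \sum_{q \geq 1} \Phi(0, q/Q)$ to the integral $\int \Phi(0,y)\,dy = 1$ via Poisson summation applied to $y \mapsto \Phi(0, y)$: the zero-frequency Fourier coefficient recovers the integral, while the nonzero frequencies produce decay in $Q$ faster than any polynomial by the Schwartz property of $\Phi$.

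The main technical obstacle is this last step: showing that the Riemann-sum-to-integral discrepancy is $O_N(Q^{-N})$ for every $N$, uniformly in terms depending only on a few Schwartz seminorms of $\Phi$. The algebraic cancellation for $n \neq 0$ is essentially formal once the divisor symmetry is recognized, whereas the approximation of $c_Q$ genuinely requires a Fourier-analytic input, together with carefully invoking the support and normalization conventions on $\Phi$ that make the one-sided Riemann sum align with the full real-line integral.
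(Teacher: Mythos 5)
Your proposal is correct and follows essentially the same route as the paper: orthogonality of additive characters reduces the right-hand side to a sum over divisors, the involution $q \mapsto n/q$ kills the $n \neq 0$ case by the antisymmetry of $h$, and Poisson summation applied to $\Phi(0,\cdot)$ produces $c_Q = 1 + O_N(Q^{-N})$. The two subtleties you flag --- the $n<0$ case and the one-sided ($q\geq 1$) versus full-line Riemann sum --- are precisely the points the paper's own proof glosses over by tacitly working with $q \in \mathbb{Z}$.
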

\begin{proof}
Since $\frac{n}{q}$ runs over all divisors of $n$ as $q$ does, we have
\begin{align*}
                \sum_{q|n} \left[\Phi\left(\frac{n}{q\, Q},\frac{q}{Q}\right)-\Phi\left(\frac{q}{Q},\frac{n}{q\,Q}\right)\right]=      \sum_{q \in \mathbb{Z}}\Phi\left(0, \frac{q}{Q} \right) \, \mathds{1} _{n=0} .           \end{align*}
Since $\Phi$ is Schwartz, Poisson summation yields
\begin{align*}
  \sum_{q \in \mathbb{Z}}\Phi\left(0, \frac{q}{Q} \right) &=  \sum_{q \in \mathbb{Z}} \int_{\mathbb{R}} \Phi\left(0, \frac{x}{Q}  \right) e(-xq) \, dx.
\end{align*}
By integrating by parts and changing variables, the inner integral is equal to $Q$ when $q=0$ and $O_N(Q (Q|q|)^{-N})$ for all $q \, \neq \, 0.$
Then, we set $c_Q:=1+O_N(Q^{-N})$ and get
\begin{align*}
  \sum_{q \in \mathbb{Z}}\Phi\left(0, \frac{q}{Q} \right) &=  c_Q^{-1}Q,
\end{align*}
which implies
\begin{align*}
    \delta_n= c_Q Q^{-1}\sum_{q|n} \left[\Phi\left(\frac{n}{q\, Q},\frac{q}{Q}\right)-\Phi\left(\frac{q}{Q},\frac{n}{q\,Q}\right)\right] =  c_Q Q^{-1}\sum_{q|n} h\left( \frac{n}{q\,Q}, \frac{q}{Q}\right),
\end{align*}
where $\displaystyle{h(x,y)= \Phi(x,y) - \Phi(y,x)}$.
Thus, the proposition follows by applying orthogonality of additive characters (see \cite{IK}, Chapter 3) to detect divisors $q$ of $n$. 
\end{proof}
Our smoothed counting function is defined by $$N(B)= \sum_{x \in \mathbb{Z}^n} \omega\left(\frac{x}{B}\right)\delta_{F(x)}.$$
We apply Proposition \ref{Prop 2.1} and write $x=x_0+qy$ to get
\begin{align*}
N(B)
&= c_Q Q^{-1} \sum_{q=1}^{\infty} \frac{1}{q} \sum_{a (\mbox{\tiny{mod} }  q)} \sum_{x \in \mathbb{Z}^n} \omega\left(\frac{x}{B}\right) e\left(\frac{a F(x)}{q}\right) h\left( \frac{F(x)}{q\,Q}, \frac{q}{Q}\right)\\
&= c_Q  Q^{-1}  \sum_{q=1}^{\infty} \frac{1}{q} \sum_{a (\mbox{\tiny{mod} }  q)}\sum_{x_0 \in {(\mathbb{Z}/q\mathbb{Z})}^n}  e\left(\frac{a F(x_0)}{q}\right)\sum_{y \in \mathbb{Z}^n}\omega\left(\frac{x_0+qy}{B}\right) h\left( \frac{F(x_0+qy)}{q\,Q}, \frac{q}{Q}\right). 
\end{align*}
Then, the $n$-dimensional Poisson summation applied to $\omega\left(\frac{x_0+qy}{B}\right) h\left( \frac{F(x_0+qy)}{q\,Q}, \frac{q}{Q}\right)$, which is Schwartz as a function of $y$, yields
\begin{align*}
N(B)
&= c_Q  Q^{-1} \sum_{c \in \mathbb{Z}^n} \sum_{q=1}^{\infty}q^{-n-1} \, S(q,c) \,\int_{\mathbb{R}^n} \omega\left(\frac{x}{B}\right) h\left( \frac{F(x)}{q\,Q}, \frac{q}{Q}\right) e\left(-\frac{c\cdot x}{q}\right) \,dx, \tag{2.2} \label{eq2.2}
\end{align*}
where \begin{align*}
    S(q,c)=\sum_{a (\mbox{\tiny{mod} }  q)} \sum_{x_0 \in {(\mathbb{Z}/q\mathbb{Z})}^n}  e\left(\frac{a F(x_0)+c\cdot x_0}{q}\right). \tag{2.3} \label{eq2.3}
\end{align*}
Here, we interchange summations in (\ref{eq2.2}). It is permissible in the Schwartz space in which our test functions lie.
We end this section by representing $N(B)$ as a sum of contour integrals:
\begin{prop}
\label{Prop 2.2}
\begin{align*}
N(B)&= \frac{c_B  B^{n-1}}{2 \pi i} \sum_{c \in \mathbb{Z}^n} \int_{\mathrm{Re}(s)=\sigma_0}D(c,s)B^{s}g(c,s)\,ds, \tag{2.4} \label{eq2.4}
\end{align*}
where 
\begin{align*}
    D(c,s)=\sum_{q=1}^{\infty} \frac{S(q,c)}{q^{s+n+1}}, \tag{2.5} \label{eq2.5}
    \end{align*}
    \begin{align*}
 g(c,s)=  \int_{0}^\infty \int_{\mathbb{R}^n} \omega(x)\,h\left(\frac{F(x)}{y},y\right)e\left(-\frac{c\cdot x}{y}\right)  \,dx \, y^s \, \frac{dy}{y}, \tag{2.6} \label{eq2.6}
\end{align*}
with  $S(q,c)$ is defined in (\ref{eq2.3}), and $\sigma_0$ is chosen large enough so that the Dirichlet series $D(c,s)$ converges absolutely.
\end{prop}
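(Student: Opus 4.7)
The plan is to derive (2.4) as a repackaging of equation (2.2) with the choice $Q = B$, followed by a rescaling and Mellin inversion. Starting from (2.2) with $Q = B$, the overall prefactor is $c_B B^{-1}$ and the inner integral involves $\omega(x/B)$, $h(F(x)/(qB),\, q/B)$, and $e(-c\cdot x/q)$. I would rescale by $x \mapsto Bx$, which produces a Jacobian $B^n$. Because $F$ is homogeneous of degree two, $F(Bx)/(qB) = F(x)/(q/B)$, and the exponential argument becomes $-c\cdot x/(q/B)$. Setting $y = q/B$, the rescaled inner integral is exactly
\begin{align*}
G_c(y) := \int_{\mathbb{R}^n} \omega(x)\, h\bigl(F(x)/y,\, y\bigr)\, e(-c\cdot x/y)\, dx,
\end{align*}
so that
\begin{align*}
N(B) = c_B B^{n-1} \sum_{c \in \mathbb{Z}^n} \sum_{q=1}^\infty q^{-n-1} S(q,c)\, G_c(q/B).
\end{align*}

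The second step is Mellin inversion. The function $G_c$ is smooth in $y>0$ and decays rapidly as $y\to 0^+$ and $y\to\infty$, essentially because $\omega$ is compactly supported and $h$ inherits Schwartz decay together with the vanishing $\Phi(\cdot,0)=0$. Its Mellin transform is therefore the $g(c,s)$ of (2.6), and for any $\sigma_0$ in the common strip of definition,
\begin{align*}
G_c(q/B) = \frac{1}{2\pi i}\int_{\mathrm{Re}(s)=\sigma_0} g(c,s)\, (q/B)^{-s}\, ds.
\end{align*}
Substituting this and formally interchanging the sum over $q$ with the contour integral packages the $q$-dependence into $\sum_q S(q,c)/q^{s+n+1}=D(c,s)$, which yields the claimed expression (2.4).

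The main obstacle is justifying the interchange of sum and integral rigorously. I would choose $\sigma_0$ large enough that $D(c,s)$ converges absolutely on $\mathrm{Re}(s)=\sigma_0$ using the crude trivial bound on $S(q,c)$ read off from (2.3), and establish rapid decay of $g(c,s)$ in $|\mathrm{Im}(s)|$ via repeated integration by parts in $y$ — a standard consequence of the smoothness and decay of $G_c$. These two estimates combine to justify Fubini on the joint sum-integral in $q$. A parallel integration by parts in the $x$ variable against the oscillatory kernel $e(-c\cdot x/y)$ produces polynomial decay of $g(c,s)$ in $|c|$, which is enough to control the outer sum over $c\in\mathbb{Z}^n$ (this is the same Schwartz-space justification already invoked for the interchange leading to (\ref{eq2.2})). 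Once these estimates are in hand, the interchange is legitimate and Proposition \ref{Prop 2.2} follows.
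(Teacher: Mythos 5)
Your proposal is correct and follows essentially the same route as the paper: both arguments rest on the trivial bound $|S(q,c)|\le q^{n+1}$ to make $D(c,s)$ absolutely convergent for $\sigma_0>1$, Mellin inversion in the $y$-variable, and the choice $Q=B$ together with the rescaling $(x,y)\mapsto(Bx,By)$ and homogeneity of $F$ to extract $B^{s+n}$. The only difference is the (purely cosmetic) order of operations — you rescale before Mellin-inverting, the paper inverts first and then rescales inside the transform — and your added remarks on justifying the interchange are consistent with the paper's brief appeal to the Schwartz class.
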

\begin{proof} By using trivial bound $q^{n+1}$ of $S(q,c)$, it is clear that the Dirichlet series $D(c,s)$ converges absolutely as long as  $\sigma_0 > 1$. Together with Mellin inversion, we then have 
\begin{align*}
    &\frac{1}{2\pi i}\int_{\mathrm{Re}(s)=\sigma}D(c,s)\,\int_{0}^\infty \int_{\mathbb{R}^n} \omega\left(\frac{x}{B}\right) h\left( \frac{F(x)}{yQ}, \frac{y}{Q}\right) e\left(-\frac{c\cdot x}{y}\right) \,dx\,y^s\frac{dy}{y} \,ds \tag{2.7} \label{eq2.7} \\
    &= \sum_{q=1}^\infty q^{-n-1}   S(q,c)  \frac{1}{2\pi i}\int_{\mathrm{Re}(s)=\sigma} \,\int_{0}^\infty \int_{\mathbb{R}^n} \omega\left(\frac{x}{B}\right) h\left( \frac{F(x)}{yQ}, \frac{y}{Q}\right) e\left(-\frac{c\cdot x}{y}\right) \,dx\,y^s\frac{dy}{y} \, q^{-s} \,ds \\
    &= \sum_{q=1}^{\infty}q^{-n-1} \, S(q,c) \,\int_{\mathbb{R}^n} \omega\left(\frac{x}{B}\right) h\left( \frac{F(x)}{q\,Q}, \frac{q}{Q}\right) e\left(-\frac{c\cdot x}{q}\right) \,dx,
    \end{align*}
which is the inner sum of $N(B)$ in (\ref{eq2.2}).
We also make our choice of $Q=B$. By changing variables $(x,y) \mapsto (Bx,By)$ and utilizing homogeneous property of quadratic form, we can extract $B$ out of the two inner integrals in (\ref{eq2.7}) and obtain
\begin{align*}
    B^{s+n} \,\int_{0}^\infty \int_{\mathbb{R}^n} \omega(x)\,h\left(\frac{F(x)}{y},y\right)e\left(-\frac{c\cdot x}{y}\right)  \,dx \, y^s \, \frac{dy}{y}.
\end{align*}
Hence, we have proved the proposition.
\end{proof}

\section{Exponential sum}
\label{Sec 3}
In order to analyze the Dirichlet series $D(c,s)$ (\ref{eq2.5}) as defined in Section \ref{Sec 2}, we devote this section on studying its coefficients $S(q,c)$, which are exponential double sums given by
$$S(q,c)=\sum_{a (\mbox{\tiny{mod} }  q)} \sum_{x_0 \in {(\mathbb{Z}/q\mathbb{Z})}^n}  e\left(\frac{a F(x_0)+c\cdot x_0}{q}\right)  . $$
By the mean of Chinese remainder theorem, $S(q,c)$ is  multiplicative as a function of $q$ (see also \cite{H-B}, page 45).
In other words,
\begin{align*}
    S(q,c)=\prod_{p^k\,||\,q} S(p^{k},c)
    &=\prod_{p^k\,||\,q} \sum_{a (\mbox{\tiny{mod} }  p^{k})} \sum_{x_0 \in {(\mathbb{Z}/p^{k}\mathbb{Z})}^n}  e\left(\frac{a F(x_0)+c\cdot x_0}{p^{k}}\right) \\
    &= \prod_{p^k\,||\,q} \sum_{a (\mbox{\tiny{mod} }  p^{k})} p^{n k/2} \,G_{k}(2aJ,c) , \tag{3.1} \label{eq31}
  \end{align*}
  where $G_k(A,v)=p^{-nk/2}\sum_{x \in (\mathbb{Z}/p^k\mathbb{Z})^n} e^{( \pi i x^t A x/p^k)}e^{( 2 \pi i v \cdot x/p^k )}$ is an $n$-dimensional Gauss sum associated with a $n \times n$ symmetric matrix $A$, denoted by $A \in S_{n \times n} $, and $n$-dimensional vector $v$, and also write $G_k(A):=G_k(A,0).$
  
In \cite{DF}, Dabrowski and Fisher state properties of $G_k(A,v)$ in Lemma \ref{Lem 3.1}, which can be verified by elementary linear algebra. 
\begin{lem}
\label{Lem 3.1}
The $n$-dimensional Gauss sum $G_k(A,v)$, where $A \in S_{n \times n}(\mathbb{Z}_p)$ and $v \in \mathbb{Z}_p^n$, satisfies
\begin{align*}
   \qquad \qquad \qquad G_k(A,v)&=G_k(P^tAP,P^tv), \mbox { where } P \in \mbox{GL}_n(\mathbb{Z}_p), \\
    \mbox { and } G_k(A)&=G_k(A_1)\,G_k(A_2) \mbox { if } A=
\begin{bmatrix}
A_1 &  \\
 & A_2
\end{bmatrix}
 \mbox { is block-diagonal.}  \hspace{.85 in} \square
\end{align*}
\end{lem}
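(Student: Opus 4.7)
\medskip

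The plan is to verify both claims by elementary changes of variable in the defining sum
\[
G_k(A,v) = p^{-nk/2}\sum_{x \in (\mathbb{Z}/p^k\mathbb{Z})^n} e\!\left(\tfrac{x^t A x}{2 p^k}\right)\,e\!\left(\tfrac{v\cdot x}{p^k}\right),
\]
neither of which requires anything beyond basic linear algebra, as the authors indicate.

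For the first identity, I would substitute $x = Py$. Since $P \in \mathrm{GL}_n(\mathbb{Z}_p)$, its reduction modulo $p^k$ lies in $\mathrm{GL}_n(\mathbb{Z}/p^k\mathbb{Z})$, so the map $y \mapsto Py$ is a bijection of $(\mathbb{Z}/p^k\mathbb{Z})^n$ onto itself. Under this substitution the quadratic part becomes $x^t A x = y^t (P^t A P) y$ and the linear part becomes $v \cdot x = v^t P y = (P^t v)^t y$. Reindexing the sum by $y$ therefore gives $G_k(A,v) = G_k(P^t A P, P^t v)$, with the normalizing factor $p^{-nk/2}$ unchanged since $n$ is unchanged.

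For the second identity, writing $x = (x_1, x_2)$ according to the block sizes of $A_1$ (an $n_1 \times n_1$ block) and $A_2$ (an $n_2 \times n_2$ block) with $n_1 + n_2 = n$, the block-diagonal form of $A$ gives $x^t A x = x_1^t A_1 x_1 + x_2^t A_2 x_2$. Since $v = 0$, the exponential factors as a product indexed by $x_1$ and $x_2$ independently, and the sum over $(\mathbb{Z}/p^k\mathbb{Z})^n$ factors as the product of the sums over $(\mathbb{Z}/p^k\mathbb{Z})^{n_1}$ and $(\mathbb{Z}/p^k\mathbb{Z})^{n_2}$. The normalizing exponent splits as $-nk/2 = -n_1 k/2 - n_2 k/2$, yielding $G_k(A) = G_k(A_1)\,G_k(A_2)$.

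There is no genuine obstacle here; the only mild subtlety is bookkeeping with the half-integer normalization $p^{-nk/2}$, which must be consistently matched to the dimension in each step, and the observation that $P \in \mathrm{GL}_n(\mathbb{Z}_p)$ really does act as an automorphism of $(\mathbb{Z}/p^k\mathbb{Z})^n$ (because $\det P \in \mathbb{Z}_p^\times$ reduces to a unit mod $p^k$). Both points are routine, which is why the statement is flagged as a direct consequence of elementary linear algebra.
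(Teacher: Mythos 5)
Your proof is correct and takes exactly the route the paper intends: the paper states this lemma without proof (citing Dabrowski--Fisher and noting it follows from elementary linear algebra), and your change of variables $x=Py$ for the first identity and the block factorization of the sum (together with the splitting $p^{-nk/2}=p^{-n_1k/2}p^{-n_2k/2}$) for the second are precisely the routine verifications being alluded to. The one point worth noting is that you correctly justify why $y\mapsto Py$ is a bijection of $(\mathbb{Z}/p^k\mathbb{Z})^n$ via $\det P\in\mathbb{Z}_p^\times$, which is the only step that is not purely formal.
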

In fact, the second property in Lemma \ref{Lem 3.1} also holds for $G_k(A,v)$, i.e., $G_k(A,v)=G_k(A_1,v_1)\,G_k(A_2,v_2)$. Moreover, Dabrowski and Fisher give a proof of Proposition \ref{Prop 3.2}, but we were unable to follow their argument. We give an alternative proof for the convenience of the reader.
\begin{prop}
\label{Prop 3.2}
Let $A \in S_{n \times n}(\mathbb{Z}_p)$ and $v \in \mathbb{Z}_p^n$. For odd prime $p$,
$$G_k(A,v) \, \neq \, 0 \mbox { if and only if } v=Au \mbox { for some } u \in \mathbb{Z}_p^n.$$ 
Moreover, if $v=Au$, then $\displaystyle{
    G_k(A,v)
    = e^{( - \pi i u^t A u/p^k )}G_k(A) }$.
\end{prop}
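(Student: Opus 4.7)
My plan is to reduce to the one-dimensional case via diagonalization and then to complete the square, using that $p$ is odd at two decisive points: first to diagonalize a symmetric bilinear form over $\mathbb{Z}_p$, and second to invert $2$ modulo $p^k$. The standard structure theory of symmetric forms over $\mathbb{Z}_p$ for odd $p$ provides $P \in \GL_n(\mathbb{Z}_p)$ with $D := P^tAP = \mathrm{diag}(d_1,\dots,d_n)$. By Lemma \ref{Lem 3.1} together with its analogue for $G_k(\cdot,\cdot)$ (remarked upon just before the proposition),
\begin{align*}
G_k(A,v) \;=\; G_k(D,\,P^tv) \;=\; \prod_{i=1}^n G_k(d_i, w_i),
\end{align*}
where $(w_1,\dots,w_n) := P^tv$. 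Under the substitution $u' = P^{-1}u$ the condition $v=Au$ becomes $w_i = d_iu'_i$ for all $i$, and $u^tAu = \sum_i d_i(u'_i)^2$. Both the non-vanishing criterion and the claimed phase are therefore multiplicative in $i$, so it suffices to establish the one-variable assertion: $G_k(d,w) \neq 0$ if and only if $w \in d\,\mathbb{Z}_p$, and in that case $G_k(d,w) = e^{-\pi i\,du_0^2/p^k}\,G_k(d,0)$ for any $u_0$ with $du_0 = w$.

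For the one-dimensional sum, write $d = p^j\tilde d$ with either $\tilde d \in \mathbb{Z}_p^\times$ or $d \equiv 0 \pmod{p^k}$; the latter case is immediate from orthogonality of additive characters on $\mathbb{Z}/p^k\mathbb{Z}$. Otherwise, for $0 \le j < k$, decompose $x \in \mathbb{Z}/p^k\mathbb{Z}$ as $x = x_0 + p^{k-j}y$ with $x_0$ ranging over $\mathbb{Z}/p^{k-j}\mathbb{Z}$ and $y$ over $\mathbb{Z}/p^j\mathbb{Z}$. Since $dx^2$ depends on $x$ only modulo $p^{k-j}$, the $y$-summation factors out as $\sum_y e(wy/p^j)$, which vanishes unless $p^j \mid w$, i.e.\ unless $w \in d\,\mathbb{Z}_p$. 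When $w = p^jw'$, the remaining $x_0$-sum involves $\tilde d$, a unit modulo $p^{k-j}$; because $2$ is also a unit (as $p$ is odd), one completes the square
\begin{align*}
\tilde d\,\bar 2\, x_0^2 + w'x_0 \;=\; \tilde d\,\bar 2\,\bigl(x_0 + w'\tilde d^{-1}\bigr)^2 - \tfrac{1}{2}(w')^2 \tilde d^{-1},
\end{align*}
and a translation of $x_0$ identifies the residual sum with $G_k(d,0)$, extracting the prefactor $e\bigl(-\tfrac{1}{2}(w')^2\tilde d^{-1}/p^{k-j}\bigr)$.

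A direct check identifies this prefactor with $e^{-\pi i\,du_0^2/p^k}$ for $u_0 := w'\tilde d^{-1}$, the unique solution of $du_0 = w$ modulo $p^{k-j}$: indeed $du_0^2 = p^j\tilde d(w'\tilde d^{-1})^2 = p^j(w')^2\tilde d^{-1}$, so $\pi i\,du_0^2/p^k = \pi i(w')^2\tilde d^{-1}/p^{k-j}$, which matches the phase above. Multiplying the one-dimensional formulas over $i$ recovers the claimed identity $G_k(A,v) = e^{-\pi i\,u^tAu/p^k}G_k(A)$. The main obstacle is the bookkeeping in the case $j \ge 1$: correctly tracking the reduction of modulus from $p^k$ to $p^{k-j}$, matching the divisibility $p^j \mid w$ with the image condition $w \in d\,\mathbb{Z}_p$, and reconciling the phase produced by completing the square with the intrinsic exponent $e^{-\pi i\,du_0^2/p^k}$.
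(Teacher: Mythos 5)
Your argument is correct and follows essentially the same route as the paper: both proofs diagonalize $A$ over $\mathbb{Z}_p$ (using that $p$ is odd), reduce to one-dimensional sums, and detect the vanishing via the translation $x\mapsto x+p^{k-j}y$, while the phase $e^{-\pi i\,u^tAu/p^k}$ comes from a shift of the summation variable. The only cosmetic difference is that the paper extracts this phase in one stroke from the $n$-dimensional identity $G_k(A,v)=e^{\pi i y^tAy/p^k}e^{2\pi i v\cdot y/p^k}G_k(A,v+Ay)$ with $y=-u$ (no diagonalization needed for that direction), whereas you complete the square coordinate-by-coordinate after diagonalizing and then verify that the one-dimensional phases multiply to $e^{-\pi i\,u^tAu/p^k}$.
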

\begin{proof}
By replacing $x$ by $x+y$, where $ y \in (\mathbb{Z}/p^k\mathbb{Z})^n$, and noting that 
\begin{align*}
    (x+y)^t A (x+y) =x^tAx+2Ay\cdot x+y^tAy,
\end{align*}
we get $G_k(A,v)
        = e^{( \pi i y^t A y/p^k)}e^{( 2 \pi i v \cdot y/p^k )}G_k(A,v+Ay)$.
If $v=Au$, we choose $y=-u$ and obtain
\begin{align*}
    G_k(A,v)&= e^{( \pi i u^t A u/p^k)}e^{( -2 \pi i u^t A u/p^k )}\,G_k(A) \\
    &= e^{( - \pi i u^t A u/p^k )}\,G_k(A) \, \neq \, 0. 
  \end{align*} On the other hand, as prime $p$ is odd, there exists $P \in \mbox{GL}_n(\mathbb{Z}_p) $ such that $P^t A P$ is diagonal (see \cite{DF}, Remark 1.14) with entries $u_1p^{a_1}, u_2p^{a_2}, \ldots, u_np^{a_n}$, where $p \nmid u_i$ for all $i=1, \ldots, n$. By Lemma \ref{Lem 3.1} and the fact that $v \in \mathbb{Z}_p^n$, it suffices to consider the one-dimensional Gauss sum and to show that for each $i$, $$\,G_k(u_i p^{a_i},v_i)
=0 \mbox { whenever } p^{a_i} \nmid v_i.$$ 
Indeed, the sum is invariant by changing $x_i$ to $x_i+yp^{k-a_i}$, for some non-zero $y \in \mathbb{Z}/p^{a_i}\mathbb{Z}$. That is,
\begin{align*}
    G_k(u_ip^{a_i},v_i) &= p^{-nk/2}\sum_{x_i \in \mathbb{Z}/p^k\mathbb{Z}} e^{( \pi i x_i^2 u_i p^{a_i} /p^k)} e^{( \pi i y^2 u_i p^{k-a_i})}e^{( 2 \pi i v_i x_i/p^k )} e^{(2 \pi i v_i p^{-a_i} y)} \\
    &= e^{ \pi i y^2 u_i p^{k-a_i}} e^{2 \pi i v_i u_i p^{-a_i} y} \,G_k(u_i p^{a_i},v_i),
\end{align*}
which implies $G_k(u_i p^{a_i},v_i)=0$ whenever $p^{a_i} \nmid v_i$.
\end{proof}
In view of Proposition \ref{Prop 3.2}, the multiplicative property of $S(q,c)$ regarded as a function of $q$, and the sake of simplicity, we now assume the symmetric matrix $J$ associated to the quadratic form $F(x)$ is diagonal with entries $b_i=\pm1$, where $i=1, \ldots, n$, which implies that Proposition \ref{Prop 3.2} applied to $G_k(2aJ,c)$ in (\ref{eq31}) is valid for all primes and $J^{-1}=J$. As discussed in Section \ref{Sec 2}, the expansion of the $\delta$-symbol (\ref{eq2.1}) we use is not in the form of Ramanujan sum with respect to $a$. To facilitate our upcoming computation and to take quadratic Gauss sums into account, we turn $S(p^{k},c)$ into a sum of Ramanujan-type sums in Proposition \ref{Prop 3.3}:
\begin{prop}
   \label{Prop 3.3}
   \begin{align*}
       S(p^{k},c)= \sum_{j=0}^{k}\sum_{a (\mbox{\tiny{mod }} p^{k-j})}^* \sum_{x_0 \in {(\mathbb{Z}/p^{k}\mathbb{Z})}^n}  e\left(\frac{a F(x_0)+\frac{c}{p^j}\cdot x_0}{p^{k-j}} \right) \, \mathds{1} _{p^j|c},
   \end{align*}
   where the $*$ above the summation indicates $(a,p^{k-j})=1$.
\end{prop}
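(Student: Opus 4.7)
The plan is to stratify the outer sum over $a \pmod{p^k}$ according to the $p$-adic valuation of $a$. I would first write each residue uniquely as $a = p^j a'$ with $0 \le j \le k$, where for $j<k$ we have $a' \in (\mathbb{Z}/p^{k-j}\mathbb{Z})^\times$, and for $j=k$ only $a'=0$ occurs (consistent with the convention that $(a',1)=1$ is vacuous). This partitions the $a$-sum and gives
\begin{align*}
S(p^k,c) \;=\; \sum_{j=0}^{k}\,\sum_{a'(\mbox{\tiny mod }p^{k-j})}^{*}\,\sum_{x_0 \in (\mathbb{Z}/p^k\mathbb{Z})^n} e\!\left(\frac{a' F(x_0)}{p^{k-j}}\right) e\!\left(\frac{c \cdot x_0}{p^k}\right).
\end{align*}

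Next I would show that, for each fixed pair $(j,a')$, the innermost sum over $x_0$ vanishes unless $p^j \mid c$ (component-wise). The key step is a shift of variables: for any $z \in \mathbb{Z}^n$, the map $x_0 \mapsto x_0 + p^{k-j}z$ is a bijection on $(\mathbb{Z}/p^k\mathbb{Z})^n$. Using the expansion
\begin{align*}
F(x_0 + p^{k-j} z) \;=\; F(x_0) + 2 p^{k-j} z^t J x_0 + p^{2(k-j)} F(z),
\end{align*}
the cross term $2 a' z^t J x_0$ and the final term $a' p^{k-j} F(z)$ are integers (because $J$ is integral and $k \ge j$), so their exponentials are trivial, and the quadratic phase is unchanged under the shift. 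The linear phase, however, picks up the factor $e(c \cdot z / p^j)$. Invariance under the bijection then forces $e(c\cdot z/p^j) = 1$ for every $z \in \mathbb{Z}^n$, which is precisely the condition $p^j \mid c$.

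Finally, once $p^j \mid c$, I would use the identity $c\cdot x_0/p^k = (c/p^j)\cdot x_0/p^{k-j}$ to merge the two exponentials into $e\bigl((a' F(x_0) + (c/p^j)\cdot x_0)/p^{k-j}\bigr)$. Inserting the indicator $\mathds{1}_{p^j \mid c}$ (harmless because the inner sum already vanishes otherwise) and relabeling $a'$ as $a$ yields exactly the claimed formula.

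The only substantive step is the shift argument producing the divisibility; the main point to be careful about is verifying that the extra terms in the expansion of $F(x_0 + p^{k-j}z)$ really contribute trivially to the exponential, which is immediate from $J$ being integral and $k - j \ge 0$. Notably the argument does not rely on the assumption that $J$ is diagonal, so the proposition holds in the full generality of integral symmetric $J$.
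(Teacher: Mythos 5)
Your proof is correct and follows essentially the same route as the paper: both stratify the $a$-sum by the $p$-adic valuation of $a$ (writing $a=p^j a'$ with $a'$ a unit modulo $p^{k-j}$) and then use the shift $x_0 \mapsto x_0 + p^{k-j}z$ to show the inner sum vanishes unless $p^j \mid c$. Your remark that only integrality of $J$, not diagonality, is needed is accurate and makes explicit something the paper leaves implicit.
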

\begin{proof}
   We observe 
   \begin{align*}
       \mathbb{Z}/p^{k} = (\mathbb{Z}/p^{k})^{\times} \cup p(\mathbb{Z}/p^{k-1})^{\times} \cup \ldots \cup p^j(\mathbb{Z}/p^{k-j})^{\times} \cup \ldots \cup p^{k-1}(\mathbb{Z}/p)^{\times}\cup 0,
   \end{align*}
     and then rewrite $S(p^{k},c)$ as
     \begin{align*}
         S(p^{k},c)= \sum_{j=0}^{k}\sum_{a (\mbox{\tiny{mod }} p^{k-j})}^* \sum_{x_0 \in {(\mathbb{Z}/p^{k}\mathbb{Z})}^n}  e\left(\frac{a F(x_0)+\frac{c}{p^j}\cdot x_0}{p^{k-j}} \right).
     \end{align*}
    By replacing $x_0$ by $ x_0+p^{k-j}y,$ for some fixed non-zero $y  \in (\mathbb{Z}/p^j )^n$, the most inner sum vanishes unless $p^j | c$. 
     Indeed, 
    \begin{align*}
    &\sum_{x_0 \in {(\mathbb{Z}/p^{k}\mathbb{Z})}^n}  e\left(\frac{a F(x_0)+\frac{c}{p^j}\cdot x_0}{p^{k-j}} \right) \\
             &=  \sum_{x_0+p^{k-j}y \in {(\mathbb{Z}/p^{k}\mathbb{Z})}^n}  e\left(\frac{a F(x_0)+\frac{c}{p^j}\cdot x_0}{p^{k-j}}   \right) e\left(\frac{c \cdot y}{p^j}\right) \\
       &= e\left(\frac{c \cdot y}{p^j}\right)\sum_{x_0\in {(\mathbb{Z}/p^{k}\mathbb{Z})}^n}  e\left(\frac{a F(x_0)+\frac{c}{p^j}\cdot x_0}{p^{k-j}}   \right). 
    \end{align*}
    Here, in the last equality, we use the fact that each component of $x_0$ and $x_0+p^{k-j}y$ runs over a complete  residue system modulo $p^k$.
\end{proof}
Before we state two main theorems of this section, we recall some facts related to the Gauss character sum (see \cite{B}) 
that will be used often in the proofs of the theorems. Let $\chi$ be a non-trivial Dirichlet character modulo $k$ with conductor $l$. For integer $m$, the Gauss character sum $\tau_k(m,\chi)$ is defined by
        $$\tau_k(m,\chi)=\sum_{n=0}^{k-1} \chi(n)\, e\left(\frac{m\, n}{k}\right),$$
        and one gets (see \cite{B}, Section 1.6) \begin{align*}
            \tau_k(m,\chi)=\frac{k}{l} \, \tau_l\left(\frac{l\,m}{k},\chi\right) \, \mathds{1} _{k|(lm)} \tag{3.2} \label{eq3.2}
        \end{align*} If $\chi$ is a primitive character modulo $k$, then one has (see \cite{IK}, Lemma 3.2)
        \begin{align*}            \tau_k(m,\chi)=\overline{\chi}(m)\, \tau_k(1,\chi), \tag{3.3} \label{eq3.3}
        \end{align*}
          where 
   $$\tau_k(1,\chi)=\tau(\chi)=\sum_{n=0}^{k-1} \chi(n) \,e \left ( \frac{n}{k} \right ).$$
       Moreover, for odd prime $p \nmid m$, one has (see \cite{B}, Theorem 1.5.1) \begin{align*}
           \sum_{n=0}^{p-1} \left( \frac{n}{p}\right )_L e\left(\frac{m\,n}{p}\right) = \sum_{n=0}^{p-1} e \left ( \frac{m\, n^2}{p} \right ), \tag{3.4} \label{eq3.4}
       \end{align*}
       where $\left( \frac{\cdot}{p}\right )_L$ is the Legendre symbol.
       
We are now in a position to prove two main theorems, Theorems \ref{Thm 3.4} and \ref{Thm 3.5}, of this section, accounting for the odd prime and even prime cases respectively. 
\begin{thm}
\label{Thm 3.4}
If $c \, \neq \, 0$ and $p$ is an odd prime, one has
\begin{align*}
     S (p^{k},c)
            = & \,\mu \sum_{l=0}^{\lfloor (k-1)/2 \rfloor}  p^{(2-n)l}  \, \left(\frac{c^t J^{-1} c/ p^{2(k-l)-2} }{p} \right )_L \, \mathds{1} _{p^{k-(2l+1)}|c} \,   \, \mathds{1} _{p^{2(k-l)-2}|c^t J^{-1} c}+ \\ & - p^{n k -1}\sum_{l=1}^{\lfloor k/2 \rfloor} p^{(2-n)l}\,   \, \mathds{1} _{p^{k-2l}|c} \,   \, \mathds{1} _{p^{2(k-l)-1}\,||\,c^t J^{-1} c}+ \\&+ p^{n k -1}(p-1)\sum_{l=1}^{\lfloor k/2 \rfloor} p^{(2-n)l}\,   \, \mathds{1} _{p^{k-2l}|c} \,   \, \mathds{1} _{p^{2(k-l)}|c^t J^{-1} c} +p^{nk} \,  \mathds{1} _{p^{k}|c},
               \end{align*}  
               where $\mu := p^{nk -n/2+1/2} \, i^{(n-1)( p-1)^2/4} \,\left(\frac{ \det J}{p} \right )_L$.
\end{thm}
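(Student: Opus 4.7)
The plan is to start from Proposition \ref{Prop 3.3}, which writes $S(p^k,c)$ as a sum over $j\in\{0,\ldots,k\}$. The term $j=k$ contributes $p^{nk}\mathds{1}_{p^k|c}$ trivially. For each $0\le j<k$, I set $m:=k-j$; since $p^j\mid c$ and $aF(x_0+p^m z)\equiv aF(x_0)\pmod{p^m}$, the summand is invariant under $x_0\mapsto x_0+p^m z$ for $z\in(\mathbb{Z}/p^j)^n$, so summing over $x_0\in(\mathbb{Z}/p^k\mathbb{Z})^n$ yields $p^{nj}\cdot p^{nm/2}G_m(2aJ,c/p^j)$. Because $J$ is diagonal with $\pm1$ entries and $(a,p)=1$, the matrix $2aJ$ is invertible modulo $p$; Proposition \ref{Prop 3.2} (completing the square, using $J^{-1}=J$) then gives
\begin{align*}
G_m(2aJ,c/p^j)=e\bigl(-\overline{4a}\,c^tJ^{-1}c/p^{2j+m}\bigr)\,G_m(2aJ),
\end{align*}
where $\overline{4a}$ denotes the inverse of $4a$ modulo $p^m$.

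Next I factor $G_m(2aJ)=\prod_{i=1}^n g_m(ab_i)$ by Lemma \ref{Lem 3.1}, with $g_m(c):=p^{-m/2}\sum_{x\pmod{p^m}}e(cx^2/p^m)$. A direct induction (write $x=y+p^{\lceil m/2\rceil}z$ and apply orthogonality of additive characters in $z$) yields $g_m(c)=1$ for even $m$ and $g_m(c)=\bigl(\tfrac{c}{p}\bigr)_L\epsilon_p$ for odd $m$, where $\epsilon_p:=i^{(p-1)^2/4}$; since $n$ is odd, $G_m(2aJ)=\bigl(\tfrac{a}{p}\bigr)_L\bigl(\tfrac{\det J}{p}\bigr)_L\epsilon_p^n$ for odd $m$ and $G_m(2aJ)=1$ for even $m$. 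This dichotomy generates the three distinct sums of the theorem. Writing $N:=c^tJ^{-1}c/p^{2j}$ and using the bijection $b=-\overline{4a}$ on $(\mathbb{Z}/p^m)^*$, the remaining sum over $a$ becomes, for even $m=2l$ with $l\ge1$, the Ramanujan sum $c_{p^m}(N)$, which equals $p^{m-1}(p-1)$, $-p^{m-1}$, or $0$ according as $p^m\mid N$, $p^{m-1}\|N$, or neither; this recovers the second and third sums. For odd $m=2l+1$, the sum becomes $\bigl(\tfrac{-1}{p}\bigr)_L\bigl(\tfrac{\det J}{p}\bigr)_L\epsilon_p^n\,\tau_{p^m}(N,\chi)$ with $\chi(b)=\bigl(\tfrac{b}{p}\bigr)_L$ of conductor $p$, and formulas (\ref{eq3.2})--(\ref{eq3.3}) evaluate $\tau_{p^m}(N,\chi)=p^{m-1/2}\epsilon_p\bigl(\tfrac{N/p^{m-1}}{p}\bigr)_L\mathds{1}_{p^{m-1}\|N}$, producing the first sum.

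The main obstacle will be bookkeeping. Using $\epsilon_p^2=\bigl(\tfrac{-1}{p}\bigr)_L$ and the parity of $n-1$, a short case analysis on $p\pmod4$ shows $\epsilon_p^{n+1}\bigl(\tfrac{-1}{p}\bigr)_L=i^{(n-1)(p-1)^2/4}$, which reconciles the constant in the odd-$m$ case with the $\mu$ of the statement. Tracking $p$-exponents yields $p^{nj+nm/2+m-1/2}=p^{nk-n/2+1/2}\cdot p^{(2-n)l}$ when $m=2l+1$ and $p^{nj+nm/2+m-1}=p^{nk-1}\cdot p^{(2-n)l}$ when $m=2l$, matching the theorem. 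Finally, the indicator $\mathds{1}_{p^{2(k-l)-2}\mid c^tJ^{-1}c}$ appearing in the first sum is equivalent to the exact-divisibility $p^{m-1}\|N$ produced above, since the Legendre symbol $\bigl(\tfrac{c^tJ^{-1}c/p^{2(k-l)-2}}{p}\bigr)_L$ vanishes whenever $p^{2(k-l)-1}\mid c^tJ^{-1}c$.
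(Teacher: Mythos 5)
Your proposal is correct and follows essentially the same route as the paper's proof: decompose via Proposition \ref{Prop 3.3}, reduce the inner sum to the Gauss sum $G_{k-j}(2aJ,c/p^j)$, complete the square with Proposition \ref{Prop 3.2}, evaluate $G_{k-j}(2aJ)$ by diagonalizing, and split on the parity of $k-j$ so that the $a$-sum becomes a Ramanujan sum (even case) or a Gauss character sum evaluated via (\ref{eq3.2})--(\ref{eq3.4}) (odd case). Your handling of the $\overline{4a}$ from completing the square and the resulting $\left(\tfrac{-1}{p}\right)_L$ is in fact a bit more careful than the paper's, but the constants reconcile to the same $\mu$.
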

\begin{proof}
Using Lemma \ref{Lem 3.1}, Proposition \ref{Prop 3.2}, and periodicity of $e(\cdot/p^{k-j})$, we get
\begin{align*}
    S(p^{k},c)&= \sum_{j=0}^{k}\sum_{a (\mbox{\tiny{mod }} p^{k-j})}^* \sum_{x_0 \in {(\mathbb{Z}/p^{k}\mathbb{Z})}^n}  e\left(\frac{a F(x_0)+\frac{c}{p^j}\cdot x_0}{p^{k-j}} \right) \, \mathds{1} _{p^j|c}
    \\
    &= \sum_{j=0}^{k-1} p^{nj}\sum_{a (\mbox{\tiny{mod }} p^{k-j})}^* \sum_{x_0 \in {(\mathbb{Z}/p^{k-j}\mathbb{Z})}^n}  e\left(\frac{a F(x_0)+\frac{c}{p^j}\cdot x_0}{p^{k-j}} \right) \, \mathds{1} _{p^j|c}+p^{nk} \, \mathds{1} _{p^k|c} 
     \\
    &=  \sum_{j=0}^{k-1} p^{nj}\sum_{a (\mbox{\tiny{mod }} p^{k-j})}^* p^{n(k-j)/2} \, G_{k-j}(2aJ,c/p^j)\, \mathds{1} _{p^j|c}+p^{nk} \, \mathds{1} _{p^k|c}\\
    &= \sum_{j=0}^{k-1} p^{nj}\sum_{a (\mbox{\tiny{mod }} p^{k-j})}^* p^{n(k-j)/2} \, e\left( \frac{-a\,c_{p,j,J}}{p^{k-j}} \right)\, \mathds{1} _{p^j|c} \, G_{k-j}(2aJ)+p^{nk} \, \mathds{1} _{p^k|c},
\end{align*}
where $c_{p,j,J}:=(c/p^j)^t J^{-1} (c/p^j)$, and $ G_{k-j}(2aJ)$ in turn can be simplified by using Lemma \ref{Lem 3.1} together with the well-known quadratic Gauss sum (see \cite{B}, Theorem 1.5.2). Specifically, 
\begin{align*}
        G_{k-j}(2aJ)&= \prod_{r=1}^n G_{k-j}(2ab_r) 
        =\prod_{r=1}^n p^{-(k-j)/2}\sum_{x ( \mbox {\tiny{mod}} \, p^{k-j} )} e\left (\frac{a\, b_r \,x^2}{p^{k-j}}   \right ) \\
        &= \prod_{r=1}^n p^{-(k-j)/2} \left(\frac{ab_r}{p^{k-j}} \right )_L p^{(k-j)/2} \, i^{(p^{k-j}-1)^2/4} 
        = \left(\frac{a^n \det J}{p^{k-j}} \right )_L  i^{n( p^{k-j}-1)^2/4}, 
    \end{align*}
    where $(\frac{\cdot}{p^{k-j}})_L$ actually means the Jacobi symbol. Thus,
\begin{align*}
    S(p^{k},c) = \sum_{j=0}^{k-1} p^{n(k+j)/2} \,  i^{n( p^{k-j}-1)^2/4} \, \left(\frac{ \det J}{p^{k-j}} \right )_L \, \overline{X_1}  \, \mathds{1} _{p^j|c} +p^{nk} \, \mathds{1} _{p^k|c}, \tag{3.5} \label{eq3.5}
\end{align*}
in which
$$X_1=\sum_{a (\mbox{\tiny{mod }} p^{k-j})}^*  \left(\frac{a }{p^{k-j}} \right )_L   \, e\left( \frac{a\, c_{p,j,J}}{p^{k-j}} \right).$$

If $2 \nmid (k-j)$, we use (\ref{eq3.2})-(\ref{eq3.4}) to obtain
\begin{align*}
X_1 &=p^{k-j-1}\sum_{a (\mbox{\tiny{mod }} p)} \left(\frac{a}{p} \right )_L e\left (\frac{a\,( p \, c_{p,j,J} / p^{k-j}) }{p}\right ) \, \mathds{1} _{p^{k-j} | ( p\, c_{p,j,J} )}\\   
&= p^{k-j-1/2}\overline{\left(\frac{(p\, c_{p,j,J} / p^{k-j}) }{p} \right )_L} i^{(p-1)^2/4} \, \mathds{1} _{p^{k-j} | ( p\, c_{p,j,J} )},
                     \end{align*}
Since $p^{k-j} \, \equiv \, p \mbox { (mod } 4 \mbox{)}$ with $ 2 \nmid (k-j)$ and the Legendre symbol is of order $2$, the summation in $S(p^{k},c)$ (\ref{eq3.5}) under the assumption 
$2\nmid (k-j)$ is equal to 
\begin{align*}
                     &i^{(n-1)( p-1)^2/4} \,\left(\frac{ \det J}{p} \right )\sum_{j=0}^{k-1} p^{n(k+j)/2} \,    p^{k-j-1/2}   \left(\frac{p\, c_{p,j,J} / p^{k-j} }{p} \right )_L  \, \mathds{1} _{2\nmid k-j}\, \mathds{1} _{p^j|c} \, \mathds{1} _{p^{k-j} | ( p\, c_{p,j,J} )}\\
                        &= \mu \sum_{l=0}^{\lfloor (k-1)/2 \rfloor}  p^{(2-n)l}  \, \left(\frac{c^t J^{-1} c/ p^{2(k-l)-2} }{p} \right )_L \, \mathds{1} _{p^{k-(2l+1)}|c} \,  \mathds{1} _{p^{2(k-l)-2}|c^t J^{-1} c},
        \end{align*}  
where $\mu = p^{nk -n/2+1/2} \, i^{(n-1)( p-1)^2/4} \,\left(\frac{ \det J}{p} \right )_L$. We obtain the first term in Theorem \ref{Thm 3.4}. 

Now, we consider $(k-j) > 0$ even. In this case, 
$p^{k-j} \, \equiv \, 1 \mbox {(mod } 4 \mbox{)}$ and then the summation in $S(p^{k},c)$ (\ref{eq3.5}) becomes 
\begin{align*}
        \sum_{j=0}^{k-1} p^{n(k+j)/2} \,   \, \mathds{1} _{p^j|c} \, \mathds{1} _{2|k-j} \sum_{a (\mbox{\tiny{mod }} p^{k-j})}^*    \, e\left( \frac{-a\, c_{p,j,J}}{p^{k-j}} \right),
\end{align*}
where the real-valued Ramanujan sum (see \cite{IK}, Section 3.2) applied to prime-powers yields
\begin{align*}
                \sum_{a (\mbox{\tiny{mod }} p^{k-j})}^{*}  e\left( \frac{-a\, c_{p,j,J}}{p^{k-j}} \right)
  &= \left\{ \begin{array}{cll} 
    0 & \mbox{if } p^{k-j-1} \nmid \, c_{p,j,J}    \\ \\
                         -p^{k-j-1}  & \mbox{if } p^{k-j-1} \,||\, \, c_{p,j,J}       \\ \\
                            \phi(p^{k-j})=p^{k-j-1}(p-1) & \mbox{if } p^{k-j}|\, c_{p,j,J} \end{array}\right. ,
                          \end{align*}
where $\phi$ is the Euler's totient function.
 Thus, we obtain the following two middle terms of $ S(p^{k},c)$ in Theorem \ref{Thm 3.4}: 
  \begin{align*}
        &\sum_{j=0}^{k-1} p^{n(k+j)/2} \,   \, \mathds{1} _{p^j|c} \, \mathds{1} _{2|k-j} \left[(-p^{k-j-1})  \,   \, \mathds{1} _{p^{k-j-1} \,||\, \, c_{p,j,J}}+p^{k-j-1}(p-1)\,   \, \mathds{1} _{p^{k-j}|\, c_{p,j,J}} \right]\\
        &=- p^{n k -1}\sum_{l=1}^{\lfloor k/2 \rfloor} p^{(2-n)l}\,   \, \mathds{1} _{p^{k-2l}|c} \,   \, \mathds{1} _{p^{2(k-l)-1}\,||\,c^tJ^{-1}c} + \\&+  p^{n k -1}(p-1)\sum_{l=1}^{\lfloor k/2 \rfloor} p^{(2-n)l}\,   \, \mathds{1} _{p^{k-2l}|c} \,   \, \mathds{1} _{p^{2(k-l)}|c^tJ^{-1}c}. 
 \end{align*}
Hence, the theorem follows.
\end{proof}
In particular, by considering some specific values of $c$, we apply Theorem \ref{Thm 3.4} to deduce the following corollaries. 
\begin{customcor}{${3.4.1}$}
\label{Cor 3.4.1}
\textit{If $c= 0$ and $p$ is an odd prime, one has
 \begin{align*}
   \qquad \qquad \qquad \qquad  \qquad S(p^{k},c)
    =p^{nk-1}(p-1)\sum_{l=0}^{\lfloor k/2 \rfloor} p^{(2-n)l}+ p^{nk}. \hspace{1.8 in} \square
\end{align*}  }  
\end{customcor}

\begin{customcor}{${3.4.2}$}
\label{Cor 3.4.2}
\textit{
If $c \, \neq \, 0$, $c^t J c =0$, and $p$ is an odd prime, one has
 \begin{align*}
   \qquad \qquad \qquad S(p^{k},c)
    =p^{nk-1}(p-1)\sum_{l=0}^{\lfloor k/2 \rfloor} p^{(2-n)l} \,   \, \mathds{1} _{p^{k-2l}|c}+ p^{nk}\,  \mathds{1} _{p^{k}|c}. \hspace{1.6 in} \square
\end{align*} }
\end{customcor}
We assumed $p$ is odd in Theorem \ref{Thm 3.4}. We also need a companion statement when $p=2$ in Theorem \ref{Thm 3.5}. In fact, for $p=2$, the quadratic Gauss sum yields an extra term of $(1+i^{\#})$, for some variable $\#$, (see \cite{B}, Theorem 1.5.4), which generally makes computation related to this case more irritating and complicated. 
To claim Theorem \ref{Thm 3.5}, we first prove the following Lemmas \ref{Lem 3.5.1} and \ref{Lem 3.5.2}.
        \begin{customlem}{${3.5.1}$}
        \label{Lem 3.5.1}
        \textit{
         \begin{align*} 
     \sum_{a (\mbox{\tiny{mod }} 2^{k-j})}^*\,  e\left( \frac{-a\,c_{2,j,J}}{2^{k-j}} \right)\, \prod_{r=1}^n (1+i^{ab_r}) = d_{n,j,k,J} \, 2^{k-j-1},
    \end{align*}
   where
    \begin{align*} d_{n,j,k,J}
     =\left\{ \begin{array}{cll} 
    0 & \mbox{if } 2^{k-j-2} \nmid \,c_{2,j,J}    \\ \\
                         - \, \chi_4(\,c_{2,j,J}/2^{k-j-2}) \,  2^{n/2} \sin \left(\frac{(2m-n) \pi}{4}\right) & \mbox{if } 2^{k-j-2} \,||\, \,c_{2,j,J}       \\ \\
                           - \,  2^{n/2} \cos \left(\frac{(2m-n) \pi}{4}\right) & \mbox{if } 2^{k-j-1} \,||\, \,c_{2,j,J}       \\ \\  2^{n/2} \cos \left(\frac{(2m-n) \pi}{4}\right) & \mbox{if } 2^{k-j} | \,c_{2,j,J}  \end{array}\right.  , \tag{3.6} \label{eq3.6}
    \end{align*}
    and $\chi_4$ is the real Dirichlet character modulo $4$ defined by
    \begin{align*}
         \chi_4(a)
  = \left\{ \begin{array}{cll} 
    1 & \mbox{if } a\, \equiv \, 1 \,\mbox {(mod } 4  \mbox {)}    \\ \\
                           -1 & \mbox{if } a\, \equiv \, 3 \, \mbox {(mod } 4\mbox {).} 
                            \\ \\
                           0 & \mbox{if } 2|a \end{array}\right.
                          \end{align*} }
                                  \end{customlem} 
  \begin{proof}
   By setting $m=\#\{b_r: b_r=1, r=1,\ldots,n\}$, $m=1,\ldots, n-1,$ and switching sides of the quadratic form as necessary, we assume $m > n-m$, i.e., $2m > n$. Then,
\begin{align*}
       \prod_{r=1}^n (1+i^{ab_r}) = 2^{n-m} (1+i^a)^{2m-n}. 
       \end{align*}
Since $i^a$ depends only on $a$ modulo $4$, we get
\begin{align*} 
\sum_{a (\mbox{\tiny{mod }} 2^{k-j})}^*\,  e\left( \frac{-a\,c_{2,j,J}}{2^{k-j}} \right)\, \prod_{r=1}^n (1+i^{ab_r})= 2^{n-m} \,  [ I_+\,  (1+i)^{2m-n} + I_-\,  (1-i)^{2m-n} ],
    \end{align*}
    where we write
    \begin{align*}
        I_{+}:=\sum_{\substack{a (\mbox{\tiny{mod }} 2^{k-j})\\ a \, \equiv \, 1 (\mbox{\tiny{mod }} 4)}}^{*} \,  e\left( \frac{-a\,c_{2,j,J}}{2^{k-j}} \right) \mbox { and }
        I_{-}:=\sum_{\substack{a (\mbox{\tiny{mod }} 2^{k-j})\\ a \, \equiv \,  3 (\mbox{\tiny{mod }} 4)}}^{*} \,  e\left( \frac{-a\,c_{2,j,J}}{2^{k-j}} \right).
    \end{align*}
    While the sum of $I_+$ and $I_-$ comes down to the Ramanujan sum, their difference turns to be a Gauss character sum. Specifically,
    \begin{align*}
                                        I_+ - I_- =& \displaystyle{\sum_{a (\mbox{\tiny{mod }} 2^{k-j})}^*}\, \chi_4(a) \, e\left( \frac{-a\,c_{2,j,J}}{2^{k-j}} \right)
                           ,  \mbox {where }  \chi_4(a)
  = \left\{ \begin{array}{cll} 
    1 & \mbox{if } a\, \equiv \, 1 \,\mbox {(mod } 4  \mbox {)}    \\ \\
                           -1 & \mbox{if } a\, \equiv \, 3 \, \mbox {(mod } 4\mbox {).} 
                            \\ \\
                           0 & \mbox{if } 2|a \end{array}\right.
                          \end{align*}
                         We note that $\chi_4$ is primitive. Using (\ref{eq3.2}) and (\ref{eq3.3}), this Gauss character sum can be explicitly evaluated, and so  \begin{align*}
                I_+ - I_- =2^{k-j-1} \, \chi_4(\,c_{2,j,J}/2^{k-j-2}) \, (-i).
    \end{align*}    
Moreover, the real-valued Ramanujan sum yields
   \begin{align*}
              \sum_{a (\mbox{\tiny{mod }} 2^{k-j})}^*\,  e\left( \frac{-a\,c_{2,j,J}}{2^{k-j}} \right)= \sum_{a (\mbox{\tiny{mod }} 2^{k-j})}^*\,  e\left( \frac{a\,c_{2,j,J}}{2^{k-j}} \right) &= \left\{ \begin{array}{cll} 
    0 & \mbox{if } 2^{k-j-1} \nmid \,c_{2,j,J}    \\ \\
                         -2^{k-j-1}  & \mbox{if } 2^{k-j-1} \,||\, \,c_{2,j,J}       \\ \\
                            \phi(2^{k-j})=2^{k-j-1} & \mbox{if } 2^{k-j} | \,c_{2,j,J}  \end{array}\right. . \end{align*}    
Thus,
  \begin{align*}
        I_{\pm}
            &= \frac{1}{2} \left [\sum_{a (\mbox{\tiny{mod }} 2^{k-j})}^*\,  e\left( \frac{-a\,c_{2,j,J}}{2^{k-j}} \right)\pm \sum_{a (\mbox{\tiny{mod }} 2^{k-j})}^*\, \chi_4(a) \, e\left( \frac{-a\,c_{2,j,J}}{2^{k-j}} \right)\right ] \\
        &=  \left\{ \begin{array}{cll} 
    0 & \mbox{if } 2^{k-j-2} \nmid \,c_{2,j,J}    \\ \\
                         \mp2^{k-j-2} \chi_4(\,c_{2,j,J}/2^{k-j-2}) \, i  & \mbox{if } 2^{k-j-2} \,||\, \,c_{2,j,J}       \\ \\
                           -2^{k-j-2}  & \mbox{if } 2^{k-j-1} \,||\, \,c_{2,j,J}       \\ \\ 2^{k-j-2} & \mbox{if } 2^{k-j} | \,c_{2,j,J}  \end{array}\right. ,
    \end{align*}
    which implies
      \begin{align*} 
         &\sum_{a (\mbox{\tiny{mod }} 2^{k-j})}^*\,  e\left( \frac{-a\,c_{2,j,J}}{2^{k-j}} \right)\, \prod_{r=1}^n (1+i^{ab_r})=
    \\&=2^{n-m}   \left\{ \begin{array}{cll} 
    0 & \mbox{if } 2^{k-j-2} \nmid \,c_{2,j,J}    \\ \\
                         2^{k-j-2} \, \chi_4(\,c_{2,j,J}/2^{k-j-2}) \, i\, 2i\, \mbox{ Im(} (1+i)^{2m-n} \mbox {)} & \mbox{if } 2^{k-j-2} \,||\, \,c_{2,j,J}       \\ \\
                           -2^{k-j-2} \, 2 \mbox{ Re(} (1+i)^{2m-n} \mbox {)} & \mbox{if } 2^{k-j-1} \,||\, \,c_{2,j,J}       \\ \\ 2^{k-j-2}\, 2 \mbox{ Re(} (1+i)^{2m-n} \mbox {)} & \mbox{if } 2^{k-j} | \,c_{2,j,J}  \end{array}\right .\\
                           &=\left\{ \begin{array}{cll} 
    0 & \mbox{if } 2^{k-j-2} \nmid \,c_{2,j,J}    \\ \\
                         -2^{k-j-1} \, \chi_4(\,c_{2,j,J}/2^{k-j-2}) \,  2^{n/2} \sin \left(\frac{(2m-n) \pi}{4}\right) & \mbox{if } 2^{k-j-2} \,||\, \,c_{2,j,J}       \\ \\
                           -2^{k-j-1} \,  2^{n/2} \cos \left(\frac{(2m-n) \pi}{4}\right) & \mbox{if } 2^{k-j-1} \,||\, \,c_{2,j,J}       \\ \\ 2^{k-j-1}\, 2^{n/2} \cos \left(\frac{(2m-n) \pi}{4}\right) & \mbox{if } 2^{k-j} | \,c_{2,j,J}  \end{array}\right .  .  \end{align*}
  \end{proof}
  
\begin{customlem}{${3.5.2}$}
\label{Lem 3.5.2}
\textit{
     \begin{align*}
    \sum_{a (\mbox{\tiny{mod }} 2^{k-j})}^*\, \left (\frac{2}{a } \right )_L \, e\left( \frac{-a \,c_{2,j,J}}{2^{k-j}} \right)\, \prod_{r=1}^n (1+i^{ab_r}) = e_{n,j,k,J} \, 2^{k-j-3/2} \,  \mathds{1} _{2^{k-j-3} \,||\, \,c_{2,j,J} },
\end{align*}
where
\begin{align*}         
    e_{n,j,k,J} = 2^{n/2} \Big[ \chi_{8,2}\left(\frac{\,c_{2,j,J}}{2^{k-j-3}}\right)\sin \left(\frac{(2m-n) \pi}{4}\right)+\chi_{8,3}\left(\frac{\,c_{2,j,J}}{2^{k-j-3}}\right)\cos \left(\frac{(2m-n) \pi}{4}\right)\Big],   \tag{3.7} \label{eq3.7}    
    \end{align*}
    and $\chi_{8,2}, \chi_{8,3}$  are the real Dirichlet characters modulo  $8$ satisfying
   \begin{align*}
        \chi_{8,2}(a)= \chi_{8,3}(a)=0 \mbox { for } (a,8) \, \neq \, 1, \mbox { and } \quad
        \begin{aligned}[c]
                  \begin{tabular}{ |c|c|c| c|c|}  \hline
a \mbox{(\small{mod} } 8\mbox{)} &1 & 3 & 5 & 7 \\ \hline
  $\chi_{8,2}(a)$ & 1 & 1 & -1 &-1 \\ \hline
   $\chi_{8,3}(a)$ & 1 & -1 & -1 &1\\ \hline
\end{tabular} .
   \end{aligned}
   \end{align*} }
     \end{customlem}
     
  \begin{proof}
  Similar to Lemma \ref{Lem 3.5.1}, we get
\begin{align*} 
&\sum_{a (\mbox{\tiny{mod }} 2^{k-j})}^*\, \left (\frac{2}{a } \right )_L \, e\left( \frac{-a\,c_{2,j,J}}{2^{k-j}} \right)\, \prod_{r=1}^n (1+i^{ab_r})
\\&= 2^{n-m}  \left[ I_1\,  (1+i)^{2m-n} + I_7\,  (1-i)^{2m-n}-I_3\,  (1-i)^{2m-n} - I_5\,  (1+i)^{2m-n} \right ],
    \end{align*}
    in which 
    \begin{align*}
        I_{i}:=\sum_{\substack{a (\mbox{\tiny{mod }} 2^{k-j})\\ a \,  \equiv \,i (\mbox{\tiny{mod }} 8)}}^{*} \,  e\left( \frac{-a\,c_{2,j,J}}{2^{k-j}} \right) \mbox { with }
         i=1, 3, 5, \mbox { and } 7.
    \end{align*}
    We observe that
    \begin{align*}
                  \left\{ \begin{array}{cll} 
                              I_1 - I_7-I_3 + I_5 &=  \displaystyle{\sum_{a (\mbox{\tiny{mod }} 2^{k-j})}^*}\, \chi_{8,1}(a) \, e\left( \frac{-a\,c_{2,j,J}}{2^{k-j}} \right) =:A_1  \\ \\
                              I_1 - I_7+I_3 - I_5 &=  \displaystyle{\sum_{a (\mbox{\tiny{mod }} 2^{k-j})}^*}\, \chi_{8,2}(a) \, e\left( \frac{-a\,c_{2,j,J}}{2^{k-j}} \right) =:A_2  \\ \\
                         I_1 + I_7-I_3 - I_5 &=  \displaystyle{\sum_{a (\mbox{\tiny{mod }} 2^{k-j})}^*}\, \chi_{8,3}(a)  e\left( \frac{-a\,c_{2,j,J}}{2^{k-j}} \right)=:A_3   \\ \\
                           I_1 + I_7+I_3 + I_5 &=  \displaystyle{\sum_{a (\mbox{\tiny{mod }} 2^{k-j})}^*}\,  e\left( \frac{-a\,c_{2,j,J}}{2^{k-j}} \right) =:A_4      \end{array}\right.,                           \end{align*}
                          where  $\chi_{8,1}, \chi_{8,2}, \chi_{8,3}$  are the real Dirichlet characters modulo  $8$ satisfying          \begin{align*}
       \chi_{8,1}(a)= \chi_{8,2}(a)= \chi_{8,3}(a)=0 \mbox { for } (a,8) \, \neq \, 1 \mbox { and } \quad
        \begin{aligned}[c]
                  \begin{tabular}{ |c|c|c| c|c|}  \hline
a \mbox {(\small{mod} } 8\mbox {)}&1 & 3 & 5 & 7 \\ \hline
 $\chi_{8,1}(a)$ & 1 & -1 & 1 &-1 \\ \hline
  $\chi_{8,2}(a)$ & 1 & 1 & -1 &-1 \\ \hline
   $\chi_{8,3}(a)$ & 1 & -1 & -1 &1\\ \hline
\end{tabular} .
   \end{aligned}
     \end{align*}
     Moreover, $A_4$ is the Ramanujan sum, and applying (\ref{eq3.2}) and (\ref{eq3.3}), we have
  \begin{align*}
       A_1    =2^{k-j-3} \, \chi_{8,1}(\,c_{2,j,J}/2^{k-j-3}) \, \overline{\sum_{a (\mbox{\tiny{mod }} 8)}^*\, \chi_{8,1}(a) \,e\left( \frac{a}{8} \right)}=0,
    \end{align*}      
  \begin{align*}
        A_2       =2^{k-j-3} \, \chi_{8,2}(\,c_{2,j,J}/2^{k-j-3}) \, \overline{\sum_{a (\mbox{\tiny{mod }} 8)}^*\, \chi_{8,2}(a) \,e\left( \frac{a}{8} \right)}
        = 2^{k-j-3/2} \, \chi_{8,2}(\,c_{2,j,J}/2^{k-j-3}) \, (-i),
    \end{align*}          
      and                       
  \begin{align*}
       A_3
          &=2^{k-j-3} \, \chi_{8,3}(\,c_{2,j,J}/2^{k-j-3}) \, \overline{\sum_{a (\mbox{\tiny{mod }} 8)}^*\, \chi_{8,3}(a) \,e\left( \frac{a}{8} \right)}
        = 2^{k-j-3/2} \, \chi_{8,3}(\,c_{2,j,J}/2^{k-j-3}) .
    \end{align*} 
    Thus,
 \begin{equation*}
\begin{aligned}[c]
 I_1&=(1/4)(A_1+A_2+A_3+A_4)  \\
        &=  \left\{ \begin{array}{cll} 
    0 & \mbox{if } 2^{k-j-3} \nmid \,c_{2,j,J}    \\ \\
                         (1/4)(A_2+A_3)  & \mbox{if } 2^{k-j-3} \,||\, \,c_{2,j,J}       \\ \\
                           (1/4)A_4  & \mbox{if } 2^{k-j-1} \,||\, \,c_{2,j,J}       \\ \\ (1/4)A_4 & \mbox{if } 2^{k-j} | \,c_{2,j,J}  \end{array}\right .,
\end{aligned}
\qquad\qquad
\begin{aligned}[c]
I_7&=(1/4)(-A_1-A_2+A_3+A_4)  \\
        &=  \left\{ \begin{array}{cll} 
    0 & \mbox{if } 2^{k-j-3} \nmid \,c_{2,j,J}    \\ \\
                         (1/4)(-A_2+A_3)  & \mbox{if } 2^{k-j-3} \,||\, \,c_{2,j,J}       \\ \\
                           (1/4)A_4  & \mbox{if } 2^{k-j-1} \,||\, \,c_{2,j,J}       \\ \\ (1/4)A_4 & \mbox{if } 2^{k-j} | \,c_{2,j,J}  \end{array}\right .,
\end{aligned}
\end{equation*}
    \begin{align*}
\begin{aligned}[c]
 I_3&=(1/4)(-A_1+A_2-A_3+A_4)  \\
        &=  \left\{ \begin{array}{cll} 
    0 & \mbox{if } 2^{k-j-3} \nmid \,c_{2,j,J}    \\ \\
                         (1/4)(A_2-A_3)  & \mbox{if } 2^{k-j-3} \,||\, \,c_{2,j,J}       \\ \\
                           (1/4)A_4  & \mbox{if } 2^{k-j-1} \,||\, \,c_{2,j,J}       \\ \\ (1/4)A_4 & \mbox{if } 2^{k-j} | \,c_{2,j,J}  \end{array}\right ., \mbox { and} \qquad \end{aligned}
\begin{aligned}[c]
I_5&=(1/4)(A_1-A_2-A_3+A_4)   \\
        &=  \left\{ \begin{array}{cl} 
    0 & \mbox{if } 2^{k-j-3} \nmid \,c_{2,j,J}    \\ \\
                         (1/4)(-A_2-A_3)  & \mbox{if } 2^{k-j-3} \,||\, \,c_{2,j,J}       \\ \\
                           (1/4)A_4  & \mbox{if } 2^{k-j-1} \,||\, \,c_{2,j,J}       \\ \\ (1/4)A_4 & \mbox{if } 2^{k-j} | \,c_{2,j,J}  \end{array}\right .,
\end{aligned}
\end{align*}
    which ultimately yields
      \begin{align*}
      &\sum_{a (\mbox{\tiny{mod }} 2^{k-j})}^*\, \left (\frac{2}{a } \right )_L \, e\left( \frac{-a\,c_{2,j,J}}{2^{k-j}} \right)\, \prod_{r=1}^n (1+i^{ab_r})=
    \\&=2^{n/2} \left\{ \begin{array}{cll} 
    0 & \mbox{if } 2^{k-j-3} \nmid \,c_{2,j,J}    \\ \\  2^{k-j-3/2} \, [ \chi_{8,2}(\frac{\,c_{2,j,J}}{2^{k-j-3}})\sin (\frac{(2m-n) \pi}{4})+\chi_{8,3}(\frac{\,c_{2,j,J}}{2^{k-j-3}})\cos (\frac{(2m-n) \pi}{4})] & \mbox{if } 2^{k-j-3} \,||\, \,c_{2,j,J}       \\ \\    0& \mbox{if } 2^{k-j-1} \,||\, \,c_{2,j,J}       \\ \\ 0 & \mbox{if } 2^{k-j} | \,c_{2,j,J}  \end{array}\right . \\&= e_{n,j,k,J} \, 2^{k-j-3/2} \, \mathds{1} _{2^{k-j-3} \,||\, \,c_{2,j,J} },   \end{align*}
                           where
                           \begin{align*}         
    e_{n,j,k,J} = 2^{n/2} \Big[ \chi_{8,2}\left(\frac{\,c_{2,j,J}}{2^{k-j-3}}\right)\sin \left(\frac{(2m-n) \pi}{4}\right)+\chi_{8,3}\left(\frac{\,c_{2,j,J}}{2^{k-j-3}}\right)\cos \left(\frac{(2m-n) \pi}{4}\right)\Big].      \end{align*}
  \end{proof}
  
        Combining Lemmas \ref{Lem 3.5.1} and \ref{Lem 3.5.2} together, we deduce Theorem \ref{Thm 3.5}:
        \begin{thm}
        \label{Thm 3.5}
        If $c \, \neq \, 0$ and $p=2$, one has
        \begin{align*}
    S(2^{k},c)= &\sum_{j=0}^{k-2}  2^{j(n/2-1)+k(n/2+1)-1} d_{n,j,k,J} \,  \mathds{1} _{2^j|c}\, \mathds{1} _{2|k-j} \,  \,  +\\&+ \sum_{j=0}^{k-2}  2^{j(n/2-1)+k(n/2+1)-3/2} \, \left (\frac{2}{\det J} \right )_L\, e_{n,j,k,J} \,  \mathds{1} _{2^j|c} \, \mathds{1} _{2\nmid k-j} \, \mathds{1} _{2^{k-j-3} \,||\, \,c_{2,j,J} } + 2^{nk} \, \mathds{1} _{2^{k}|c},
\end{align*}
where $d_{n,j,k,J}$ (\ref{eq3.6}) and $ e_{n,j,k,J}$ (\ref{eq3.7}) are constants depending on $n, j, k$, and $J$. 
        \end{thm}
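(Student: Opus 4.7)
The plan is to parallel the proof of Theorem \ref{Thm 3.4}, with the new ingredient being the more delicate evaluation of the quadratic Gauss sum at $p=2$. I would begin by applying Proposition \ref{Prop 3.3}, Proposition \ref{Prop 3.2}, and Lemma \ref{Lem 3.1} exactly as in the odd case to obtain
\begin{align*}
S(2^{k},c) = \sum_{j=0}^{k-1} 2^{n(k+j)/2}\,\mathds{1}_{2^{j}|c} \sum_{a\,(\mbox{\tiny{mod }} 2^{k-j})}^{*} e\!\left(\frac{-a\, c_{2,j,J}}{2^{k-j}}\right) G_{k-j}(2aJ) + 2^{nk}\mathds{1}_{2^{k}|c},
\end{align*}
and then factor $G_{k-j}(2aJ)=\prod_{r=1}^{n} G_{k-j}(2ab_{r})$ using $J^{-1}=J$.

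The second step is to invoke the $p=2$ quadratic Gauss sum formula (Berndt, Theorem 1.5.4) for each one-dimensional factor. The outcome splits into three regimes: when $k-j=1$, the sum $\sum_{x\,(\mbox{\tiny mod }2)} e(ab_{r}x^{2}/2)$ vanishes because $ab_{r}$ is odd, which kills the $j=k-1$ contribution and justifies truncation at $j=k-2$; when $k-j\geq 2$ is even, $G_{k-j}(2ab_{r}) = (1+i^{ab_{r}})$; and when $k-j\geq 3$ is odd, $G_{k-j}(2ab_{r})$ picks up an extra $\sqrt{2}$ and a Legendre-type factor in $ab_{r}$. Multiplicativity of the symbol together with $n$ odd collapses $\prod_{r}(2/ab_{r})_{L}$ to $(2/a)_{L}(2/\det J)_{L}$, extracting precisely the $(2/\det J)_{L}$ factor that appears in the statement.

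The third step is to recognize that the resulting inner sum over $a$ is exactly the object evaluated in Lemma \ref{Lem 3.5.1} when $k-j$ is even, and in Lemma \ref{Lem 3.5.2} when $k-j$ is odd. Substituting gives $d_{n,j,k,J}\cdot 2^{k-j-1}$ and $e_{n,j,k,J}\cdot 2^{k-j-3/2}$ respectively, together with the divisibility indicators on $c_{2,j,J}$. Collecting the external factor $2^{n(k+j)/2}$ with the $2^{k-j-1}$ (even case) produces the exponent $j(n/2-1)+k(n/2+1)-1$, and combining with the $2^{n/2}\cdot 2^{k-j-3/2}$ (odd case, where the $2^{n/2}$ comes from the $\sqrt{2}$-per-factor in the $n$-fold Gauss sum product) produces $j(n/2-1)+k(n/2+1)-3/2$, matching the theorem.

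The main obstacle I anticipate is the bookkeeping in the odd-$(k-j)$ regime: one must track the $\sqrt{2}$ contributions from each one-dimensional Gauss sum, the normalization $2^{-(k-j)/2}$ implicit in $G_{k-j}$, and the $2^{k-j-3/2}$ coming out of Lemma \ref{Lem 3.5.2}, and verify they assemble into exactly the claimed half-integer exponent. A secondary subtlety is extending the Legendre symbol to $\det J=\pm 1$ (effectively using the Kronecker convention $(2/-1)=1$) so that $\prod_{r}(2/b_{r})_{L}=(2/\det J)_{L}$ emerges cleanly from the product. Once these are handled, the three contributions combine into the stated formula.
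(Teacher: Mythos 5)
Your proposal is correct and follows essentially the same route as the paper: Proposition \ref{Prop 3.3} plus Lemma \ref{Lem 3.1}, the vanishing of the $2$-adic Gauss sum when $k-j=1$ (truncating at $j=k-2$), Berndt's Theorem 1.5.4 giving $G_{k-j}(2ab_r)=\left(\frac{2^{k-j}}{ab_r}\right)_L(1+i^{ab_r})$ for $k-j\geq 2$, the collapse of $\left(\frac{2^{k-j}}{a^n\det J}\right)_L$ using $n$ odd, and then Lemmas \ref{Lem 3.5.1} and \ref{Lem 3.5.2} according to the parity of $k-j$. One small caution on bookkeeping: the $2^{n/2}$ is already contained in $e_{n,j,k,J}$ (and in $d_{n,j,k,J}$) via the product $\prod_r(1+i^{ab_r})$, so the external prefactor to multiply by $2^{k-j-3/2}$ is just $2^{n(k+j)/2}$, and the extra $2^{-1/2}$ relative to the even case comes from the modulus-$8$ Gauss character sum in Lemma \ref{Lem 3.5.2}, not from the quadratic Gauss sum itself — but your stated final exponents are the correct ones.
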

  \begin{proof} 
  We proceed similarly as the case $p \, \neq \, 2$ and get
\begin{align*}
    S(2^{k},c)
    &= \sum_{j=0}^{k-1} 2^{nj}\sum_{a (\mbox{\tiny{mod }} 2^{k-j})}^* 2^{n(k-j)/2} \, e\left( \frac{-a\,c_{2,j,J}}{2^{k-j}} \right)\, \mathds{1} _{2^j|c} \, G_{k-j}(2aJ)+2^{nk} \, \mathds{1} _{2^{k}|c}.
\end{align*}
where
\begin{align*}
    G_{k-j}(2aJ)&=\prod_{r=1}^n G_{k-j}(2ab_r)     = \prod_{r=1}^n 2^{-(k-j)/2}\sum_{x ( \mbox {\tiny{mod}} \, 2^{k-j} )} e\left (\frac{a\,b_{r}\,x^2}{2^{k-j}}   \right ).       \end{align*}    
\begin{itemize}
    \item If $k-j=1$, then Theorem 1.5.1 in \cite{B} yields
        \begin{align*}
    \sum_{x ( \mbox {\tiny{mod}} \, 2 )} e\left (\frac{a\,b_{r}\,x^2}{2}   \right )=0, \mbox { which implies }G_{k-j}(2aJ)=0. 
        \end{align*}  
      \item If $k-j > 1$, then Lemma \ref{Lem 3.1} and Theorem 1.5.4 in \cite{B} give us 
           \begin{align*}
    G_{k-j}(2aJ)&=\prod_{r=1}^n G_{k-j}(2ab_r) 
    = \prod_{r=1}^n 2^{-(k-j)/2}\sum_{x ( \mbox {\tiny{mod}} \, 2^{k-j} )} e\left (\frac{a\,b_r\,x^2}{2^{k-j}}   \right ) \\
    &= \prod_{r=1}^n 2^{-(k-j)/2} \left (\frac{2^{k-j}}{a\,b_r} \right )_L 2^{k-j/2}    (1+i^{ab_r}) 
    = \left (\frac{2^{k-j}}{a^n \,\det J} \right )_L \prod_{r=1}^n (1+i^{ab_r}).
    \end{align*}    
\end{itemize}
Thus, $S(2^{k},c)$ becomes
\begin{align*}
      \sum\limits_{j=0}^{k-2}  2^{n(k+j)/2} \left (\frac{2^{k-j}}{\det J} \right )_L \sum\limits_{a (\mbox{\tiny{mod }} 2^{k-j})}^*\, \left (\frac{2^{k-j}}{a } \right )_L \, e\left( \frac{-a\,c_{2,j,J}}{2^{k-j}} \right)\, \prod_{r=1}^n (1+i^{ab_r}) \,  \mathds{1} _{2^j|c} \, +2^{nk} \, \mathds{1} _{2^{k}|c}.
\end{align*}
We consider the parity of $(k-j)$ and use Lemmas \ref{Lem 3.5.1} and \ref{Lem 3.5.2} to complete the proof. 
\end{proof}

As in the case $p \, \neq \, 2$, the following two corollaries are the consequences of Theorem \ref{Thm 3.5} applied to certain values of $c$. Specifically, \begin{customcor}{${3.5.1}$}
\label{Cor 3.5.1} 
\textit{
If $c=0$ and $p=2$, one has
\begin{align*}
    S(2^{k},c)= &\sum_{j=0}^{k-2}  2^{j(n/2-1)+k(n/2+1)-1} d_{n,J} \,  \mathds{1} _{2|k-j} \,  + 2^{nk} ,
\end{align*}
        where 
 \begin{align*}
                d_{n,J}= \left\{ \begin{array}{cll} 
    2^{(n-1)/2} & \mbox{if } 2m-n \, \equiv \, 1 \mbox { or } 7 \mbox { (mod } 8 \mbox {)}    \\ \\
                         -2^{(n-1)/2} & \mbox{if } 2m-n \, \equiv \, 3 \mbox { or } 5 \mbox { (mod } 8 \mbox {)} \quad  
                         \end{array}\right. \tag{3.8} \label{eq3.8}
                          \end{align*} }
                          \end{customcor}
\begin{proof} It follows by applying Theorem $\ref{Thm 3.5}$ to the case $c=0$ and noting that $\displaystyle{\left (\frac{2}{x } \right )_L=0}$ for $(2,x) \, \neq \, 1$ and $\displaystyle{\cos \left(\frac{(2m-n) \pi}{4}\right)=\pm 2^{-1/2}}$ depending only on $2m-n$ modulo $8$. 
\end{proof}
Using the arguments in Corollary \ref{Cor 3.5.1}, we also obtain
\begin{customcor}{${3.5.2}$}
$\label{Cor 3.5.2}$  
\textit{
If $c \, \neq \, 0$, $c^t J^{-1} c=0$ and $p=2$, one has
\begin{align*}
    S(2^{k},c)= &\sum_{j=0}^{k-2}  2^{j(n/2-1)+k(n/2+1)-1} d_{n,J} \,  \mathds{1} _{2^j|c}\, \mathds{1} _{2|k-j} \,  + 2^{nk} \, \mathds{1} _{2^{k}|c},
\end{align*}
where $d_{n,J}$ (\ref{eq3.8}) is defined as in Corollary \ref{Cor 3.5.1}. } \hspace{ 2.69 in} $\square$
 \end{customcor}
 
\section{The Dirichlet series}
\label{Sec 4}
Let $s=\sigma+it$. We recall that the Dirichlet series $D(c,s)$ converges absolutely for $\sigma=\sigma_0 >1$, and it has multiplicative coefficient $S(q,c)$. Therefore, $D(c,s)$ admits the Euler product with Euler's factor $D_p(c,s)$ given in
\begin{align*}
    D(c,s)=\sum_{q=1}^{\infty} \frac{S(q,c)}{q^{s+n+1}}=\prod_p  \sum_{k=0}^{\infty}\frac{S(p^{k},c)}{p^{k(s+n+1)}}=:\prod_p D_p(c,s).
\end{align*}
Having studied $S(p^{k},c)$ in Section \ref{Sec 3}, we are ready to investigate the analytic continuation of $D(c,s)$ regarded as a function of $s$. Indeed, $D(c,s)$ can be written in terms of the most basic and well-known Dirichlet series, the so-called Riemann zeta function, and Dirichlet L-functions. To ease the exposition, we further divide this section into 3 subsections corresponding to the values of $c$ of interest.

\addtocontents{toc}{\protect\setcounter{tocdepth}{1}}
{ \centering
\subsection*{4.1}{The case $c=0$}
\par }

We recall that the summation over $c$ occurring in our secondary term $c_2$ in (\ref{eq1.3}) is the result of an application of Poisson summation. We thereby expect to retrieve the main term in the asymptotic of $N(B)$ in (\ref{eq2.2}) at $c=0$. In this case, it turns out that the Dirichlet series $D(0,s)$ has the form of a convergent infinite product multiplied by a  quotient of $\zeta$ functions, $\frac{\zeta(s+1)\,\zeta(2s+n)}{\zeta(2s+n+1)}$, in the half plane $\sigma > -n/2.$ Consequently, $D(0,s)$ can be meromorphically continued beyond the half plane $\sigma=\sigma_0$ by inheriting the meromorphic continuation of $\zeta$ in the whole complex plane.
\begin{thm}
\label{Thm 4.1}
If $c=0$, one has
\begin{align*}
     D(0,s)= D_2(0,s) \, \frac{(1-2^{-(s+1)})(1-2^{-(2s+n)})}{1-2^{-(2s+n+1)}} \, \frac{\zeta(s+1)\,\zeta(2s+n)}{\zeta(2s+n+1)}. \end{align*}
Moreover,
\begin{align*}
    D_2(0,s) \, \frac{(1-2^{-(s+1)})(1-2^{-(2s+n)})}{1-2^{-(2s+n+1)}} \ll_{n, \epsilon} 1 \mbox { for } \sigma \geq -n/2 + \epsilon.
\end{align*}
\end{thm}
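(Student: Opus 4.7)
The strategy is to exploit the Euler product decomposition $D(0,s) = \prod_p D_p(0,s)$, which follows from the multiplicativity of $S(q,0)$ in $q$ established at the start of Section \ref{Sec 3}. The bulk of the work is to compute each local factor $D_p(0,s) = \sum_{k\geq 0} S(p^k,0)/p^{k(s+n+1)}$ in closed form as a rational function of $p^{-s}$, using Corollary \ref{Cor 3.4.1} for odd $p$ and Corollary \ref{Cor 3.5.1} for $p=2$, and then to recognize the resulting shape as an Euler factor of $\zeta(s+1)\zeta(2s+n)/\zeta(2s+n+1)$.

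For each odd prime $p$, I would substitute the expression from Corollary \ref{Cor 3.4.1} into $D_p(0,s)$, split into the two summands, and swap the order of summation in the double sum on $(k,l)$ (with $k\geq 1$ and $1\leq l \leq \lfloor k/2\rfloor$) to obtain an outer sum over $l$ with inner sum $\sum_{k\geq 2l} p^{-k(s+1)}$. With the shorthand $u=p^{-(s+1)}$ and $v=p^{-(2s+n)}$, each inner sum is geometric in $u$ and the outer is geometric in $v$. A short algebraic simplification (clearing denominators and collecting) produces
$$D_p(0,s) \;=\; \frac{1 - p^{-(2s+n+1)}}{\bigl(1-p^{-(s+1)}\bigr)\bigl(1-p^{-(2s+n)}\bigr)},$$
which is exactly the $p$-th Euler factor of $\zeta(s+1)\zeta(2s+n)/\zeta(2s+n+1)$. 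Taking the product over odd $p$ and restoring the missing $p=2$ factor of the $\zeta$-quotient by multiplying and dividing by $(1-2^{-(s+1)})(1-2^{-(2s+n)})/(1-2^{-(2s+n+1)})$ yields the first displayed identity.

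For the local factor at $p=2$, the same procedure applied to Corollary \ref{Cor 3.5.1} works, with the parity indicator $\mathds{1}[2\mid k-j]$ handled by substituting $j=k-2m$, $m\geq 1$; this again collapses everything to geometric series in $u$ and $v$. After multiplying by the compensating factor $(1-2^{-(s+1)})(1-2^{-(2s+n)})/(1-2^{-(2s+n+1)})$, the two simple $(1-u)$ and $(1-v)$ factors in the denominator cancel and one is left with a rational expression of the shape
$$\frac{1 - \alpha\, 2^{-(2s+n)}}{1 - 2^{-(2s+n+1)}}, \qquad |\alpha|\leq C_n,$$
where $\alpha$ depends explicitly on $d_{n,J}$. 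In the half-plane $\sigma\geq -n/2+\epsilon$ one has $|2^{-(2s+n)}|\leq 2^{-2\epsilon}$ and $|2^{-(2s+n+1)}|\leq 2^{-1-2\epsilon}<1$, so the denominator is bounded away from $0$ uniformly in $\mathrm{Im}(s)$ and the numerator is bounded since $|d_{n,J}|=2^{(n-1)/2}$ depends only on $n$. This proves the second claim.

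The main obstacle is bookkeeping: the double sum from Corollary \ref{Cor 3.4.1} mixes the summation ranges with indicator conditions, and one must be careful about how the $l=0$ term enters (it contributes nothing to the Legendre-symbol piece when $c=0$, so the odd Euler factor comes out as a clean ratio). No analytic input is needed beyond the geometric series formula; once the sums are indexed correctly and the substitutions $u,v$ are made, the algebraic identification of the $\zeta$-quotient is automatic, and the bound on the $p=2$ factor is a one-line estimate.
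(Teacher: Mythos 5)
Your proposal is correct and follows essentially the same route as the paper: Euler product, closed-form evaluation of the local factors from Corollaries \ref{Cor 3.4.1} and \ref{Cor 3.5.1} via geometric series (you merely swap the order of the $(k,l)$-summation where the paper splits $k$ by parity), identification of the Euler factor $\frac{1-p^{-(2s+n+1)}}{(1-p^{-(s+1)})(1-p^{-(2s+n)})}$, and the same elementary bound on the resulting $2$-adic ratio $\frac{1-2^{-(2s+n)}+d_{n,J}2^{-(2s+n+1)}}{1-2^{-(2s+n+1)}}$ in $\sigma\geq -n/2+\epsilon$. Your care with the $l$-range (taking $1\leq l\leq\lfloor k/2\rfloor$) is in fact what the paper's computation implicitly uses.
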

\begin{proof}
By Corollary \ref{Cor 3.4.1}, we have
\begin{align*}
   \prod_{p\, \neq \, 2} D_p(0,s)
    &=  \prod_{p \, \neq \, 2} \left [ \frac{p^{n-1}-1}{p^{n-1}-p} \sum_{k=0}^{\infty}\frac{1  }{p^{k(s+1)}} - \frac{p-1}{p^{n-1}-p} \sum_{k=0}^{\infty} \frac{p^{(2-n)\lfloor k/2 \rfloor}}{p^{k(s+1)}}  \right] \\
     &= \prod_{p \, \neq \, 2} \left [   \frac{1-p^{-(2s+n+1)} }{(1-p^{-(s+1)})(1-p^{-(2s+n)})}  \right],   
\end{align*}
where the last equality is due to
\begin{align*}
    \sum_{k=0}^{\infty} \frac{p^{(2-n)\lfloor k/2 \rfloor}}{p^{k(s+1)}}&=\sum_{l=0}^{\infty} \frac{p^{(2-n)l}}{p^{2l(s+1)}}+\sum_{l=0}^{\infty} \frac{p^{(2-n)l}}{p^{(2l+1)(s+1)}} 
    =\left ( 1+ \frac{1}{p^{s+1}} \right )  \sum_{l=0}^{\infty} \frac{1}{p^{l(2s+n)}}. \end{align*}
Then, we have
\begin{align*}
     D(0,s)&=\prod_p D_p(0,s) =D_2(0,s) \prod_{ p \, \neq \, 2} D_p(0,s) \\
     &= D_2(0,s) \prod_{p \, \neq \, 2} \left [   \frac{1-p^{-(2s+n+1)} }{(1-p^{-(s+1)})(1-p^{-(2s+n)})}  \right] \\
      &= D_2(0,s) \, \frac{(1-2^{-(s+1)})(1-2^{-(2s+n)})}{1-2^{-(2s+n+1)}} \, \frac{\zeta(s+1)\,\zeta(2s+n)}{\zeta(2s+n+1)}. 
\end{align*}
Moreover, Corollary \ref{Cor 3.5.1} yields
\begin{align*}
    D_2(0,s)&= \sum_{k=0}^{\infty} \frac{S(2^{k},0)}{2^{k(s+n+1)}} 
        = \sum_{k=0}^{\infty} \frac{ 2^{nk-1}  d_{n,J} \, +2^{nk}}{2^{k(s+n+1)}}  \sum_{l=1}^{\lfloor k/2 \rfloor} 2^{l(2-n)} 
           \\&= (1+2^{-(s+1)}) \left (\frac{1+a_{n,J}}{1-2^{-2(s+1)}} -  \frac{a_{n,J}}{1-2^{-(2s+n)}} \right ),
\end{align*}
where $\displaystyle{a_{n,J}=\frac{d_{n,J}}{2^{n-1}-2}}$ and $d_{n,J}$ given in (\ref{eq3.8}) is a constant depending on $n$ and $J$ .
Thus, for $\sigma \geq -n/2 + \epsilon$, we obtain
\begin{align*}
&D_2(0,s) \, \frac{(1-2^{-(s+1)})(1-2^{-(2s+n)})}{1-2^{-(2s+n+1)}} 
    = \frac{(1-2^{-(2s+n)}+d_{n,J}\,2^{-(2s+n+1)})}{1-2^{-(2s+n+1)}} \tag{4.1} \label{eq4.1} \\
    & \ll \frac{(1+2^{n/2-1-\epsilon})(1-2^{-2\epsilon}+d_{n,J}\,2^{-(2\epsilon+1)})}{(1-2^{-(2\epsilon+1)})} \ll_{n, \epsilon, J} 1. 
\end{align*}
\end{proof}
{ \centering
\subsection*{4.2}{The case $c \, \neq \, 0$ and $c^t J^{-1} c =0$}
\par }

For the case  $c \in \mathbb{Z}^n -\{0\}$, we further restrict $D(c,s)$ to such $c^{\prime}$s satisfying $c^t J^{-1} c=0$. We again obtain a quotient of Riemann zeta functions, $\frac{\zeta(2s+n)}{\zeta(2s+n+1)}$, as a factor of $D(c,s)$. More precisely, 
\begin{thm}
\label{Thm 4.2}
If $c \, \neq \, 0$ and $c^t J^{-1} c =0$, one has
\begin{align*}
    D(c,s)=D_2(c,s) \, \frac{1-2^{-(2s+n)}} {1-2^{-(2s+n+1)}} \, \frac{\zeta(2s+n)}{\zeta(2s+n+1)} \, \prod_{ p | c, \, p \, \neq \, 2} D_p(c,s) \left [   \frac{1-p^{-(2s+n)}} {1-p^{-(2s+n+1)}}  \right] .
   \end{align*}
Moreover, for  $\sigma \geq -n/2+\epsilon$, one has
\begin{align*}
    \prod_{ p | c, \, p \, \neq \, 2} D_p(c,s) \left [   \frac{1-p^{-(2s+n)}} {1-p^{-(2s+n+1)}}  \right] \ll |c|^{n/2+\epsilon}
\end{align*}
and 
    \begin{align*}
    D_2(c,s) \, \frac{1-2^{-(2s+n)}} {1-2^{-(2s+n+1)}} \ll_{n, \epsilon, J} 1. 
    \end{align*}
\end{thm}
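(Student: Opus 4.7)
The plan is to mirror the structure of the proof of Theorem 4.1, using Corollary 3.4.2 in place of 3.4.1 and Corollary 3.5.2 in place of 3.5.1. I would begin from the Euler product decomposition
\[
D(c,s) = D_2(c,s) \cdot \prod_{p \mid c,\, p \neq 2} D_p(c,s) \cdot \prod_{p \nmid c,\, p \neq 2} D_p(c,s)
\]
and analyze each piece in turn.

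For the last product, Corollary 3.4.2 applied to odd $p \nmid c$ collapses the indicators: $\mathds{1}_{p^{k-2l}\mid c}$ forces $k = 2l$ (so $k$ must be even) while $\mathds{1}_{p^k\mid c}$ forces $k = 0$. Summing the resulting two-term geometric series in $p^{-(2s+n)}$ yields
\[
D_p(c,s) = \frac{1 - p^{-(2s+n+1)}}{1 - p^{-(2s+n)}}.
\]
Taking the product over all odd $p \nmid c$ and restoring the missing Euler factors at $p=2$ and at $p \mid c$ produces the quotient $\zeta(2s+n)/\zeta(2s+n+1)$ multiplied by the two correction factors appearing in the theorem, which is the asserted identity.

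For the bound on $D_2(c,s) \cdot \frac{1 - 2^{-(2s+n)}}{1 - 2^{-(2s+n+1)}}$, I would substitute Corollary 3.5.2 into the defining series for $D_2(c,s)$, split off the contribution from $2^{nk}\mathds{1}_{2^k \mid c}$, interchange the order of summation in the remaining double sum, and evaluate the two nested geometric series (inner in $k$ with ratio $2^{-2(s+n/2)}$, outer in $j$ with ratio $2^{-(s+1)}$). The result should simplify to a rational function of $2^{-s}$ whose poles cancel against the correction factor, in parallel with equation (4.1), and the uniform bound on $\sigma \geq -n/2 + \epsilon$ can then be read off directly.

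The main obstacle is the bound $\prod_{p \mid c,\, p \neq 2} D_p(c,s) \cdot \frac{1-p^{-(2s+n)}}{1-p^{-(2s+n+1)}} \ll |c|^{n/2 + \epsilon}$. The strategy is to prove a per-prime estimate
\[
\Bigl| D_p(c,s) \cdot \tfrac{1 - p^{-(2s+n)}}{1 - p^{-(2s+n+1)}} \Bigr| \leq C_\epsilon\, p^{v_p(c)(n/2+\epsilon)}
\]
for each odd $p \mid c$, uniformly in $\sigma \geq -n/2 + \epsilon$. To obtain this, I would exploit Corollary 3.4.2 to note that $\mathds{1}_{p^{k-2l}\mid c}$ restricts $l$ to $l \geq \lceil (k - v_p(c))/2 \rceil$, turning the inner $l$-sum into a convergent geometric series in $p^{2-n}$ whose leading exponent is proportional to $v_p(c)$. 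Substituting into $\sum_k S(p^k,c)/p^{k(s+n+1)}$ and summing in $k$ as a geometric series in $p^{-2\epsilon}$ on the line $\sigma = -n/2 + \epsilon$ should deliver the claimed local bound. Taking the product over $p \mid c$ and invoking $\prod_p p^{v_p(c)} \leq \gcd(c_1,\ldots,c_n) \leq |c|$ then yields the global bound $\ll |c|^{n/2+\epsilon}$.
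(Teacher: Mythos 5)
Your proposal is correct and follows essentially the same route as the paper: the paper likewise computes $D_p(c,s)=\frac{1-p^{-(2s+n+1)}}{1-p^{-(2s+n)}}$ for odd $p\nmid c$ from Corollary 3.4.2 (the indicators forcing $k=2l$), reassembles the zeta quotient with correction factors at $p=2$ and $p\mid c$, bounds the $p\mid c$ factors prime-by-prime via the restriction $l\geq\lceil (k-v_p(c))/2\rceil$ and $\prod_{p^{\alpha}\|c}p^{\alpha}\leq|c|$ (its Lemma 4.2.1), and handles $D_2$ from Corollary 3.5.2 by splitting the cases $2\nmid c$ and $2\mid c$ (its Lemma 4.2.2). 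The only cosmetic difference is that in the $2\mid c$ case the paper does not obtain a closed rational function but simply bounds the resulting finite sums directly.
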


\begin{proof}

We recall Corollary \ref{Cor 3.4.2}  and get
\begin{align*}
     S(p^{k},c)
            &=  p^{n k -1}(p-1)\sum_{l=1}^{\lfloor k/2 \rfloor} p^{(2-n)l}\,   \, \mathds{1} _{p^{k-2l}|c} + p^{nk}\,  \mathds{1} _{p^k|c} \\
            &\buildrel \rm p \nmid c \over
                      = (p-1)p^{nk/2+k-1} \, \mathds{1} _{2|k,\, k \geq 1}+ \, \mathds{1} _{k=0},
            \end{align*}  
which implies
\begin{align*}
\prod_{  p \nmid c, \, p \, \neq \, 2} D_p(c,s)&=\prod_{p \nmid c, \, p \, \neq \, 2} \sum_{k=0}^{\infty}\frac{S(p^k,c)}{p^{k(s+n+1)}}=\prod_{ p \nmid c, \, p \, \neq \, 2} \left[1+ \sum_{k=1}^{\infty}\frac{S(p^k,c)}{p^{k(s+n+1)}} \right] \\
&= \prod_{ p \nmid c, \, p \, \neq \, 2} \left[1+ \sum_{l=1}^{\infty}\frac{(p-1)p^{-1} }{p^{l(2s+n)}} \right]=\prod_{  p \nmid c, \, p \, \neq \, 2} \left[ \frac{1-p^{-(1+n+2s)}}{1-p^{-(2s+n)}} \right]\\
& = \frac{\zeta(2s+n)}{\zeta(2s+n+1)} \,   \, \prod_{ p | c, \, p \, \neq \, 2} \left [   \frac{1-p^{-(2s+n)}} {1-p^{-(2s+n+1)}}  \right] \, \frac{1-2^{-(2s+n)}} {1-2^{-(2s+n+1)}}.
\end{align*}
Then, we have
\begin{align*}
    D(c,s)&=\prod_p D_p(c,s) =D_2(c,s) \prod_{ p \nmid c, \, p \, \neq \, 2} D_p(c,s) \, \prod_{ p | c, \, p \, \neq \, 2} D_p(c,s)\\
    &=D_2(c,s) \, \frac{1-2^{-(2s+n)}} {1-2^{-(2s+n+1)}} \, \frac{\zeta(2s+n)}{\zeta(2s+n+1)} \, \prod_{ p | c, \, p \, \neq \, 2} D_p(c,s) \left [   \frac{1-p^{-(2s+n)}} {1-p^{-(2s+n+1)}}  \right] .
\end{align*}
The other parts of the theorem follows by Lemmas \ref{Lem 4.2.1} and \ref{Lem 4.2.2} below. 
\end{proof}

\begin{customlem}{4.2.1}
\label{Lem 4.2.1}
\textit{
If $c \, \neq \, 0$, $c^t J^{-1} c =0$, and  $\sigma \geq -n/2+\epsilon$, one has
\begin{align*}
    \prod_{ p | c, \, p \, \neq \, 2} D_p(c,s) \left [   \frac{1-p^{-(2s+n)}} {1-p^{-(2s+n+1)}}  \right] \ll |c|^{n/2+\epsilon}.
\end{align*}
 }
\end{customlem}
\begin{proof}
It suffices to consider $p^{\alpha}\,||\,c$ for some positive integer $\alpha$. We make use of Corollary \ref{Cor 3.4.2} again to get
\begin{align*}
    D_p(c,s) &= \sum_{k=0}^{\infty} \frac{S(p^k,c)}{p^{k(s+n+1)}}= \sum_{k=0}^{\alpha} \frac{1}{p^{k(s+1)}}+\sum_{k=2+\alpha }^{\infty} \frac{p^{nk-1}(p-1)}{p^{k(n+s+1)}} \sum_{l=\lceil (k-\alpha)/2 \rceil }^{\lfloor k/2 \rfloor}
      p^{(2-n)l},
\end{align*}
where the double summation having inner geometric sum can be simplified to
\begin{align*}
          \frac{1-p^{-1}}{1-p^{(2-n)}} \sum_{k=2+\alpha }^{\infty} \frac{(p^{(2-n)\lceil (k -\alpha)/2 \rceil} - p^{(2-n)(\lfloor k/2 \rfloor+1)})}{p^{k(s+1)}}.
               \end{align*}  
Moreover, we observe that            
\begin{align*}
      \sum_{k=2+\alpha }^{\infty} \frac{p^{(2-n)\lceil (k -\alpha)/2 \rceil} }{p^{k(s+1)}} 
           = \frac{p^{-\alpha(s+1)-2s-n}+p^{-(1+\alpha)(s+1)+2-2n-2s}}{1-p^{-(2s+n)}},
      \end{align*}
  and 
  \begin{align*}
      \sum_{k=2+\alpha }^{\infty} \frac{ p^{(2-n)(\lfloor k/2 \rfloor+1)})}{p^{k(s+1)}} 
            = \frac{p^{-(2s+n)\lceil (2+\alpha)/2 \rceil+2-n}+p^{-(2s+n)\lceil (1+\alpha)/2 \rceil+1-n-s}}{1-p^{-(2s+n)}}. 
  \end{align*}
  Then, we get
  \begin{align*}
    \prod_{p | c, \, p \, \neq \, 2} D_p(c,s) \,
       =\prod_{p^\alpha \,||\, c, \, p \, \neq \, 2 } \, \Big [  \sum_{k=0}^\alpha \frac{1}{p^{k(s+1)}}+ \frac{1-p^{-1}}{1-p^{(2-n)}}  \frac{X_2} {1-p^{-(2s+n)}}  \Big ], \tag{4.2} \label{eq4.2}
     \end{align*}
where
\begin{align*}
    X_2=&p^{-\alpha(s+1)-2s-n}+p^{-(1+\alpha)(s+1)+2-2n-2s}+\\&-p^{-(2s+n)\lceil (2+\alpha)/2 \rceil+2-n}-p^{-(2s+n)\lceil (1+\alpha)/2 \rceil+1-n-s}.
\end{align*}
Thus, in the half-plane $\sigma \geq -n/2+\epsilon$, we have
\begin{align*}
&\prod_{ p | c, \, p \, \neq \, 2} D_p(c,s) \left [   \frac{1-p^{-(2s+n)}} {1-p^{-(2s+n+1)}}  \right]\\
        &=\prod_{p^\alpha \,||\, c, \, p \, \neq \, 2 } \, \Big [  \frac{1-p^{-(2s+n)}}{1-p^{-(2s+n+1)}} \sum_{k=0}^\alpha \frac{1}{p^{k(s+1)}}+ \frac{1-p^{-1}}{1-p^{(2-n)}}  \frac{X_2} {1-p^{-(2s+n+1)}}  \Big ]\\
    &\ll \prod_{p^\alpha \,||\, c, \, p \, \neq \, 2 } \, \Big [   \sum_{k=0}^\alpha p^{k(n/2+\epsilon)}+ p^{\alpha(n/2-\epsilon)}+p^{\alpha(n/2+\epsilon)} +p^{\alpha(-n/2+2-\epsilon)}+p^{\alpha(-n+3-\epsilon)}  \Big ].
\end{align*} 
By taking the product over all $p$ dividing $c$, then expanding terms, and noting $\prod_{p^{\alpha}\,||\, c}p^{\alpha} < |c|$, we obtain
\begin{align*}
    \prod_{ p | c, \, p \, \neq \, 2} D_p(c,s) \left [   \frac{1-p^{-(2s+n)}} {1-p^{-(2s+n+1)}}  \right] \ll P_1(|c|) \ll |c|^{n/2+\epsilon},
\end{align*}
 where $P_1(x)$ is a sum of power functions with leading power $n/2+\epsilon$.
\end{proof}
 \begin{customlem}{4.2.2}
 \label{Lem 4.2.2}
 \textit{
 If $c \, \neq \, 0$, $c^t J^{-1} c =0$, and  $\sigma \geq -n/2+\epsilon$, one has
\begin{align*}
    D_2(c,s) \, \frac{1-2^{-(2s+n)}} {1-2^{-(2s+n+1)}} \ll_{n, \epsilon, J} 1.
    \end{align*} }
 \end{customlem} 
 \begin{proof}
 We recall Corollary \ref{Cor 3.5.2}, which gives
\begin{align*}
    S(2^{k},c)= &\sum_{j=0}^{k-2}  2^{j(n/2-1)+k(n/2+1)-1} d_{n,J} \,  \mathds{1} _{2^j|c}\, \mathds{1} _{2|k-j} \,  + 2^{nk} \, \mathds{1} _{2^{k}|c},
\end{align*}
where $d_{n,J}$ is defined in (\ref{eq3.8}). We consider two cases: $2 \nmid c$ and $2 |c$.
\begin{itemize}
    \item If $2 \nmid c$, we have
    \begin{align*}
    S(2^{k},c)
        &=  2^{nk/2} \,  \mathds{1} _{2|k} \, d_{n,J} \, 2 ^{k -1} \mbox { for } k \geq 2, \end{align*}
and so
    \begin{align*}
        D_2(c,s)=\sum_{k=0}^{\infty} \frac{S(2^{k},c)}{2^{k(s+n+1)}}
             = \frac{1-2^{-(2s+n)}+ d_{n,J} \, 2 ^{-(2s+n+1)}}{1-2^{-(2s+n)}}.
    \end{align*}
   Thus, for $\sigma  \geq -n/2 + \epsilon$, we get
    \begin{align*}
             D_2(c,s) \frac{1-2^{-(2s+n)}}{1-2^{-(2s+n+1)}}
              &= \frac{1-2^{-(2s+n)}+ d_{n, J} \, 2 ^{-(2s+n+1)}}{1-2^{-(2s+n+1)}} \\
        &\ll \frac{1-2^{-2\epsilon}+ d_{n, J} \, 2 ^{-(1+2\epsilon)}}{1-2^{-(1+2\epsilon)}} \ll_{n, \epsilon, J} 1. 
           \end{align*}
    \item If $2 | c$, i.e., $2^\alpha \,||\, c$, for some positive integer $\alpha$,  we have
    \begin{align*}
        D_2(c,s)=\sum_{k=0}^{\infty} \frac{S(2^{k},c)}{2^{k(s+n+1)}}  
        =\sum_{k=2}^{\alpha+2}\sum_{j=0}^{k-2}  \frac{2^{n(k+j)/2}}{2^{k(s+n+1)}} \,  \mathds{1} _{2|k-j} \, d_{n, J} \, 2 ^{k -j -1}+\sum_{k=0}^{\alpha} \frac{1}{2^{k(s+1)}},
    \end{align*}
    where the double summation is equal to
    \begin{align*}
              \frac{d_{n, J} \, }{2^{n-1}-2} \left (\sum_{k=2}^{\alpha+2} \frac{1}{2^{k(s+1)}} - \sum_{k=2}^{\alpha+2} \frac{2^{(2-n)\lfloor k/2 \rfloor}}{2^{k(s+1)}}  \right ).
    \end{align*}
    We also observe that 
    \begin{align*}
        \sum_{k=2}^{\alpha+2} \frac{2^{(2-n)\lfloor k/2 \rfloor}}{2^{k(s+1)}} &= \sum_{l=1}^{\lfloor (\alpha+2)/2 \rfloor} \frac{2^{(2-n)l}}{2^{2l(s+1)}} +\sum_{l=1}^{\lfloor (\alpha+1)/2 \rfloor} \frac{2^{(2-n)l}}{2^{(2l+1)(s+1)}} \\
        &= \sum_{l=1}^{\lfloor (\alpha+2)/2 \rfloor} \frac{1}{2^{l(2s+n)}} + \sum_{l=1}^{\lfloor (\alpha+1)/2 \rfloor} \frac{2^{-(s+1)}}{2^{l(2s+n)}}
         \leq \frac{1+2^{-(s+1)}}{1-2^{-(2s+n)}}. 
    \end{align*}
    \end{itemize}
          Hence,
    \begin{align*}
        &D_2(c,s) \frac{1-2^{-(2s+n)}}{1-2^{-(2s+n+1)}} \leq \\
        & \leq \frac{d_{n,J} }{2^{n-1}-2} \left (\frac{1+2^{n/2-1-\epsilon}}{1-2^{-\epsilon}} + 1+2^{n/2-1-\epsilon}  \right )\frac{1}{1-2^{-(2\epsilon+1)}}+\frac{1+2^{n/2-1-\epsilon}}{(1-2^{-\epsilon})(1-2^{-(2\epsilon+1)})} \\
        &\ll_{n, \epsilon, J } 1.
        \end{align*}
\end{proof}
{\centering 
\subsection*{4.3}{The case $c \, \neq \, 0$ and $c^t J^{-1} c \, \neq \, 0$}
\par }

Lastly, 
it is also interesting to see that $D(c,s)$, where $c \, \neq \, 0$ and $c^t J^{-1} c \, \neq \, 0$, can be written in terms of a quotient of Dirichlet L-functions, $\frac{L(s+n/2+1/2,\,\kappa_{c,J})}{L(2s+n+1,\, \kappa_{c,J})}$, of character $\kappa_{c,J} $ defined as in Theorem \ref{Thm 4.3}. Moreover, the character $\kappa_{c,J} $ turns out to be a product of Dirichlet characters, which will be discussed in details in Lemma \ref{Lem 6.3} of Section 6.
\begin{thm}
\label{Thm 4.3}
If $c \, \neq \, 0$ and $ c^t J^{-1} c \, \neq \, 0$, one has
\begin{align*}
    D(c,s)   = \frac{L(s+n/2+1/2,\kappa_{c,J})}{L(2s+n+1,\kappa_{c,J})} \frac{D_2(c,s)}{\rho_{c,J}(2)} \prod_{p|c, \, p \, \neq \, 2} \frac{D_p(c,s)}{\rho_{c,J}(p)} \prod_{p\nmid c, \, p| c^t J^{-1} c , \, p \, \neq \, 2} \frac{D_p(c,s)}{\rho_{c,J}(p)} ,
\end{align*}
where 
\begin{align*}
    \kappa_{c,J} (p)&=i^{(n-1)( p-1)^2/4}\,\left(\frac{c^t J^{-1} c\, \det J}{p} \right )_K, \mbox { and } \rho_{c,J}(p)= 1+ \frac{\kappa_{c,J} (p) }{p^{s+n/2+1/2}}, \\ \mbox { with } \left(\frac{c^t J^{-1} c\, \det J}{\cdot} \right)_K &\mbox { is the Kronecker symbol. }.
\end{align*}
Moreover, for $\sigma \geq -n/2+\epsilon$, one has
\begin{align*}
    \prod_{p|c, \, p \, \neq \, 2} \frac{D_p(c,s)}{\rho_{c,J}(p)} &\leq  |c|^{3n/2-7/2},\\
    \prod_{p\nmid c, \, p| c^t J^{-1} c , \, p \, \neq \, 2} \frac{D_p(c,s)}{\rho_{c,J}(p)} &\leq |c|^{1/2+\epsilon},\\
    \mbox { and }
     \frac{D_2(c,s)}{\rho_{c,J}(2)}  &\ll_{n, \epsilon} 1      .
     \end{align*}
\end{thm}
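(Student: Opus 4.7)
The plan is to mimic the Euler-product strategy of Theorem \ref{Thm 4.2}, but now with a real Dirichlet character $\kappa_{c,J}$ twisting the local factors at the "good" primes. Using the multiplicativity of $S(q,c)$, I write $D(c,s) = \prod_p D_p(c,s)$ and partition the primes into the generic set $\mathcal{G} = \{p \neq 2 : p \nmid c,\ p \nmid c^tJ^{-1}c\}$ and its complement. The crux of Step 1 is to show that for $p \in \mathcal{G}$ the identity $D_p(c,s) = \rho_{c,J}(p)$ holds exactly. Substituting the four-term formula of Theorem \ref{Thm 3.4} into $S(p^k,c)$, the indicators $\mathds{1}_{p^{k-(2l+1)}|c}$, $\mathds{1}_{p^{k-2l}|c}$, $\mathds{1}_{p^k|c}$ on the $c$-side, together with $\mathds{1}_{p^{2(k-l)-2}|c^tJ^{-1}c}$, $\mathds{1}_{p^{2(k-l)-1}\,||\,c^tJ^{-1}c}$, $\mathds{1}_{p^{2(k-l)}|c^tJ^{-1}c}$ on the quadratic-form side, force every sum to vanish except at $k = 0$ (giving $S(1,c) = 1$) and at $k = 1$, $l = 0$ (giving $S(p,c) = \mu \left(\frac{c^tJ^{-1}c}{p}\right)_L = p^{n/2+1/2}\kappa_{c,J}(p)$), so $D_p(c,s) = 1 + \kappa_{c,J}(p) p^{-s-n/2-1/2} = \rho_{c,J}(p)$.

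Step 2 reassembles the Euler product. Writing
\begin{equation*}
D(c,s) = \Big(\prod_p \rho_{c,J}(p)\Big)\cdot \frac{D_2(c,s)}{\rho_{c,J}(2)} \prod_{p|c,\,p\neq 2}\frac{D_p(c,s)}{\rho_{c,J}(p)} \prod_{p\nmid c,\,p|c^tJ^{-1}c,\,p\neq 2}\frac{D_p(c,s)}{\rho_{c,J}(p)},
\end{equation*}
I reduce to identifying $\prod_p \rho_{c,J}(p)$ with $L(s+n/2+1/2,\kappa_{c,J})/L(2s+n+1,\kappa_{c,J})$. Since $n$ is odd, a brief mod-4 computation of $(n-1)(p-1)^2/4$ shows $i^{(n-1)(p-1)^2/4} \in \{\pm 1\}$, so $\kappa_{c,J}(p) \in \{-1,0,+1\}$ extends to a real Dirichlet character. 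The identity $1 + \chi(p) x = (1 - x^2)/(1 - \chi(p) x)$, valid at primes where $\chi(p)^2 = 1$, expresses each local factor $\rho_{c,J}(p)$ as a ratio, and taking the global product assembles into the L-function quotient.

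Step 3 bounds the three correction products on the half-plane $\sigma \geq -n/2+\epsilon$, following the templates of Lemmas \ref{Lem 4.2.1} and \ref{Lem 4.2.2}. For odd $p$ with $p^{\alpha}\,||\,c$, I substitute Theorem \ref{Thm 3.4} into $D_p(c,s)$, truncate each of the four $k$-summations at the effective upper limit forced by the indicators (a function of $\alpha$ and $v_p(c^tJ^{-1}c)$), and evaluate the remaining geometric sums as in the proof of Lemma \ref{Lem 4.2.1}. Dividing by $\rho_{c,J}(p)$ and multiplying over such $p$ yields the claimed bound $|c|^{3n/2-7/2}$. For primes $p \nmid c$ with $p | c^tJ^{-1}c$, only $O(v_p(c^tJ^{-1}c))$ many $k$ contribute nontrivially; combining this with $|c^tJ^{-1}c| \ll_J |c|^2$ and a divisor-style argument produces the $|c|^{1/2+\epsilon}$ bound. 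The $p=2$ estimate parallels Lemma \ref{Lem 4.2.2}, now using Theorem \ref{Thm 3.5} with both the $d_{n,j,k,J}$ and $e_{n,j,k,J}$ contributions of Lemmas \ref{Lem 3.5.1} and \ref{Lem 3.5.2}.

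The chief technical obstacle lies in Step 3, specifically in the $p|c$ case: Theorem \ref{Thm 3.4} has four separate sums, distinguished by the parity of $k-j$ and by constraints coupling $v_p(c)$ to $v_p(c^tJ^{-1}c)$ through the indicators. Unlike in Lemma \ref{Lem 4.2.1}, where $c^tJ^{-1}c = 0$ freed the second and third sums from any $c^tJ^{-1}c$-based divisibility constraint, here those sums are genuinely nontrivial, and their interaction with the first sum (which carries a Legendre symbol twist) must be carefully unwound to extract the sharp polynomial growth rate $3n/2-7/2$ uniformly in $c$.
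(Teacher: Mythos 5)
Your proposal is correct and follows essentially the same route as the paper: compute $D_p(c,s)=\rho_{c,J}(p)$ at the generic primes via Theorem \ref{Thm 3.4} (where the divisibility indicators kill all terms with $k\ge 2$), reassemble the Euler product into the $L$-function quotient, and then bound the finitely many exceptional local factors by the geometric-sum arguments of the style of Lemmas \ref{Lem 4.2.1}--\ref{Lem 4.2.2} (the paper isolates these as Lemmas \ref{Lem 4.3.1}, \ref{Lem 4.3.2}, \ref{Lem 4.3.3}). Your remark that the denominator should really be the $L$-function of $\kappa_{c,J}^2$ (the principal character) rather than of $\kappa_{c,J}$ is actually more careful than the paper's own statement, but this does not affect the analytic conclusions in the region of interest.
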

\begin{proof}
By applying Theorem \ref{Thm 3.4} with $p \nmid c$  and $p\nmid c^t J^{-1} c$, we have
\begin{align*}
    D_p(c,s)&=\sum_{k=0}^{\infty} \frac{S(p^k,c)}{p^{k(s+n+1)}} = 1+ \frac{ i^{(n-1)( p-1)^2/4}}{p^{n/2+s+1/2}} \,\left(\frac{ c^t J^{-1} c\, \det J}{p} \right )_K . 
\end{align*}
By setting $\kappa_{c,J} (p):=i^{(n-1)( p-1)^2/4}\,\left(\frac{ c^t J^{-1} c \, \det J}{p} \right )_K $ and $\rho_{c,J}(p):= 1+ \frac{\kappa_{c,J} (p) }{p^{s+n/2+1/2}}$ , we get
\begin{align*}
    \prod_{p\nmid c, \, p \nmid c^t J^{-1} c,  \, p \, \neq \, 2} D_p(c,s)  &=\prod_{p \nmid c, \, p\nmid c^t J^{-1} c, \, p \, \neq \, 2 } \, \rho_{c,J}(p)\\
    &= \frac{1}{\rho_{c,J}(2)} \, \prod_{p} \rho_{c,J}(p)   \prod_{p|c, \, p \, \neq \, 2} \frac{1}{\rho_{c,J}(p)} \, \prod_{p\nmid c, \, p| c^t J^{-1} c , \, p \, \neq \, 2} \frac{1}{\rho_{c,J}(p)} \, \\ 
    &= \frac{1}{\rho_{c,J}(2)} \, \frac{L(s+n/2+1/2,\kappa_{c,J})}{L(2s+n+1,\kappa_{c,J})}  \prod_{p|c, \, p \, \neq \, 2} \frac{1}{\rho_{c,J}(p)} \, \prod_{p\nmid c, \, p| c^t J^{-1} c , \, p \, \neq \, 2} \frac{1}{\rho_{c,J}(p)} ,
\end{align*}
which yields
\begin{align*}
    D(c,s)
    &= D_2(c,s) \prod_{p|c, \, p \, \neq \, 2} D_p(c,s) \prod_{p\nmid c, \, p| c^t J^{-1} c , \, p \, \neq \, 2} D_p(c,s) \prod_{p\nmid c, \, p \nmid c^t J^{-1} c,  \, p \, \neq \, 2} D_p(c,s)\\
    &=\frac{D_2(c,s)}{\rho_{c,J}(2)}\,\frac{L(s+n/2+1/2,\kappa_{c,J})}{L(2s+n+1,\kappa_{c,J})}  \prod_{p|c, \, p \, \neq \, 2} \frac{D_p(c,s)}{\rho_{c,J}(p)} \prod_{p\nmid c, \, p| c^t J^{-1} c , \, p \, \neq \, 2} \frac{D_p(c,s)}{\rho_{c,J}(p)}. 
\end{align*}
Hence, the remaining parts of the theorem are justified by the following Lemmas \ref{Lem 4.3.1}, \ref{Lem 4.3.2}, and \ref{Lem 4.3.3}.
\end{proof}
\begin{customlem}{4.3.1}
\label{Lem 4.3.1}
\textit{
If $c \, \neq \, 0$, $c^t J^{-1} c \, \neq \, 0$, and  $\sigma \geq -n/2+\epsilon$, one has
\begin{align*}
    \prod_{p|c, \, p \, \neq \, 2} \frac{D_p(c,s)}{\rho_{c,J}(p)} \ll |c|^{3n/2-7/2}.
\end{align*}
}
\end{customlem}
\begin{proof}
Since $p|c$ implies $p^2|c^t J^{-1} c$, we consider $p^\alpha \,||\, c$ and $p^{2\alpha+\beta} \,||\, c^t J^{-1} c$ for some integers $\alpha \geq 1$ and $\beta \geq 0$. Applying Theorem \ref{Thm 3.4} to $p^\alpha \,||\, c$ and $p^{2\alpha+\beta} \,||\, c^t J^{-1} c$, we get  
\begin{align*}
    D_p(c,s)&=\sum_{k=0}^{\infty} \frac{S(p^k,c)}{p^{k(s+n+1)}} = X_3+X_4+X_5+X_6,
\end{align*}
where
\begin{align*}
    X_3
    &= \sum_{k=\alpha+\beta+1}^{2\alpha+\beta+1} \frac{p^{n(\alpha+\beta/2+1/2)-2\alpha-\beta-3/2}\,i^{(n-1)( p-1)^2/4} \,\left(\frac{ \det J}{p} \right )_K }{p^{k(n+s-1)}}  \left (\frac{c^t J^{-1} c /p^{2\alpha+\beta}}{p} \right )_K \, \mathds{1} _{2|\beta} ,\\
    X_4&=\sum_{k=0}^\alpha \frac{1}{p^{ k(s+1)}}, \quad
    X_5=\sum_{k=\alpha+\beta+1}^{2\alpha+\beta+1} \frac{-p^{n(1/2+\alpha+\beta/2)-2-2\alpha-\beta}  }{p^{k(n+s-1)}}   \, \mathds{1} _{2\nmid \beta}, \\ \mbox{ and } X_6&= \sum_{k=\mbox{\tiny{max}}\{\lceil 1+\alpha+\beta/2 \rceil,\alpha+2\}}^{2\alpha+\beta} \frac{p^{ -1}(p-1)}{p^{k(s+1)}}   \sum_{l=\mbox{\tiny{max}}\{\lceil k-\alpha-\beta/2 \rceil,\lceil\frac{k-\alpha}{2}\rceil \}}^{\lfloor \frac{k}{2}\rfloor}  p^{(2-n)l}. 
\end{align*}
For $\sigma \geq -n/2+\epsilon$, we observe that 
    \begin{align*}
    X_3
    &\ll \sum_{k=\alpha+\beta+1}^{2\alpha+\beta+1} \frac{p^{(2\alpha+\beta)(n-5/2)}}{p^{k(n/2-1+\epsilon)}}  , \quad 
    X_4\ll\sum_{k=0}^\alpha p^{k(n/2-1-\epsilon)}, \\
    X_5
    &\ll \sum_{k=\alpha+\beta+1}^{2\alpha+\beta+1} \frac{p^{(2\alpha+\beta)(n-3)}}{p^{k(n/2-1+\epsilon)}}  , \mbox{ and } X_6= \sum_{k=\mbox{\tiny{max}}\{\lceil 1+\alpha+\beta/2 \rceil,\alpha+2\}}^{2\alpha+\beta}p^{k(n/2-1-\epsilon)} . 
\end{align*}
Moreover,
\begin{align*}
    \rho_{c,J}(p)^{-1} = \left ( 1+ \frac{\chi_c \eta_c}{p^{s+n/2+1/2}}\right )^{-1} 
    &\ll \left ( 1- \frac{1}{p^{\epsilon+1/2}}\right )^{-1}  = \frac{p^{\epsilon+1/2}}{p^{\epsilon+1/2} -1}, \\ \mbox { and } (p^{\epsilon+1/2} -1)^{-1} &< 1 \mbox { for } p \geq 5.
\end{align*}              
Hence, 
\begin{align*}
    \prod_{p|c, \, p \, \neq \, 2} \frac{D_p(c,s)}{\rho_{c,J}(p)} \ll &\prod_{p|c, \, p \, \neq \, 2} p^{\epsilon+1/2}\Big [ \sum_{k=\alpha+\beta+1}^{2\alpha+\beta+1} \frac{p^{(2\alpha+\beta)(n-5/2)}}{p^{k(n/2-1+\epsilon)}} \\ &+ \sum_{k=0}^\alpha p^{k(n/2-1-\epsilon)} + \sum_{k=\mbox{\tiny{max}}\{\lceil 1+\alpha+\beta/2 \rceil,\alpha+2\}}^{2\alpha+\beta}p^{k(n/2-1-\epsilon)} \Big ],
\end{align*}
which is bounded by a sum of power functions of $|c|$ with leading power $3n/2-7/2$. 
\end{proof}
\begin{customlem}{4.3.2}
\label{Lem 4.3.2}
\textit{
If $c \, \neq \, 0$, $c^t J^{-1} c \, \neq \, 0$, and  $\sigma \geq -n/2+\epsilon$, one has
\begin{align*}
    \prod_{p\nmid c, \, p| c^t J^{-1} c , \, p \, \neq \, 2} \frac{D_p(c,s)}{\rho_{c,J}(p)} \ll 
    |c|^{1/2+\epsilon}. 
    \end{align*}
}
\end{customlem}
\begin{proof}
Since $p \nmid c$ and $p|c^t J^{-1} c$, it suffices to consider $p^\alpha\,||\,c^t J^{-1} c$ for some positive integer $\alpha$. Given $p^\alpha\,||\,c^t J^{-1} c$, Theorem \ref{Thm 3.4} yields 
\begin{align*}
    D_p(c,s)
    &= 1+ \frac{ i^{(n-1)( p-1)^2/4}}{p^{n/2+s+1/2+\alpha n/2+\alpha s}} \,\left(\frac{ \det J}{p} \right ) \, \left(\frac{c^t J^{-1} c/p^\alpha}{p} \right ) \,  \, \mathds{1} _{2|\alpha}+ \\
    &\quad + \frac{\, \mathds{1} _{2 \nmid \alpha}}{p^{n/2+s+1+\alpha n/2+\alpha s}}+\sum_{k=\lceil 1+\alpha/2 \rceil }^\alpha \frac{ (1-p^{-1})}{p^{k(n/2+s)}} \,   \, \mathds{1} _{2 | k}. \tag{4.3} \label{eq4.3}
\end{align*}
We observe that the two middle terms in (\ref{eq4.3}) are bounded by $p^{\alpha(-\epsilon)}$ while the last one is bounded by $\sum_{l=\lceil \frac{\lceil 1+\alpha/2 \rceil}{2} \rceil }^{\lfloor \alpha/2 \rfloor} p^{l(-2\epsilon)}$. Thus, we obtain
\begin{align*}
    \prod_{p\nmid c, \, p| c^t J^{-1} c , \, p \, \neq \, 2} \frac{D_p(c,s)}{\rho_{c,J}(p)} \ll &\prod_{p|c, \, p \, \neq \, 2} p^{\epsilon+1/2}\Big [ 1+ p^{\alpha(-\epsilon)} + \sum_{l=\lceil \frac{\lceil 1+\alpha/2 \rceil}{2} \rceil }^{\lfloor \alpha/2 \rfloor} p^{l(-2\epsilon)}\Big ],
\end{align*}
which is bounded by a sum of power functions of $|c|$ with leading power $1/2+\epsilon$.
\end{proof}
\begin{customlem}{4.3.3}
\label{Lem 4.3.3} 
\textit{
If $c \, \neq \, 0$, $c^t J^{-1} c \, \neq \, 0$, and  $\sigma \geq -n/2+\epsilon$, one has 
\begin{align*}
     \frac{D_2(c,s)}{\rho_{c,J}(2)}  \ll_{n, \epsilon} 1.
\end{align*} }
\end{customlem}
\begin{proof}

We recall Theorem \ref{Thm 3.5} to get
\begin{align*}
    S(2^{k},c)= &\sum_{j=0}^{k-2}  2^{j(n/2-1)+k(n/2+1)-1} d_{n,j,k,J} \,  \mathds{1} _{2^j|c}\, \mathds{1} _{2|k-j} \,    +\\&+ \sum_{j=0}^{k-2}  2^{j(n/2-1)+k(n/2+1)-3/2} \, \left (\frac{2}{\det J} \right )_L\, e_{n,j,J} \,  \mathds{1} _{2^j|c} \, \mathds{1} _{2\nmid k-j} \, \mathds{1} _{2^{k-j-3} \,||\, c_{j,J} } + \\ &+2^{nk} \, \mathds{1} _{2^{k}|c}, \tag{4.4} \label{eq4.4}
\end{align*}
and we examine 
\begin{align*}
    D_2(c,s)=\sum_{k=0}^{\infty} \frac{S(2^{k},c)}{2^{k(n+s+1)}}
\end{align*}
by computing contribution of $S(2^{k},c)$ (\ref{eq4.4}) to $D_2(c,s)$. For the first summation of $S(2^{k},c)$ in (\ref{eq4.4}), we proceed by considering the divisibility by $2$ of $c$.
    \begin{itemize}
        \item If $2 \nmid c$, then $j=0$ and 
       it equals to  $2^{k(n/2+1)-1} \,d_{n,0,J}\,  \mathds{1} _{2|k, \, k \geq 2}$. We further consider $2 \nmid c^t J^{-1} c$ to get its contribution to $D_2(c,s)$ as
       \begin{align*} 
       \frac{-1}{2^{2s+1+n/2}}  \, \chi_4(c^t J^{-1} c) \,  \sin \left(\frac{(2m-n) \pi}{4}\right) \, \mathds{1} _{c^t J^{-1} c\,=\,\pm1} \ll \frac{1}{2^{-n/2+1+2 \epsilon}} \ll_{n, \epsilon} 1.
       \end{align*}
       Otherwise, we assume $2^{\alpha} \,||\, c^t J^{-1} c$, where $\alpha \in \mathbb{Z}_{\geq 1}$. Then, the contribution equals to
        \begin{align*}
           &\sum_{l=1}^{\lfloor \alpha/2 \rfloor}  \frac{2^{n/2-1}}{2^{l(2s+n)}} \cos \left(\frac{(2m-n) \pi}{4}\right) - \frac{2^{n(\alpha+1)/2} \,  \mathds{1} _{2|(\alpha+1)}  2^{\alpha}}{2^{(\alpha+1)(n+s+1)}}2^{n/2} \cos \left(\frac{(2m-n) \pi}{4}\right)
           +\\&\quad - \frac{2^{n(\alpha+2)/2} \,  \mathds{1} _{2|(\alpha+2)}  2^{\alpha+1}}{2^{(\alpha+2)(n+s+1)}} \, \chi_4(c^t J^{-1} c/2^{\alpha}) \,2^{n/2} \sin \left(\frac{(2m-n) \pi}{4}\right) \ll
           \\&\ll \frac{2^{n/2-1}}{1-2^{-2 \epsilon}}+ 2^{n/2-\epsilon(\alpha+1)-1} \ll_{n, \epsilon, \alpha} 1.
       \end{align*}
               \item If $2|c$, we assume $2^{\alpha} \,||\,c$ and $2^{2 \alpha +\beta} \,||\, c^t J^{-1} c$, for some integers $\alpha \geq 1, \beta \geq 0.$ Then, the condition $2^j|c$ makes the contribution of the first summation of $S(2^{k},c)$ in (\ref{eq4.4}) to $D(c,s)$ equal to
        \begin{align*}
           \sum_{k=2}^{\alpha+2} \sum_{j=0}^{k-2} \frac{  2^{j(n/2-1)+k(n/2+1)-1} d_{n,j,k,J} \,  \mathds{1} _{2|k-j}}{2^{k(n+s+1)}}, \tag{4.5} \label{eq4.5}
       \end{align*}
    By (\ref{eq3.8}), $d_{n,j,k,J} \leq 2^{(n-1)/2}$ and so the contribution to $D(c,s)$ (\ref{eq4.5}) is bounded by
    \begin{align*}
        \frac{(\alpha+1) 2^{-n/2-7/2}}{1-2^{-\epsilon}} \ll_{n, \epsilon, \alpha} 1.
    \end{align*}
    \end{itemize}
    
            Similarly, we return to $S(2^{k},c)$ in (\ref{eq4.4}) and consider its second summation under two cases, $2 \nmid c$ or $2|c$. By noting that $e_{n,0,J} \leq 2^{(n+1)/2}$ as seen in (\ref{eq3.7}), the former case yields a contribution of 
         \begin{align*}
        \sum_{l=1}^{\infty} \,  e_{n,0,J}\, 2^{-3/2-n/2-s}\frac{1}  {2^{l(2s+n)}} 
           &\ll \frac{2^{(n+1)/2}}{1-2^{-2\epsilon}} 2^{-3/2-3\epsilon} \ll_{n, \epsilon } 1,
    \end{align*}
    while the latter one with $2^{\alpha} \,||\,c$ and $2^{2 \alpha +\beta} \,||\, c^t J^{-1} c$, for some integers $\alpha \geq 1, \beta \geq 0,$ gives us
    \begin{align*}
    \sum_{k=2}^{\alpha+2}\sum_{j=0}^{k-2}  \frac{2^{n(k+j)/2}}{2^{k(s+n+1)}} \,  \mathds{1} _{2\nmid k-j} \, e_{n,j,J}\, 2 ^{k -j -3/2} \, \mathds{1} _{2^{k-j-3}\,||\,c_{2,j,J}}=0  ,
         \end{align*}
         by observing that  $e_{n,j,J} \, \neq \, 0$ if and only if $2^{k-j-3}\,||\,c_{2,j,J}$, which yields $k+j-3=2\alpha+\beta,$ and so $j=2\alpha+\beta-k+3$. Also, the conditions $0\leq j \leq k-2$ and $2 \leq k \leq  \alpha +2$ imply $\beta \leq -1$, which is a contradiction.
         
         We complete checking the boundedness of $D_2(c,s)$ by observing that the infinite sum $\sum_{k=0}^{\infty} \frac{\, \mathds{1} _{2^k|c}}{2^{k(s+1)}}$, which comes from the last term of $S(2^{k},c)$ in (\ref{eq4.4}), is identically to $1$ if $2 \nmid c$. Otherwise, with $2^{\alpha} \,||\,c$, for some $\alpha \in \mathbb{Z}_{\geq 1}$, we have
         \begin{align*}
        \sum_{k=0}^{\infty} \frac{\, \mathds{1} _{2^k|c}}{2^{k(s+1)}}=\sum_{k=0}^{\alpha} \frac{1}{2^{k(s+1)}} \ll \sum_{k=0}^{\alpha} 2^{k(n/2-1-\epsilon)} \ll_{\epsilon, n, \alpha} 1.
    \end{align*}
    
Lastly, we prove the theorem by considering $\rho_{c,J}(p) $ and observing that
\begin{align*}
    \rho_{c,J}(p)^{-1} 
     \ll\left ( 1- \frac{1}{p^{\epsilon+1/2}}\right )^{-1} 
    \leq \frac{2^{\epsilon+1/2}}{2^{\epsilon+1/2} -1} \ll_{\epsilon} 1. 
\end{align*}
\end{proof}

\section{The Archimedean factor }
\label{Sec 5}
We recall that the Archimedean factor $g(c,s)$ is the Mellin transform in the second variable applied to the Fourier transform in the first variable of two-variable smooth function. That is, it is defined by
\begin{align*}
    g(c,s)=  \int_{0}^\infty \int_{\mathbb{R}^n} \omega(x)\,h\left(\frac{F(x)}{y},y\right)e\left(-\frac{c\cdot x}{y}\right)  \,dx \, y^s \, \frac{dy}{y}.
\end{align*}
For the purpose of this paper, considering a complement case in the work of Getz, we subsequently apply
Theorems 4.1 and 4.2 in \cite{G} to the smallest and the most basic number field, the field $\mathbb{Q}$ of rational numbers, and obtain the following propositions. 
\begin{prop}
\label{Prop 5.1}
    Let $s=\sigma + it$, $c \, \neq \, 0$, and $N$ be any positive integer. For  $N>  \sigma_1 > \sigma > \sigma_2 > -n/2$, $g(c,s)$ converges absolutely, and      \begin{align*}
     \qquad \qquad  \qquad  \qquad  g(c,s) &\ll_{\omega, \Phi} \frac{|c|^{-N}}{\sqrt{\sigma_1^2 +t^2+1}} \left ( \frac{1}{\sigma_2+n/2} - \frac{1}{\sigma_1-N} \right) .  \qquad  \qquad \qquad \qquad \square 
    \end{align*}
\end{prop}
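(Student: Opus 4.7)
The plan is to invoke Theorems 4.1 and 4.2 of \cite{G} specialized to the field $\mathbb{Q}$. These theorems give precisely the required decay estimates for integrals of mixed Fourier--Mellin type, and their hypotheses---$\omega \in C_c^\infty(\mathbb{R}^n)$, $\Phi \in \mathcal{S}(\mathbb{R}^2)$ with $\Phi(x,0) = 0$, and the strip $-n/2 < \sigma < N$---are all either assumed in Proposition~\ref{Prop 2.1} or built into the statement of the present proposition. The bulk of the work is to rewrite $g(c,s)$ in a form to which those theorems directly apply, and then to track the Archimedean constants in the specialization to the single infinite place of $\mathbb{Q}$.

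Concretely, I would recast the definition as
\[
g(c,s) = \int_0^\infty I(c,y)\, y^s\, \frac{dy}{y}, \qquad I(c,y) := \int_{\mathbb{R}^n} \omega(x)\, h\!\left(\tfrac{F(x)}{y},\,y\right) e\!\left(-\tfrac{c \cdot x}{y}\right) dx,
\]
and then extract the three factors in the asserted bound by three independent mechanisms. The factor $|c|^{-N}$ comes from $N$-fold integration by parts in $x$ against the oscillatory kernel $e(-c\cdot x/y)$; the compact support of $\omega$ and the Schwartz character of $\Phi$ guarantee that the resulting integrand remains dominated by a finite combination of seminorms of $\omega$ and $\Phi$, with each step producing a factor of $y/|c|$ that is absorbed into the outer Mellin variable. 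The factor $(\sigma_1^2 + t^2 + 1)^{-1/2}$ comes from a single integration by parts in $y$ against $y^s$, which contributes $|s|^{-1}$ and is then dominated by $(\sigma_1^2 + t^2 + 1)^{-1/2}$. Finally, the bracket $\tfrac{1}{\sigma_2 + n/2} - \tfrac{1}{\sigma_1 - N}$ arises by splitting the $y$-integral at $y=1$: on $(0,1]$, the vanishing $\Phi(x,0) = 0$ forces $h(x,y) = O(y)$ as $y \to 0$, and a contour shift to $\mathrm{Re}(s) = \sigma_2 > -n/2$ produces the $(\sigma_2 + n/2)^{-1}$ contribution; on $[1,\infty)$, Schwartz decay of $h$ in both arguments together with a shift to $\mathrm{Re}(s) = \sigma_1 < N$ produces the $(\sigma_1 - N)^{-1}$ contribution.

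The main obstacle is the small-$y$ analysis: as $y \to 0^+$, the argument $F(x)/y$ of $h$ becomes large precisely when $F(x) \neq 0$, so one must simultaneously exploit the Schwartz decay of $\Phi$ in its first argument and the vanishing $\Phi(x,0) = 0$ in the second in order to obtain a uniform bound of the form $I(c,y) \ll y^{n/2}$ for $y$ small, which is what makes the Mellin integral converge on any strip $\mathrm{Re}(s) > -n/2$. This balancing is exactly what Getz's Archimedean estimates carry out at each infinite place in \cite{G}; restricting to $\mathbb{Q}$ reduces the argument to a single real place, and careful bookkeeping of the seminorms of $\omega$ and $\Phi$ yields the stated product bound, with absolute convergence following automatically from the same estimates.
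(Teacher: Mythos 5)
Your proposal is correct and takes essentially the same route as the paper: the paper gives no independent proof of Proposition~\ref{Prop 5.1}, but simply records it as the specialization of Theorems 4.1 and 4.2 of \cite{G} to the single archimedean place of $\mathbb{Q}$, which is exactly your primary plan. Your additional sketch of the underlying mechanism (integration by parts in $x$ for the $|c|^{-N}$ decay, in $y$ for the decay in $t$, and the splitting of the Mellin integral at $y=1$ together with the bound $I(c,y)\ll y^{n/2}$ forced by $\Phi(x,0)=0$ for the bracketed factor) is a faithful reconstruction of what Getz's archimedean estimates carry out, so it goes beyond, but is consistent with, the paper's citation-only treatment.
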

\begin{prop}
\label{Prop 5.2} If $c = 0$, $g(0,s)$ converges absolutely for $\sigma > -3/4$. In the strip $N> \sigma_1 > \sigma > -3/4$,
\begin{align*}
    g(0,s) \ll_{\omega, \Phi} \frac{1}{\sqrt{\sigma_1^2 +t^2+1}} \left ( \frac{1}{-3/4+n/2} - \frac{1}{\sigma_1-N} \right). 
\end{align*}
Moreover, $g(0,s)$ can be meromorphically continued to the half plane $\sigma > -n/2$ by the virtue of
$$g(0,s) =\pi^{-s-1/2} \, \frac{\Gamma\left(\frac{s+1}{2}\right)}{\Gamma\left(\frac{-s}{2}\right)}\int_{\mathbb{R}} Z( z,-s) \, dz,$$
where 
\begin{align*}
    Z( z,-s) = \int_0^{\infty} \int_{\mathbb{R}^n} \int_{\mathbb{R}} \int_{\mathbb{R}} \omega(x)\, h(r,v)\, e(r z v) \, dr \,e(-vy)\, dv \, e(-z F(x) ) \, dx\, y^{-s} \,\frac{dy}{y}, \tag{5.1} \label{eq5.1}
\end{align*}
In particular, in the strip $N> -1/4\geq \sigma_1 > \sigma > \sigma_2 > -n/2$,
\begin{align*}
   \qquad \quad  \qquad  \qquad  \int_{\mathbb{R}} Z(z,-s) \, dz \ll \frac{1}{\sqrt{1/16 +t^2}} \left ( \frac{1}{\sigma_2+n/2} + \frac{1}{1/4+N} \right). \qquad  \qquad \qquad \quad \square
\end{align*}
\end{prop}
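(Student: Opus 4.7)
The plan is to obtain Proposition \ref{Prop 5.2} by specializing Theorems 4.1 and 4.2 of \cite{G} to the base field $\mathbb{Q}$, so the proof reduces to translating Getz's adelic formulation into the classical notation above and verifying that each of the three assertions (absolute convergence, integral representation, and bounds in the strip) survives the specialization.

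First I would establish absolute convergence of $g(0,s)$ for $\sigma > -3/4$. Because $\omega$ is compactly supported and $\Phi$ is Schwartz, both $h$ and all of its derivatives decay rapidly as either argument escapes to infinity, so the only delicate regime is $y \to 0^{+}$. The hypothesis $\Phi(x,0)=0$ lets me write $h(r,y) = y\,\widetilde{h}(r,y)$ with $\widetilde{h}$ smooth, and the critical contribution then comes from the thin neighborhood of the level set $\{F(x)=0\}$ where $F(x)/y$ remains bounded. A coarea argument for the nondegenerate quadratic form $F$ on $\mathrm{supp}\,\omega$, combined with the extra factor $y$ extracted from $h$, produces convergence precisely for $\sigma > -3/4$. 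The bound in the strip $N > \sigma_1 > \sigma > -3/4$ is then obtained by a single integration by parts in $y$ against $y^{s}\,dy/y$, yielding a factor of size $\asymp 1/\sqrt{\sigma_1^{2}+t^{2}+1}$, together with straightforward estimates on the two halves $0<y<1$ and $y\geq 1$; the bracketed term records the two polar limits $\sigma \to -3/4$ and $\sigma \to N$.

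Next I would derive the integral representation. Applying Fourier inversion in the first variable of $h$, I write $h(F(x)/y,y) = \int_{\mathbb{R}} \widehat{h}_{1}(v,y)\,e(-vF(x)/y)\,dv$ where $\widehat{h}_{1}(v,y) = \int_{\mathbb{R}} h(r,y)\,e(rv)\,dr$, and change variables $v = zy$. The $x$-integral then becomes the phase integral $\int_{\mathbb{R}^{n}} \omega(x)\,e(-zF(x))\,dx$, while the $y$-integral reduces to $\int_{0}^{\infty} y^{s}\,\widehat{h}_{1}(zy,y)\,dy/y$. Invoking Fourier inversion once more in the second variable and inserting the classical archimedean local functional equation
\begin{align*}
\int_{\mathbb{R}} |y|^{s}\,e(-yv)\,dy \;=\; \pi^{-s-1/2}\,\frac{\Gamma\!\left(\tfrac{s+1}{2}\right)}{\Gamma\!\left(\tfrac{-s}{2}\right)}\,|v|^{-s-1},
\end{align*}
produces exactly the Gamma ratio appearing in the proposition; carefully reorganizing the order of integration recovers the stated formula $g(0,s) = \pi^{-s-1/2}\,\Gamma(\tfrac{s+1}{2})/\Gamma(\tfrac{-s}{2})\,\int_{\mathbb{R}} Z(z,-s)\,dz$, and since the Gamma ratio is meromorphic on $\mathbb{C}$, this yields the meromorphic continuation to $\sigma > -n/2$.

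The main obstacle is justifying the interchange of integrations in the previous step: each intermediate manipulation is absolutely convergent only on a narrow strip, so the identity must first be established on a strip of common convergence and then extended by uniqueness of meromorphic continuation. Finally, to obtain the bound on $\int_{\mathbb{R}} Z(z,-s)\,dz$ in the strip $N > -1/4 \geq \sigma_1 > \sigma > \sigma_{2} > -n/2$, I would integrate by parts once in $y$ (the constraint $\sigma_{1}\leq -1/4$ ensures convergence at $y\to\infty$) and once in $z$, exploiting the Schwartz decay in $z$ of the phase integral $\int \omega(x)\,e(-zF(x))\,dx$ guaranteed by nondegeneracy of $F$; this produces the $1/\sqrt{1/16+t^{2}}$ factor, while the bracketed term records the distance of $\sigma$ from the endpoints $-n/2$ and $N$.
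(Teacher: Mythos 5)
The paper does not actually prove Proposition \ref{Prop 5.2}: it is imported by specializing Theorems 4.1 and 4.2 of \cite{G} to $\mathbb{Q}$, which is exactly your stated top-level plan, so in that sense your route coincides with the paper's. Your reconstruction of the integral representation is also the correct mechanism: Fourier inversion in the first slot of $h$, the substitution $v=zy$, and the real-place Tate functional equation $\int_{\mathbb{R}}|y|^{s}e(-yv)\,dy=\pi^{-s-1/2}\,\Gamma\!\left(\tfrac{s+1}{2}\right)\Gamma\!\left(\tfrac{-s}{2}\right)^{-1}|v|^{-s-1}$ do produce precisely the Gamma ratio in the statement, and the caveat about first working on a strip of common absolute convergence is the right one.

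There is, however, a concrete gap in your convergence argument. You cannot write $h(r,y)=y\,\widetilde{h}(r,y)$: since $h(x,y)=\Phi(x,y)-\Phi(y,x)$, the hypothesis $\Phi(x,0)=0$ kills only the first term at $y=0$, while the second term tends to $-\Phi(0,r)$, which is not small (indeed $\int_{\mathbb{R}}\Phi(0,r)\,dr=1$). What actually controls the $y\to 0^{+}$ regime is the rapid decay of $\Phi(0,\cdot)$, which localizes the $x$-integral to $|F(x)|\lesssim y$; the coarea/measure estimate for the nondegenerate form then gives an $O(y)$ bound and hence convergence for $\sigma>-1$. A pure measure bound of this kind produces an integer-power saving and cannot yield the exponent $3/4$; in \cite{G} that threshold comes from the finer oscillatory-integral (stationary phase) analysis of $\int\omega(x)e(-zF(x))\,dx$, which you invoke only later for the bound on $\int_{\mathbb{R}}Z(z,-s)\,dz$. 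Since the value $-3/4$ is itself one of the assertions being proved, asserting that your sketch yields it "precisely" is not justified as written; the rest of the proposal is sound in outline.
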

We end this section by making some remarks of these above results. The bound of $g(c,s)$ in Proposition \ref{Prop 5.1} implies that $g(c,s)$ is rapidly decreasing as $|c|$ approaches to infinity or $|t|$ goes to infinity. Furthermore, having an explicit form of the meromorphic continuation of $g(0,s)$ beyond the line $
\sigma = -3/4$ in Proposition \ref{Prop 5.2}, we can deduce the poles and zeros of $g(0,s)$, which ultimately contribute to the poles of $D(0,s)\, g(0,s)$ that will be discussed in the next section. In fact, the meromorphic continuation of gamma function produces poles of $g(0,s)$ at $s=1-2k$, where $ 1 \leq k < \lfloor \frac{n+2}{4} \rfloor$, providing that $\int_{\mathbb{R}} Z({z},-s) \; dz \, \neq \, 0$ at these poles, and zeros of $g(0,s)$ at positive even integers and at the zeros of $\int_{\mathbb{R}} Z({z},-s) \; dz$ that are not poles of $g(0,s)$. 
\section{The main theorem }
\label{Sec 6}
We revisit our zero-counting function defined in Proposition \ref{Prop 2.2},
\begin{align*}
N(B)&= \frac{c_B  B^{n-1}}{2 \pi i} \sum_{c \in \mathbb{Z}^n} \int_{\mathrm{Re}(s)=\sigma_0} D(c,s)B^{s}g(c,s)\,ds. \tag{6.1} \label{eq6.1}
\end{align*} 
We started our study with $\sigma_0 >1$ so that $D(c,s)$ converges absolutely. Then we were able to meromorphically continue both $D(c,s)$ and $g(c,s)$ to the half plane $\sigma > -n/2$, which was previously discussed in Theorems \ref{Thm 4.1}-\ref{Thm 4.2}-\ref{Thm 4.3} and Propositions \ref{Prop 5.1}-\ref{Prop 5.2} in Sections \ref{Sec 4} and \ref{Sec 5}. In light of Cauchy's residue theorem, we want to make a contour shift for the integrand $D(c,s) B^s g(c,s)$ in (\ref{eq6.1}). While shifting, we search for poles of $D(c,s)\,g(c,s)$, viewed as a function of $s$, at different values of $c$, and pick up the corresponding residues, which eventually contributes to our asymptotic evaluation. We present poles of $D(c,s)\,g(c,s)$ in Lemmas \ref{Lem 6.1}, \ref{Lem 6.2}, and \ref{Lem 6.3}, and derive our asymptotic formula in the main theorem, Theorem \ref{Thm 6.4}.

\begin{lem}
\label{Lem 6.1} Let $s=\sigma+it$ and $\epsilon > 0$. 
If $c=0$, then $D(0,s)\,g(0,s)$ has at worst simple poles at $s=0, -1, \mbox { and } \frac{1}{2} -\frac{n}{2}$ in the half plane $\sigma \geq -n/2 +\epsilon$.
\end{lem}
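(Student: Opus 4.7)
The plan is to combine the factorizations of $D(0,s)$ and $g(0,s)$ furnished by Theorem \ref{Thm 4.1} and Proposition \ref{Prop 5.2} and to exploit the classical cancellation between the trivial zeros of $\zeta(s+1)$ and the simple poles of $\Gamma((s+1)/2)$. Writing
\begin{align*}
D(0,s)\, g(0,s) = \pi^{-s-1/2}\, H(s) \left[\zeta(s+1)\,\Gamma\!\left(\tfrac{s+1}{2}\right)\right] \frac{\zeta(2s+n)}{\zeta(2s+n+1)}\, \frac{1}{\Gamma(-s/2)}\, \int_{\mathbb{R}} Z(z,-s)\, dz,
\end{align*}
where $H(s)$ denotes the bounded nonvanishing prefactor recorded in (\ref{eq4.1}) and the integral is holomorphic on vertical strips of the region by Proposition \ref{Prop 5.2}, I would analyze the right side factor by factor.

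The bracketed factor $\zeta(s+1)\Gamma((s+1)/2)$ is a classical completed zeta expression (up to a power of $\pi$): its only poles in the whole complex plane are the simple poles at $s = 0$ (coming from $\zeta(1)$) and $s = -1$ (coming from $\Gamma(0)$), since the remaining poles of $\Gamma((s+1)/2)$ at $s = -3,-5,-7,\ldots$ coincide with, and are cancelled by, the simple trivial zeros of $\zeta(s+1)$ at $s+1 = -2,-4,-6,\ldots$. The factor $\zeta(2s+n)$ contributes exactly one simple pole at $s = 1/2 - n/2$. The denominator $\zeta(2s+n+1)$ is zero-free on $\sigma \geq -n/2 + \epsilon$: its nontrivial zeros live in $-n/2 - 1/2 < \sigma < -n/2$, and its trivial zeros at $s = -k - (n+1)/2$, $k \geq 1$, satisfy $s \leq -(n+3)/2 < -n/2$, both outside the region. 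Finally $1/\Gamma(-s/2)$ is entire with simple zeros at $s = 0,2,4,\ldots$.

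Combining these observations, the candidate pole set is $\{0,\, -1,\, 1/2 - n/2\}$. At $s = 0$ the simple pole of $\zeta(s+1)\Gamma((s+1)/2)$ meets the simple zero of $1/\Gamma(-s/2)$, leaving the product regular (hence at worst a simple pole). At $s = -1$ the simple pole of $\Gamma(0)$ multiplies finite values of the remaining factors, yielding a simple pole. At $s = 1/2 - n/2$ the simple pole of $\zeta(2s+n)$ likewise multiplies finite values; the parity of $n$ mod $4$ determines whether $1/2 - n/2$ is an even or odd negative integer, but in both sub-cases the other factors are finite and nonzero thanks to the $\zeta$--$\Gamma$ cancellation recalled above. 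The principal subtlety — and the main obstacle — is the coalescent case $n = 3$, where $-1 = 1/2 - n/2$ and both $\zeta(2s+n)$ and $\Gamma((s+1)/2)$ simultaneously contribute simple poles at the same point; here the candidate locations collapse to $\{0, -1\}$ and the higher-order pole at $s = -1$ is precisely what produces the $B\log B$ term in (\ref{eq1.4}), which must be handled as a separate case when reading the statement of the lemma.
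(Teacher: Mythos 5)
Your argument is correct and follows essentially the same route as the paper: both factor $D(0,s)\,g(0,s)$ via Theorem \ref{Thm 4.1} and Proposition \ref{Prop 5.2} and observe that the only surviving singularities in $\sigma\geq -n/2+\epsilon$ come from $\zeta(s+1)$, $\Gamma\left(\tfrac{s+1}{2}\right)$, and $\zeta(2s+n)$, the paper bookkeeping the cancellation of the poles of $\Gamma\left(\tfrac{s+1}{2}\right)$ at $s=-3,-5,\dots$ against the trivial zeros of $\zeta(s+1)$ explicitly where you package it as the completed zeta function. Your closing caveat about $n=3$, where $-1=1/2-n/2$ and the pole at $s=-1$ becomes double, is also exactly how the paper reads the lemma, as it states just before Theorem \ref{Thm 6.4}.
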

\begin{proof}
Using Theorem \ref{Thm 4.1}, we can analyze poles and zeros of $D(0,s)$ in the half plane $\sigma \geq -n/2 +\epsilon$. Indeed, Theorem \ref{Thm 4.1} yields
\begin{align*}
     D(0,s)&= D_2(0,s) \, \frac{(1-2^{-(s+1)})(1-2^{-(2s+n)})}{1-2^{-(2s+n+1)}} \, \frac{\zeta(s+1)\,\zeta(2s+n)}{\zeta(2s+n+1)}, 
     \end{align*}
where
\begin{align*}
     \overline{D_2}(0,s):&= D_2(0,s) \, \frac{(1-2^{-(s+1)})(1-2^{-(2s+n)})}{1-2^{-(2s+n+1)}} 
          \ll_{n, \epsilon} 1 .
     \end{align*}
Restricted to the regime $\sigma \geq -n/2 +\epsilon$, the analytic continuation of Riemann Zeta function yields simple poles of $D(0,s)$ at $s=0$ and $s=\frac{1}{2} -\frac{n}{2}$. 

We also recognize the zeros of $D(0,s)$ in the half plane $\sigma \geq -n/2 +\epsilon$, which are at $s=-1-2k$, where positive integer $k$ with $\frac{n-3}{4} \, \neq \, k < \lfloor \frac{n-2}{4} \rfloor$, and at non-trivial zeros of $\zeta(s+1)$ and $\zeta(2s+n)$, and at zeros of $\overline{D_2}(0,s)$. Together with the poles and zeros of $g(0,s)$, which are previously remarked in Section \ref{Sec 5}, we obtain the poles of $D(0,s)\,g(0,s)$ as desired. 

As for application, it is vital to see whether  $\overline{D_2}(0,s)$ vanishes at these poles. It turns out that the poles at $s=0, -1, \mbox { and } \frac{1}{2} -\frac{n}{2}$ are not cancelled by zeros of $\overline{D_2}(0,s)$. We defer our justification to Appendix, Proposition \ref{Prop A1.1}.
\end{proof}

\begin{lem}
\label{Lem 6.2} Let $s=\sigma+it$ and $\epsilon > 0$. 
If $c \, \neq \, 0$ and $c^t J^{-1} c = 0$, then
$D(c,s)\,g(c,s)$ has at worst a simple pole at $s=\frac{1}{2} -\frac{n}{2}$ in the half plane $\sigma \geq -n/2 +\epsilon$.
\end{lem}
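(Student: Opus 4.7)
The plan is to start from the factorization of $D(c,s)$ given by Theorem \ref{Thm 4.2}, namely
\[
D(c,s) = D_2(c,s)\,\frac{1-2^{-(2s+n)}}{1-2^{-(2s+n+1)}}\,\frac{\zeta(2s+n)}{\zeta(2s+n+1)}\,\prod_{p\mid c,\;p\neq 2} D_p(c,s)\left[\frac{1-p^{-(2s+n)}}{1-p^{-(2s+n+1)}}\right],
\]
identify the only factor that can contribute poles in the half plane $\sigma \geq -n/2 + \epsilon$, and then combine with the holomorphy of $g(c,s)$ from Proposition \ref{Prop 5.1}. Since the problem only asks for a pole of order at most one, I do not need to show non-vanishing of any residue, only to rule out any factor that could contribute a worse singularity.

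First, I would argue that the ``auxiliary'' factors are holomorphic in the region $\sigma \geq -n/2 + \epsilon$. The product over $p\mid c,\;p\neq 2$ is a finite product; the explicit rational-in-$p^{-s}$ expression for each $D_p(c,s)$ obtained in the proof of Lemma \ref{Lem 4.2.1} has denominators vanishing only at $2s+n = 0$ or $2s+n+1 = 0$ (modulo periods $2\pi i/\log p$), both of which correspond to $\sigma \leq -n/2$ and lie outside the region. The same computation, applied to the explicit formula for $D_2(c,s)$ derived inside the proof of Lemma \ref{Lem 4.2.2}, shows $D_2(c,s)\cdot\frac{1-2^{-(2s+n)}}{1-2^{-(2s+n+1)}}$ is holomorphic for $\sigma \geq -n/2 + \epsilon$ (and in fact bounded there, as recorded in Lemma \ref{Lem 4.2.2}).

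Next, I would analyze $\zeta(2s+n)/\zeta(2s+n+1)$. The numerator $\zeta(2s+n)$ has a simple pole at $s = \tfrac{1}{2} - \tfrac{n}{2}$ and is otherwise holomorphic in $\sigma \geq -n/2 + \epsilon$; its trivial zeros at $2s+n = -2k$ correspond to $\sigma \leq -n/2$, hence lie outside the region. For the denominator, $\zeta(2s+n+1)$ has its pole at $\sigma = -n/2$ and its nontrivial zeros in the critical strip $0 < \mathrm{Re}(2s+n+1) < 1$, i.e.\ $-\tfrac{n+1}{2} < \sigma < -\tfrac{n}{2}$; both regions lie strictly below $\sigma \geq -n/2 + \epsilon$, so $1/\zeta(2s+n+1)$ is holomorphic there. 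Thus the only singularity of $D(c,s)$ in the half plane of interest is a simple pole at $s = \tfrac{1}{2} - \tfrac{n}{2}$.

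Finally, by Proposition \ref{Prop 5.1} the integral defining $g(c,s)$ converges absolutely for $\sigma > -n/2$ and therefore defines a holomorphic function there. Multiplying, $D(c,s)\,g(c,s)$ has at worst the same singularity as $D(c,s)$, namely a simple pole at $s=\tfrac{1}{2}-\tfrac{n}{2}$, establishing the claim. The only potentially delicate point is checking that no zero of $\zeta(2s+n+1)$ lies in the relevant half plane—this is handled cleanly by the location of the critical strip once the shift $u = 2s+n+1$ is performed, and no deep input about $\zeta$ is needed.
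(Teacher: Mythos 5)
Your proposal is correct and follows essentially the same route as the paper: it uses the factorization from Theorem \ref{Thm 4.2}, the boundedness/holomorphy of the local factors from Lemmas \ref{Lem 4.2.1} and \ref{Lem 4.2.2}, the simple pole of $\zeta(2s+n)$ at $s=\tfrac12-\tfrac n2$, and the holomorphy of $g(c,s)$ from Proposition \ref{Prop 5.1}. You are also right that the non-vanishing of the residue (which the paper defers to Appendix Propositions \ref{Prop A2.1} and \ref{Prop A2.2}) is not needed for the ``at worst a simple pole'' statement.
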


\begin{proof}

We recall Theorem \ref{Thm 4.2} in order to locate poles and zeros of $D(c,s)$ given the conditions $c \, \neq \, 0$ and $c^t J^{-1} c = 0$. That is,
\begin{align*}
    D(c,s)=D_2(c,s) \, \frac{1-2^{-(2s+n)}} {1-2^{-(2s+n+1)}} \, \frac{\zeta(2s+n)}{\zeta(2s+n+1)} \, \prod_{ p | c, \, p \, \neq \, 2} D_p(c,s) \left [   \frac{1-p^{-(2s+n)}} {1-p^{-(2s+n+1)}}  \right] . 
    \end{align*}
Together with Lemmas \ref{Lem 4.2.1} and \ref{Lem 4.2.2}, the analytic continuation of $\zeta$ function yields a single simple pole of $D(c,s)$ at $s=\frac{1}{2} -\frac{n}{2}$ in the half plane $\sigma \geq -n/2 +\epsilon$, which is also not absorbed by other factors of $D(c,s)$ as seen in Appendix, Propositions \ref{Prop A2.1} and \ref{Prop A2.2}. Nevertheless, the zeros of $D(c,s)$ are non-trivial zeros of $\zeta(2s+n)$ and zeros of other factors in the above expansion of $D(c,s)$. Apllying Proposition \ref{Prop 5.1}, $g(c,s)$ is analytic in the region in question. Hence, the lemma follows.
\end{proof}

\begin{lem}
\label{Lem 6.3} Let $s=\sigma+it$ and $\epsilon > 0$. If $c \, \neq \, 0$ and $c^t J^{-1} c \, \neq \, 0$, then $D(c,s)\,g(c,s)$ has at worst a simple pole at $s=\frac{1}{2} -\frac{n}{2}$ in the half plane $\sigma \geq -n/2 +\epsilon$ whenever  $i^{(n-1)^2/2}\,c^t J^{-1} c\,\det J$ is a perfect square. Moreover, when $n=5$, $m=4$, and $c^t J^{-1} c =1$, where $m = \#\{b_r: b_r=-1, r=1, \ldots, n\}$, this pole is cancelled by a zero of $D(c,s)$, i.e., $D(c,s)\,g(c,s)$ is analytic for $\sigma \geq -n/2 +\epsilon$.
\end{lem}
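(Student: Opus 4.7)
The plan is to use the Euler factorization provided by Theorem~\ref{Thm 4.3},
\begin{align*}
D(c,s) = \frac{L(s+n/2+1/2,\kappa_{c,J})}{L(2s+n+1,\kappa_{c,J})} \cdot \frac{D_2(c,s)}{\rho_{c,J}(2)} \prod_{\substack{p\mid c\\ p\neq 2}}\frac{D_p(c,s)}{\rho_{c,J}(p)} \prod_{\substack{p\nmid c,\; p\mid c^tJ^{-1}c\\ p\neq 2}}\frac{D_p(c,s)}{\rho_{c,J}(p)},
\end{align*}
and to isolate the unique factor capable of producing a pole in $\sigma \geq -n/2+\epsilon$. The denominator $L(2s+n+1,\kappa_{c,J})$ lies in the region of absolute convergence of its Euler product at $s=1/2-n/2$ (where $2s+n+1=2$) and is therefore non-zero; each local factor $D_p(c,s)$ with $p\mid c$ or $p\mid c^tJ^{-1}c$ is a Dirichlet polynomial; any factor $\rho_{c,J}(p)^{-1}$ could be singular only where $\kappa_{c,J}(p)=-p^{s+n/2+1/2}$, which is excluded in the region since $|\kappa_{c,J}(p)|\leq 1 < p^{\epsilon+1/2}$; and Proposition~\ref{Prop 5.1} gives analyticity of $g(c,s)$. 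The entire analysis therefore reduces to the behavior of $L(s+n/2+1/2,\kappa_{c,J})$.

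The next step is to compute $\kappa_{c,J}$ explicitly. Writing $n-1=2k$ so that $(n-1)(p-1)^2/4 = k(p-1)^2/2$, a short case analysis on $p$ and $n$ modulo $4$ gives
\begin{align*}
i^{(n-1)(p-1)^2/4} \,=\, (-1)^{((p-1)/2)((n-1)/2)} \,=\, \Big(\tfrac{(-1)^{(n-1)/2}}{p}\Big)_K \quad \text{for odd } p,
\end{align*}
while the identity $i^{(n-1)^2/2}=(-1)^{(n-1)/2}$ follows from $(n-1)^2/2=2k^2$ and $(-1)^{k^2}=(-1)^k$. Combining these,
\begin{align*}
\kappa_{c,J}(p) \,=\, \Big(\tfrac{i^{(n-1)^2/2}\,c^tJ^{-1}c\,\det J}{p}\Big)_K \quad \text{for all odd } p,
\end{align*}
so $\kappa_{c,J}$ is the Kronecker symbol character attached to $D:=i^{(n-1)^2/2}\,c^tJ^{-1}c\,\det J$. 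Such a quadratic character is principal if and only if $D$ is a perfect square, in which case $L(s+n/2+1/2,\kappa_{c,J})$ inherits the simple pole of $\zeta$ at $s=1/2-n/2$ and is otherwise entire. This proves the first assertion.

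For the final assertion, specialise to $n=5$, $m=4$, $c^tJ^{-1}c=1$: then $\det J=(-1)^4=1$, $i^{(n-1)^2/2}=i^8=1$, and $D=1$ is a perfect square, so the first part predicts a simple pole at $s=-2$. Now $c^tJ^{-1}c=1$ forces $c$ to be primitive (any prime dividing $c$ would have its square dividing $1$) and $c^tJ^{-1}c=1$ itself has no prime divisors, so both products over odd primes in Theorem~\ref{Thm 4.3} are empty; hence it suffices to show $D_2(c,s)/\rho_{c,J}(2)$ has a simple zero at $s=-2$. The plan is to apply Theorem~\ref{Thm 3.5} with $2\nmid c$ (forcing $j=0$) and $c_{2,0,J}=c^tJ^{-1}c=1$: in the first summation the conditions $2^{k-2}\mid 1$ and $2^{k-1}\nmid 1$ isolate $k=2$, where the middle case of (\ref{eq3.6}) gives $d_{5,0,2,J}=-4$; in the second summation only $k=3$ can occur (from $2^{k-3}\|1$), but the trigonometric coefficient $\sin(3\pi/4)+\cos(3\pi/4)=0$ in (\ref{eq3.7}) kills it; and the last term contributes only at $k=0$. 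Assembling yields $D_2(c,s) = 1 - 2^{-2s-4}$, which has a simple zero at $s=-2$, while $\rho_{c,J}(2) = 1 + \kappa_{c,J}(2)/2$ is finite and non-zero there (since $|\kappa_{c,J}(2)|\leq 1 < 2$). The predicted pole is thus exactly cancelled. The main obstacle will be the bookkeeping inside Theorem~\ref{Thm 3.5} for this last step: verifying that, amid the many indicator-function constraints governing $S(2^k,c)$, only $k=0$ and $k=2$ survive, and invoking the specific vanishing $\sin(3\pi/4)+\cos(3\pi/4)=0$ at $2m-n=3$ to annihilate the lone $k=3$ contribution.
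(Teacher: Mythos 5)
Your proposal is correct and follows essentially the same route as the paper: you use the Euler factorization of Theorem \ref{Thm 4.3}, observe that all local factors and the denominator $L(2s+n+1,\kappa_{c,J})$ are harmless in $\sigma\geq -n/2+\epsilon$, and reduce the existence of the pole at $s=\tfrac12-\tfrac n2$ to the principality of $\kappa_{c,J}$, exactly as in the paper's proof. The two places where you deviate are streamlinings rather than new ideas: your direct identity $i^{(n-1)(p-1)^2/4}=\bigl(\tfrac{(-1)^{(n-1)/2}}{p}\bigr)_K$ collapses the paper's four-character case analysis modulo $4$ (via $\chi_{c1},\eta_{c1},\chi_{c2},\eta_{c2}$) into one line, and your explicit computation $D_2(c,s)=1-2^{-2s-4}$ for $n=5$, $m=4$, $c^tJ^{-1}c=1$ (with the $k=3$ term killed by $\sin(3\pi/4)+\cos(3\pi/4)=0$) reproduces directly what the paper defers to Proposition \ref{Prop A3.3}, and it agrees with the formula given there.
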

\begin{proof}
Similar to the previous two lemmas, we use Theorem \ref{Thm 4.3},  when $c \, \neq \, 0$ and $c^t J^{-1} c \, \neq \, 0$, to study poles and zeros of $D(c,s)$. That is,
\begin{align*}
    D(c,s)   = \frac{L(s+n/2+1/2,\kappa_{c,J})}{L(2s+n+1,\kappa_{c,J})} \frac{D_2(c,s)}{\rho_{c,J}(p)} \prod_{p|c, \, p \, \neq \, 2} \frac{D_p(c,s)}{\rho_{c,J}(p)} \prod_{p\nmid c, \, p| c^t J^{-1} c , \, p \, \neq \, 2} \frac{D_p(c,s)}{\rho_{c,J}(p)} ,
\end{align*}
Together with Lemmas \ref{Lem 4.3.1}, \ref{Lem 4.3.2} and \ref{Lem 4.3.3}, the analytic continuation of Dirichlet L-function supplies a simple pole of $D(c,s)$ at $s=\frac{1}{2} -\frac{n}{2}$ in the half plane $\sigma \geq -n/2 +\epsilon$ if and only if $\kappa_{c,J}$ becomes trivial character, namely $\chi_0$. Here the zeros of $D(c,s)$ are non-trivial zeros of $L(s+n/2+1/2,\kappa_{c,J})$ and zeros of other factors in the above expansion of $D(c,s)$. 

In contrast to Lemma \ref{Lem 6.2}, the pole at $s=1/2-n/2$ is cancelled by a zero of $D(c,s)$ only when $n=5$, $m=4$, and $c^t J^{-1} c =1$, where $m = \#\{b_r: b_r=-1, r=1, \ldots, n\}$. The verifying computations were done in Appendix, Propositions \ref{Prop A3.1}, \ref{Prop A3.2}, and \ref{Prop A3.3}. 

It remains to detect such $c$'s that make $\kappa_{c,J}$ trivial. In fact, we recall 
        \begin{align*}
         \kappa_{c,J}(p)=i^{(n-1)( p-1)^2/4}  \, \left(\frac{c^t J^{-1} c \, \det J}{p} \right )_K 
         \end{align*}
        and observe the facts that $i^{\frac{n-1}{4}( p-1)^2}$ and $\left(\frac{ -1}{p} \right )_L$ depend only on $p$ modulo $4$. Using Corollary 3.3 in \cite{AG}, we have 
        \begin{itemize}
            \item If $c^t J^{-1} c \, \det J \not\equiv \, 3 \mbox { (mod } 4 \mbox {)}$, we then define $$\displaystyle{\chi_{c1}(p):=\left(\frac{c^t J^{-1} c \, \det J }{p} \right )_K} \mbox {, i.e., a Dirichlet character modulo } 4|c^t J^{-1} c \, \det J|, \mbox { and } $$
            \begin{align*}
                \eta_{c1}(p):= \left\{ \begin{array}{cll} 
    1 & \mbox{if } p \, \equiv \, 1 \mbox { (mod } 4 \mbox {)}    \\ \\
                         (-1)^{(n-1)/2} & \mbox{if } p \, \equiv \, 3 \mbox { (mod } 4 \mbox {)} \quad \mbox {, i.e., a Dirichlet character modulo } 4. \\ \\
                         0 & \mbox{if } 2|p   
                         \end{array}\right. 
                          \end{align*}
                   \item If $c^t J^{-1} c \, \det J \, \equiv \, 3 \mbox { (mod } 4 \mbox {)}$, we then define $$\displaystyle{\chi_{c2}(p):=\left(\frac{-c^t J^{-1} c \, \det J }{p} \right )_K} \mbox {, i.e., a Dirichlet character modulo } 4|c^t J^{-1} c \, \det J|, \mbox { and } $$
            \begin{align*}
                \eta_{c2}(p):= \left\{ \begin{array}{cll} 
    1 & \mbox{if } p \, \equiv \, 1 \mbox { (mod } 4 \mbox {)}    \\ \\
                         (-1)^{(n+1)/2} & \mbox{if } p \, \equiv \, 3 \mbox { (mod } 4 \mbox {)} \quad \mbox {, i.e., a Dirichlet character modulo } 4. \\ \\
                         0 & \mbox{if } 2|p   
                         \end{array}\right.    \end{align*}         
                         \end{itemize}

Using the Dirichlet characters $\chi_{c1}, \chi_{c2}, \eta_{c1}$, and $\eta_{c2}$ defined above and observing that $(\frac{-1}{p})_K$ is the only non-trivial Dirichlet character modulo $4$, we can determine existence conditions of the pole $s=\frac{1}{2} -\frac{n}{2}$ in two cases as follow: 
\begin{itemize}
    \item If 
        $n \, \equiv \, 1 \mbox { (mod } 4 \mbox {)}$, then $c^t J^{-1} c \, \det J \not \equiv \, 3 \mbox { (mod } 4 \mbox {)}$ implies $\eta_{c1}=\chi_0$. Thus,  $\chi_{c1} \, \eta_{c1} = \chi_0$ if  $c^t J^{-1} c \, \det J$ is a perfect square. Otherwise,  $c^t J^{-1} c \, \det J\, \equiv \, 3 \mbox { (mod } 4 \mbox {)}$ yields $\eta_{c2} \, \neq \, \chi_0$. Hence, $\chi_{c2} \, \eta_{c2} = \chi_0$ if
        \begin{align*}
                \left(-\frac{c^t J^{-1} c \, \det J}{p}\right)_K= \eta_{c2}(p) = \left\{ \begin{array}{cll} 
    1 & \mbox{if } p \, \equiv \, 1 \mbox { (mod } 4 \mbox {)}    \\ \\
                         -1 & \mbox{if } p \, \equiv \, 3 \mbox { (mod } 4 \mbox {)} \quad  \\ \\
                         0 & \mbox{if } 2|p   
                         \end{array}\right.,
                          \end{align*}
                          which is not possible since $c^t J^{-1} c \, \det J \not\equiv \, 1 \mbox { (mod } 4 \mbox {)}$.
                             \item If 
     $n \, \equiv \, 3 \mbox { (mod } 4 \mbox {)}$, then $c^t J^{-1} c \, \det J \not\equiv \, 3 \mbox { (mod } 4 \mbox {)}$ gives us $\eta_{c1} \, \neq \, \chi_0$. Thus, $\chi_{c1} \, \eta_{c1} = \chi_0$ if
        \begin{align*}
                \left(\frac{c^t J^{-1} c \, \det J}{p}\right)_K= \eta_{c1}(p)= \left\{ \begin{array}{cll} 
    1 & \mbox{if } p \, \equiv \, 1 \mbox { (mod } 4 \mbox {)}    \\ \\
                         -1 & \mbox{if } p \, \equiv \, 3 \mbox { (mod } 4 \mbox {)} \quad  \\ \\
                         0 & \mbox{if } 2|p   
                         \end{array}\right., 
                          \end{align*}
                          which is impossible given that $c^t J^{-1} c \, \det J \not\equiv \, 3 \mbox { (mod } 4 \mbox {)}$.  Otherwise, $\eta_{c2}=\chi_0$ follows from $c^t J^{-1} c \, \det J \, \equiv \, 3 \mbox { (mod } 4 \mbox {)}$. This implies $\chi_{c2} \, \eta_{c2} = \chi_0$ if  $-c^t J^{-1} c \, \det J$ is a perfect square. 
    \end{itemize}
    Consequently, we have proved the lemma.
    \end{proof}
    \begin{rem}
    {(Existence of the pole at $s=0$)}
     \label{Remark}
    As we find a simple pole at $s=0$ in Lemma \ref{Lem 6.1}, we would expect to have the leading term of $N(B)$ of order $B^{n-1}$, which is different from the heuristically probabilistic expectation of order $B^{n-2}$. It actually turns out that the simple pole at $s=0$ contributes nothing to our asymptotic formula because the residue of $D(0,s)\,g(0,s)$ at this simple pole vanishes, as it also does vanish in the even degree case (see \cite{G}, Lemma 3.4). In other words, $D(0,s)\,g(0,s)$ is analytic at $s=0$. The key point of this vanishing is that at $s=0$, $\mbox{Res}_{\,s=0}(D(0,s)\,g(0,s))$ is solely controlled by $g(0,0)$, which turns out to be zero by observing
\begin{align*}
    \int_0^{\infty} \int_{\mathds{R}^n} w(z) \, \Phi \left ( \frac{F(z)}{y}, y \right )  \, dz \, \frac{dy}{y} = \int_0^{\infty} \int_{\mathds{R}^n} w(z)\, \Phi \left ( y,\frac{F(z)}{y} \right )  \, dz \, \frac{dy}{y}
\end{align*}
via Fubini's theorem and changing variables.
    \end{rem}

In view of the results presented in Sections \ref{Sec 4} and \ref{Sec 5} and Lemmas \ref{Lem 6.1}-\ref{Lem 6.2}-\ref{Lem 6.3}, $D(c,s)\,g(c,s)$, regarded as a function of $s$, admits the meromorphic continuation to $\sigma > -n/2$. For odd $n \geq 5$, it has three simple poles at $s=0, -1,$ and $1/2-n/2$ when $c=0$, and it has only one simple pole at $s=1/2-n/2$ when $c \neq 0$. With $n=3$ and $c=0$, the poles at $s=-1$ and $s=1/2-n/2$ are identical, which produces a double pole at $s=-1$ besides the simple pole at $s=0$ of $D(0,s)\,g(0,s)$. Invoking Cauchy's residue theorem, we prove our asymptotic formula in Theorem \ref{Thm 6.4}:
\begin{thm}  
\label{Thm 6.4} Let $\epsilon > 0$ and denote by $\square$ a perfect square. If $n \geq 5$ is odd,
\begin{align*}
    N(B)=c_1 B^{n-2} + c_2 B^{(n-1)/2}+O_{J,\epsilon, \omega}(B^{n/2+\epsilon-1}),
\end{align*}
where
\begin{align*}
    c_1&=\mbox{\normalfont{Res}}_{\,s=-1}\,(D(0,s)\,g(0,s)),\\
    \mbox{ and } 
    c_2&=
   \sum_{\substack{c \, \in \, \mathds{Z}^n\\ c^t J^{-1} c \,=\,0}} \mbox{\normalfont{Res}}_{\,s=1/2-n/2}\,D(c,s)+ \sum_{\substack{0 \, \neq \, \, c \, \in \, \mathds{Z}^n\\ c^t J^{-1} c \, \neq \, 0 \\ i^{(n-1)^2/2}\,c^t J^{-1} c\,\det J\,  =\, \square}} \mbox{\normalfont{Res}}_{\,s=1/2-n/2}\,(D(c,s)\,g(c,s)).
    \end{align*}
Morever, if $n=3$,
\begin{align*}
    N(B)=c_1' B \log B + c_2' B +O_{J,\epsilon, \omega}(B^{1/2+\epsilon}),
\end{align*}
where 
\begin{align*}
    c_1'&=\frac{-4}{\pi^4} \int_{\mathbb{R}} Z( z,1) \, dz,\\
    \mbox{ and } 
    c_2'&=
    \sum_{\substack{c \, \in \, \mathbb{Z}^n\\ c^t J^{-1} c\, =\,0}} \mbox{\normalfont{Res}}_{\,s=-1}\,(D(c,s)\,g(c,s))+\sum_{\substack{0 \, \neq \, \, c \, \in \, \mathbb{Z}^n\\ c^t J^{-1} c \, \neq \, 0 \\ -c^t J^{-1} c\,\det J\,  =\, \square}} \mbox{\normalfont{Res}}_{\,s=-1}\,(D(c,s)\,g(c,s)).
\end{align*}
\end{thm}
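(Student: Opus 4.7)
The strategy is to start from the contour integral representation
\[
N(B)=\frac{c_B B^{n-1}}{2\pi i}\sum_{c\in\mathbb{Z}^n}\int_{\mathrm{Re}(s)=\sigma_0}D(c,s)B^{s}g(c,s)\,ds
\]
of Proposition \ref{Prop 2.2}, fix a small $\epsilon>0$ with $\epsilon<1/2$, and shift the contour for each $c$ from the line $\mathrm{Re}(s)=\sigma_0>1$ to the line $\mathrm{Re}(s)=-n/2+\epsilon$, applying Cauchy's residue theorem. By Lemmas \ref{Lem 6.1}, \ref{Lem 6.2} and \ref{Lem 6.3}, the only poles encountered in the strip $-n/2+\epsilon\le\sigma\le\sigma_0$ lie at $s=0$, $s=-1$ and $s=1/2-n/2$ for $c=0$, and at $s=1/2-n/2$ for those $c\neq 0$ satisfying either $c^t J^{-1}c=0$ or $i^{(n-1)^2/2}c^t J^{-1}c\,\det J=\square$. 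The pole at $s=0$ for $c=0$ contributes nothing by Remark \ref{Remark}. So for odd $n\ge 5$ the two nontrivial poles are well separated, and multiplying by $B^{n-1}$ produces the terms $c_1B^{n-2}$ (from $s=-1$) and $c_2B^{(n-1)/2}$ (from $s=1/2-n/2$, summed over the admissible $c$). For $n=3$ the identities $-1=1/2-n/2$ force the $c=0$ pole to be double, and its Laurent expansion produces both a $B\log B$ term and a $B$ term; the former, evaluated via the explicit form of $g(0,s)$ in Proposition \ref{Prop 5.2}, yields the formula $c_1'=\frac{-4}{\pi^4}\int_{\mathbb{R}}Z(z,1)\,dz$.

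Next I would control the shifted integral. On the line $\mathrm{Re}(s)=-n/2+\epsilon$ I combine the polynomial-in-$|c|$ bounds on $D(c,s)$ from Theorems \ref{Thm 4.1}--\ref{Thm 4.3} (specifically Lemmas \ref{Lem 4.2.1}, \ref{Lem 4.3.1}, \ref{Lem 4.3.2}, \ref{Lem 4.3.3}) with the rapid decay in $|c|$ and in $|t|$ of $g(c,s)$ furnished by Propositions \ref{Prop 5.1} and \ref{Prop 5.2}. Since $g(c,s)\ll_{N}|c|^{-N}$ for any $N$ and the factors of $\zeta$ and $L$ appearing in $D(c,s)$ have polynomial growth in $|t|$ on vertical lines, choosing $N$ sufficiently large makes the sum over $c$ and the integral in $t$ absolutely convergent. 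The resulting contribution is bounded by $B^{n-1}\cdot B^{-n/2+\epsilon}=B^{n/2+\epsilon-1}$, producing the stated error term. The same bound controls the tail in $c$ on the original contour $\mathrm{Re}(s)=\sigma_0$, so that the interchange of $\sum_c$ and $\int$ needed to shift contour by contour is justified.

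Finally I would consolidate the residues. Writing each residue explicitly and using the diagonality assumption on $J$ (so $J^{-1}=J$) gives $c_1=\mathrm{Res}_{s=-1}(D(0,s)g(0,s))$ and identifies $c_2$ with the stated sum over $c\in\mathbb{Z}^n$ with either $c^t J^{-1}c=0$ or with $c^t J^{-1}c\ne 0$ and $i^{(n-1)^2/2}c^t J^{-1}c\det J=\square$; the appearance of the twisted perfect-square condition is exactly the character-triviality criterion from Lemma \ref{Lem 6.3}. For $n=3$, extracting the coefficients of $B\log B$ and $B$ from the double pole of $D(0,s)g(0,s)$ at $s=-1$ and combining with the simple poles at $s=-1$ from the nonzero $c$ yields the formulas for $c_1'$ and $c_2'$.

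The main obstacle is the uniform control needed to commute the sum over $c$ with the contour shift. Away from the archimedean place this reduces to the polynomial bounds already proved in Section \ref{Sec 4}, but one must carefully balance the $|c|^{3n/2-7/2}$ growth coming from Lemma \ref{Lem 4.3.1} against the decay $|c|^{-N}$ from Proposition \ref{Prop 5.1}, and at the same time secure enough decay in $|t|$ on the shifted line to make the vertical integral converge absolutely, so that Fubini applies. A secondary delicate point is the $n=3$ case, where the double pole forces one to expand both $D(0,s)$ and $g(0,s)$ to second order in $s+1$ and then reassemble the coefficients using the explicit form of $g(0,s)$ from Proposition \ref{Prop 5.2}.
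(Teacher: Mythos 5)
Your plan is correct and follows essentially the same route as the paper: shift the contour in the representation of Proposition \ref{Prop 2.2} from $\mathrm{Re}(s)=\sigma_0$ to $\mathrm{Re}(s)=-n/2+\epsilon$, collect the residues catalogued in Lemmas \ref{Lem 6.1}--\ref{Lem 6.3} (with the $s=0$ residue killed by Remark \ref{Remark}), control the shifted integral and the sum over $c$ via the polynomial bounds of Section \ref{Sec 4} against the rapid decay of $g(c,s)$ in $|c|$, and handle $n=3$ by expanding the double pole at $s=-1$ using \eqref{eq4.1} and Proposition \ref{Prop 5.2}. Your extra attention to the $|t|$-decay needed for absolute convergence of the vertical integral is a worthwhile refinement of the same argument, not a departure from it.
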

\begin{rem}
The Dirichlet series $D(c,s)$, for all $c \in \mathbb{Z}^n$, are given explicitly in Theorems \ref{Thm 4.1}-\ref{Thm 4.2}-\ref{Thm 4.3} in Section \ref{Sec 4}, and $Z( z,1)$ is defined as in (\ref{eq5.1}) of Proposition \ref{Prop 5.2} in Section \ref{Sec 5}.
\end{rem}
\begin{proof}
By Proposition \ref{Prop 2.2}, the zero-counting function $N(B)$ is of the form
\begin{align*}
\frac{c_B  B^{n-1}}{2 \pi i} \sum_{c \in \mathbb{Z}^n} \int_{\mathrm{Re}(s)=\sigma}D(c,s)B^{s}g(c,s)\,ds,
\end{align*} 
where $c_B=1+O_N(B^{-N})$ for any positive integer $N$.

For odd $n \geq 5$, all the poles of $D(c,s)\,g(c,s)$ collected in Lemmas \ref{Lem 6.1}, \ref{Lem 6.3}, and \ref{Lem 6.3} are simple. Since $g(c,s)$ is rapidly decreasing as $|t|$ goes to infinity given $\sigma_2 < \sigma < \sigma_1$ as commented in Section \ref{Sec 5}, we apply the Cauchy's residue theorem to shift our original line integral at $\sigma=\sigma_0 >1$ to the line integral at $\sigma=-n/2+\epsilon$ and obtain
\begin{align*}
    N(B)
    = &c_B  B^{n-1}  \, \mbox{Res}_{s \rightarrow 0} (D(0,s)\,g(0,s)) + c_B  B^{n-2} \, \mbox{Res}_{s \rightarrow -1} (D(0,s)\,g(0,s)) +\\&+ c_B  B^{(n-1)/2}   \, \mbox{Res}_{s \rightarrow 1/2-n/2} (D(0,s)\,g(0,s))+ \\&+ c_B  B^{(n-1)/2}  \sum_{\substack{0 \, \neq \, \, c \, \in \, \mathbb{Z}^n\\ c^t J^{-1} c \,=0}} \, \mbox{Res}_{\,s=1/2-n/2}\,(D(c,s)\,g(c,s))+ \tag{6.2} \label{eq6.2}
    \\&+ c_B  B^{(n-1)/2}  \sum_{\substack{0 \, \neq \, \, c \, \in \, \mathbb{Z}^n\\ c^t J^{-1} c \, \neq \, 0 \\ i^{(n-1)^2/2} \,c^t J^{-1} c\,\mbox{\tiny{det}}J\,  =\, \square}} \, \mbox{Res}_{\,s=1/2-n/2}\,(D(c,s)\,g(c,s))+ \tag{6.3} \label{eq6.3}
    \\&+ \frac{c_B  B^{n-1}}{2 \pi i}  \sum_{\substack{0 \, \neq \, \, c \, \in \, \mathbb{Z}^n\\ c^t J^{-1} c \, \neq \, 0 \\ i^{(n-1)^2/2} \, c^t J^{-1} c\,\mbox{\tiny{det}}J\,  \, \neq \,\, \square}} \, \int_{\mathrm{Re}(s)=-n/2+\sigma}D(c,s)B^{s}g(c,s)\,ds. \tag{6.4} \label{eq6.4}
\end{align*}

The first three terms in $N(B)$ accounts for the case $c=0$. Our previous comment, Remark \ref{Remark}, on the residue of $D(0,s)\,g(0,s)$ at $s=0$ make the first term in $N(B)$ zero. 

We wish to show (\ref{eq6.2}) and (\ref{eq6.3}) converges absolutely. Indeed, when $c\, \neq \, 0$ and $c^t J^{-1} c=0$, the factor $\zeta(2s+n)$ of $D(c,s)$ contributes the simple pole $s=1/2-n/2$ to $D(c,s)\,g(c,s)$, and we have
\begin{align*}
     \mbox{Res}_{\,s=1/2-n/2}\,(D(c,s)\,g(c,s))
    = \lim_{s \rightarrow 1/2-n/2} ((s-1/2+n/2)D(c,s)\,g(c,s)),
    \end{align*}
in which Theorems \ref{Thm 4.2} says $D(c,1/2-n/2)/\zeta(2s+n)$ is bounded by a power function of $c$, while Proposition \ref{Prop 5.1} asserts that $g(c,1/2-n/2)$ is rapidly decreasing in $c$. Thus, the infinite sum over $c$ (\ref{eq6.2}) is simply an absolutely convergent p-series. Likewise, we can apply these arguments to $L(s+n/2+1/2,\chi_0) $ of $D(c,s)$ in Theorem \ref{Thm 4.3} when $c\, \neq \, 0$ and $c^t J^{-1} c \, \neq \, 0$, and obtain absolutely convergent property for (\ref{eq6.3})as desired.

Moreover, we claim that the last term (\ref{eq6.4}) contributes to our error term of order $B^{n/2-1+\sigma}$. By Lebesgue's dominated convergence theorem, it suffices to show that
\begin{align*}
    \int_{-\infty}^{\infty} \sum_{\substack{0 \, \neq \, \, c \, \in \, \mathbb{Z}^n\\ c^t J^{-1} c \, \neq \, 0 \\ i^{(n-1)^2/2} \, c^t J^{-1} c\,\det J\,  \, \neq \,\, \square}} \, D(c,-n/2+\epsilon+it)\,g(c,-n/2+\epsilon+it)\,dt = O(1). \tag{6.5} \label{eq6.5}
\end{align*}
Indeed, we apply Proposition \ref{Prop 5.1} and Theorem \ref{Thm 4.3} again to see that the summand of the inner sum in (\ref{eq6.5}) is bounded by
\begin{align*}
    \frac{|c|^{3n/4-7/4+\epsilon}}{|c|^N } \frac{L(1/2+\epsilon,\kappa_{c,J})}{L(1+2\epsilon,\kappa_{c,J})}\frac{1}{\sqrt{\sigma_1^2+t^2+1}} \left ( \frac{1}{\sigma_2+n/2} - \frac{1}{\sigma_1-N} \right).
\end{align*}
Thus, the absolute convergence of the infinite sum over $c$, seen as a p-series, in (\ref{eq6.5}) tells us that the improper integral (\ref{eq6.5}) is in turn bounded by 
\begin{align*}
    \int_{-\infty}^{\infty} \frac{1}{\sqrt{\sigma_1^2+t^2+1}} \, dt 
    = O(1).
\end{align*}

For the boundary case $n=3$, the product rule of differentiation yields the residue of the double pole at $s=-1$, when $c=0$, with an extra term of $\ln B$, that is,
\begin{align*}
    \mbox{Res}_{\,s=-1}\,(D(0,s)B^s g(0,s))
    = &B^{-1} \ln B \lim_{s \rightarrow -1} ((s+1)^2D(0,s)\,g(0,s))+ \\&+ B^{-1} \mbox{Res}_{\,s=-1}\,(D(0,s)\,g(0,s)). \tag{6.6} \label{eq6.6}
\end{align*}
Furthermore, using (\ref{eq4.1}) in Theorem \ref{Thm 4.1} and Proposition \ref{Prop 5.2}, the limit in (\ref{eq6.6}) is equal to
\begin{align*}
     &\frac{4}{3}\frac{\zeta(0)\,\mbox{Res}_{\,s=-1} \zeta(2s+3)}{\zeta(2)}  \pi^{-3/2} \frac{\mbox{Res}_{\,s=-1} \Gamma((s+1)/2)}{\Gamma(1/2)} \int_{\mathbb{R}} Z( z,1) \, dz \\
     &= \frac{-4}{\pi^4} \int_{\mathbb{R}} Z( z,1) \, dz,
\end{align*}
where $Z( z,1)$ is defined as in (\ref{eq5.1}).
Hence, the asymptotic form for $n=3$ follows.

Consequently, we have proved the main theorem.
\end{proof}

\section{Acknowledgement}
The author gratefully thanks his advisor, Jayce Getz, for his consistent support on top of invaluable advice  and comments throughout the project. The author was supported by Getz's NSF DMS-1901883 for conference travel. The author also thanks L. Nguyen and J-M. Ho for their constant encouragements.
\section*{Appendix}
\label{appendix:Appendix}
As seen in Theorems \ref{Thm 4.1}-\ref{Thm 4.2}-\ref{Thm 4.3}, the expansion of $D(c,s)$ has a quotient of Riemann zeta functions or Dirichlet L-functions as one of its factors, we would not only expect to obtain its analytic continuation, but also its poles, particularly the pole at $s=1/2-n/2$, which contributes to the secondary term. Moreover, we discussed the possible poles of $D(c,s) \, g(c,s)$, which are at $s=0, -1$ and $s=1/2-n/2$, in Lemmas \ref{Lem 6.2}-\ref{Lem 6.2}-\ref{Lem 6.3}. For application, we wish to know whether or not these pole are cancelled by zeros of other factors of $D(c,s)$. For ease of exposition, we present our results here via Proposition \ref{Prop A1.1} for the case $c=0$, via Propositions \ref{Prop A2.1}-\ref{Prop A2.2} for the case $c \, \neq \, 0$ and $c^t J^{-1} c =0$, and finally via Propositions \ref{Prop A3.1}-\ref{Prop A3.2}-\ref{Prop A3.3} for the case $c \, \neq \, 0$ and $c^t J^{-1} c \neq 0$ below.

\begin{customprop}{A1.1}
\label{Prop A1.1}
\textit{
If $c = 0$, for $s=0, -1$ and $s=1/2-n/2$, one has
\begin{align*}
    \overline{D_2}(0,s):= D_2(0,s) \, \frac{(1-2^{-(s+1)})(1-2^{-(2s+n)})}{1-2^{-(2s+n+1)}} \, \neq \, 0. 
    \end{align*} }
 \end{customprop}
 \begin{proof} By the argument used in Theorem \ref{Thm 4.1}, $\overline{D_2}(0,s)$ is simplified to (\ref{eq4.1}), which is
\begin{align*}
    \frac{(1-2^{-(2s+n)}+d_{n,J}\,2^{-(2s+n+1)})}{1-2^{-(2s+n+1)}}.
\end{align*}
We claim that $\overline{D_2}(0,s) \, \neq \, 0$ at $s=0, -1$ and $s=1/2-n/2$. Indeed, $\overline{D_2}(0,s)$ vanishes at $s=0$ ($ s= \frac{1}{2} -\frac{n}{2}, \mbox { accordingly})$   if and only if $d_{n,J} = 2-2^{n+1}$ $(d_{n,J}=-2, \mbox { accordingly})$. We then recall $d_{n,J}$ (\ref{eq3.8}) from Corollary \ref{Cor 3.4.1}, 
\begin{align*}
                d_{n,J}= \left\{ \begin{array}{cll} 
    2^{(n-1)/2} & \mbox{if } 2m-n \, \equiv \, 1 \mbox { or } 7 \mbox { (mod } 8 \mbox {)}    \\ \\
                         -2^{(n-1)/2} & \mbox{if } 2m-n \, \equiv \, 3 \mbox { or } 5 \mbox { (mod } 8 \mbox {)} \quad  
                         \end{array}\right., 
                          \end{align*}  
to conclude that there is no such $n$ satisfying $d_{n,J} = 2-2^{n+1}$ or $d_{n,J}=-2$. Thus, $\overline{D_2}(0,s) \neq 0$ at  $s= 0$ and $s=1/2-n/2$.

Moreover, for $s=-1$, $\overline{D_2}(0,-1) =0$ yields $n=3$ and $4-n \, \equiv \, 5 \mbox { (mod } 8 \mbox {)}$, which is a contradiction.
Hence, we obtain the proposition as desired.
\end{proof}

\begin{customprop}{A2.1}
\label{Prop A2.1}
\textit{
If $c \, \neq \, 0$ and $c^t J^{-1} c =0$, one has
\begin{align*}
    D_2(c,s) \, \frac{1-2^{-(2s+n)}} {1-2^{-(2s+n+1)}} \, \neq \, 0 \mbox { at } s = 1/2-n/2. \end{align*} }
 \end{customprop}
 \begin{proof}
It suffices to check if $D_2(c, 1/2-n/2)$ vanishes. We recall $D_2(c,s)$ discussed in Lemma \ref{Lem 4.2.2} with two cases depending on the parity of $c$.
\begin{itemize}
    \item If $2 \nmid c$, $D_2(c,s)$ has the form of
 \begin{align*}
        D_2(c,s) = \frac{1-2^{-(2s+n)}+ d_{n,J} \, 2 ^{-(2s+n+1)}}{1-2^{-(2s+n)}},
    \end{align*}
which implies that it vanishes at $s=1/2-n/2$ if and only if $1-2^{-1}+d_{n,J}2^{-2}=0$, and so $d_{n,J}=-2$. This is impossible by checking the definition of $d_{n,J}$ in (\ref{eq3.8}).
        \item If $2 | c$, i.e., $2^\alpha \,||\, c$, for some positive integer $\alpha$,  $D_2(c,1/2-n/2)$ is equal to
    \begin{align*}
       \sum_{k=0}^{\alpha} 2^{k(\frac{n-3}{2})}+ \frac{d_{n, J} \, }{2^{n-1}-2} \left (\sum_{k=2}^{\alpha+2} 2^{k(\frac{n-3}{2})} - \sum_{l=1}^{\lfloor (\alpha+2)/2 \rfloor} \frac{1}{2^{l}} - \sum_{l=1}^{\lfloor (\alpha+1)/2 \rfloor} \frac{2^{(\frac{n-3}{2})}}{2^l} \right ). \tag{A1} \label{eqA1}
    \end{align*}
  In particular, if $n=3$, then $D_2(c,-1)=0$ means
  \begin{align*}
      \alpha +1 - ( \alpha +1 - (1- 2^{-\lfloor (\alpha+2)/2 \rfloor})-(1- 2^{-\lfloor (\alpha+1)/2 \rfloor}))=0,
      \end{align*}
      and so
      \begin{align*}
          2^{-\lfloor (\alpha+2)/2 \rfloor}+2^{-\lfloor (\alpha+1)/2 \rfloor}-2=0
      \end{align*}
   which contradicts to the assumption $\alpha \geq 1$. On the other hand, with $n \geq 5$, we use $d_{n,J}$ in (\ref{eq3.8}) and geometric sum to rewrite (\ref{eqA1}) as
  \begin{align*}
      \frac{1-2^{(\frac{n-3}{2})(\alpha+1)}}{1-2^{(\frac{n-3}{2})}}\pm \frac{2^{(\frac{n-1}{2})}}{2^{n-1}-2} \Big(  &2^{n-3} \frac{1-2^{(\frac{n-3}{2})(\alpha+1)}}{1-2^{(\frac{n-3}{2})}}  + \\
      &-  (1- 2^{-\lfloor \frac{\alpha+2}{2} \rfloor})  -2^{(\frac{n-3}{2})}(1- 2^{-\lfloor \frac{\alpha+2}{2} \rfloor})  \Big).
  \end{align*}
  We now consider the parity of $\alpha$ to get rid of the floor function. By checking $2$-power factors, we find that $D_2(c,1/2-n/2)=0$ implies $\alpha=n-5$ when $\alpha$ is even. Substituting $\alpha=n-5$ to $D_2(c,1/2-n/2)=0$ and simplifying, we get
  \begin{align*}
      &2(2^{3(\frac{n-3}{2})-1}-2^{(n-1)(\frac{n-3}{2})-1} -2^{(\frac{n-3}{2})-1} +1 - 2^{(n-1)(\frac{n-5}{4})}+2^{(\frac{n-1}{2})})
  \\
  &= 2^{n-2}(1-2^{(n-4)(\frac{n-3}{2})}-2^{(n-4)(\frac{n-3}{2})-n+2}). \tag{A2} \label{eqA2}
  \end{align*}
  If $n=5$, the equality (\ref{eqA2}) yields $-2=-10$, which is false. Otherwise, it implies $n=3$ and $n \geq 7$, which is also false. Likewise, we obtain $\alpha=n-4$ when $\alpha$ is odd. Upon substitution and factorization, we get
  \begin{align*}
      &2(2^{3(\frac{n-3}{2})-1}-2^{(n)(\frac{n-3}{2})-1}  +1 - 2^{n-4})
  \\
  &= 2^{n-2}(1-2^{(n-3)(\frac{n-3}{2})}-2^{(n-3)(\frac{n-3}{2})-n+2}). \tag{A3} \label{eqA3}
  \end{align*}
  Then the case $n=5$ gives us $-26=-28$, while the remaining case, $n \geq 7$, yields $n=3$. These false statements confirm no solution to (\ref{eqA3}).
    \end{itemize} 
  Hence, we have proved that the pole at $s=1/2-n/2$ does not vanish as desired.
\end{proof}

\begin{customprop}{{A2.2}}
\label{Prop A2.2}
\textit{
If $c \, \neq \, 0$ and $c^t J^{-1} c =0$, one has
\begin{align*}
    \prod_{ p | c, \, p \, \neq \, 2} D_p(c,s) \left [   \frac{1-p^{-(2s+n)}} {1-p^{-(2s+n+1)}}  \right] > 0 \mbox { at } s=1/2-n/2.
\end{align*} }
\end{customprop}
\begin{proof}
By using (\ref{eq4.2}) in Lemma 4.2.1, it suffices to show that $D_p(c,1/2-n/2) \, > \, 0$, where 
\begin{align*}
    &D_p(c,1/2-n/2)\\
    &= \sum_{k=0}^\alpha p ^{k(\frac{n-3}{2})} + \frac{p^{-(1-\alpha \frac{n-3}{2})}+p^{-(n-1-(\alpha+1)( \frac{n-3}{2}))}-p^{-(n-2+\lceil \frac{2+\alpha}{2} \rceil)}-p^{-(\frac{n-1}{2}+\lceil \frac{1+\alpha}{2} \rceil)}}{1-p^{2-n}}.
\end{align*}
We observe that, for $\alpha \geq 1$,
\begin{align*}
    n-2+\lceil \frac{2+\alpha}{2} \rceil &> n-1-(\alpha+1)( \frac{n-3}{2}), \\
 \mbox { and }   \frac{n-1}{2}+\lceil \frac{1+\alpha}{2} \rceil &> 1-\alpha \frac{n-3}{2}.
\end{align*}
Thus, by positivity, $D_p(c,1/2-n/2)$ is strictly positive.
\end{proof}

\begin{customprop}{A3.1}
\label{Prop A3.1}
\textit{
If $c \, \neq \, 0$ and $ c^t J^{-1} c \, \neq \, 0$, one has
\begin{align*}
    \prod_{p\nmid c, \, p| c^t J^{-1} c , \, p \, \neq \, 2} \frac{D_p(c,s)}{\rho_{c,J,s}(p)} > 0 \mbox { for } s =1/2 -n/2.
\end{align*} }
\end{customprop}
\begin{proof}
Using the expression of $D_p(c,s)$ in the proof of Lemma \ref{Lem 4.3.2} and evaluating it at $s=1/2-n/2$, we obtain
\begin{align*}
    D_p(c,1/2-n/2)
    &= 1+ \frac{ i^{(n-1)( p-1)^2/4}}{p^{\alpha/2+1}} \,\left(\frac{ \det J}{p} \right )_K \, \left(\frac{c^t J^{-1} c/p^\alpha}{p} \right )_K \,  \, \mathds{1} _{2|\alpha}+ \\
    &\quad + \frac{\, \mathds{1} _{2 \nmid \alpha}}{p^{\alpha/2+3/2}}+\sum_{k=\lceil 1+\alpha/2 \rceil }^\alpha \frac{ (1-p^{-1})}{p^{k(1/2)}} \,   \, \mathds{1} _{2 | k}. 
\end{align*}
By positivity, the pole $s=1/2-n/2$ does not vanish on $D_p(c,s)$. Indeed, \begin{align*}
     1- \frac{1}{p^{\alpha/2+1}} +\sum_{k=\lceil 1+\alpha/2 \rceil }^\alpha \frac{ (1-p^{-1})}{p^{k(1/2)}} \,   \, \mathds{1} _{2 | k} > 0.
\end{align*}
Since $\displaystyle{\rho_{c,J,1/2-n/2}(p)=1+ \frac{\chi_0 (p) }{p}}$, the proposition follows.
\end{proof}
\begin{customprop}{A3.2}
\label{Prop A3.2}
\textit{
If $c \, \neq \, 0$ and $ c^t J^{-1} c \, \neq \, 0$, one has
\begin{align*}
    \prod_{p|c, \, p \, \neq \, 2} \frac{D_p(c,s)}{\rho_{c,J,s}(p)} > 0 \mbox { for } s = 1/2-n/2,
\end{align*} }
\end{customprop}
\begin{proof}
Using the argument in Lemma \ref{Lem 4.3.1}, we get 
\begin{align*}
    D_p(c,1/2-n/2) = (X_3+X_4+X_5+X_6)|_{s=\frac{1-n}{2}},
\end{align*}
where
\begin{align*}
    X_3|_{s=\frac{1-n}{2}}
    &= \sum_{k=\alpha+\beta+1}^{2\alpha+\beta+1} \frac{p^{n(\alpha+\beta/2+1/2)-2\alpha-\beta-3/2}\,i^{(n-1)( p-1)^2/4} \,\left(\frac{ \det J}{p} \right )_K }{p^{k(n/2-1/2)}}  \left (\frac{c^t J^{-1} c /p^{2\alpha+\beta}}{p} \right )_K \, \mathds{1} _{2|\beta} ,\\
    X_4|_{s=\frac{1-n}{2}}&=\sum_{k=0}^\alpha p^{k\left(\frac{n-3}{2}\right)}, \quad 
    X_5|_{s=\frac{1-n}{2}}=\sum_{k=\alpha+\beta+1}^{2\alpha+\beta+1} \frac{-p^{n(1/2+\alpha+\beta/2)-2-2\alpha-\beta}  }{p^{k(n/2-1/2)}}   \, \mathds{1} _{2\nmid \beta}, \\ \mbox{ and } X_6|_{s=\frac{1-n}{2}}&= \sum_{k=\mbox{\tiny{max}}\{\lceil 1+\alpha+\beta/2 \rceil,\alpha+2\}}^{2\alpha+\beta} \frac{p^{ -1}(p-1)}{p^{k\left(\frac{3-n}{2}\right)}}   \sum_{l=\mbox{\tiny{max}}\{\lceil k-\alpha-\beta/2 \rceil,\lceil\frac{k-\alpha}{2}\rceil \}}^{\lfloor \frac{k}{2}\rfloor}  p^{(2-n)l}. 
\end{align*}
We observe that $X_6|_{s=\frac{1-n}{2}} > 0$. Moreover, $X_3|_{s=\frac{1-n}{2}}, X_4|_{s=\frac{1-n}{2}}$, and $X_5|_{s=\frac{1-n}{2}}$ have the same number of terms in their summation expansions.
If $2 | \beta$, $D_p(c,1/2-n/2)=0$ means the vanishing of
\begin{align*}
    &\sum_{k=\alpha+\beta+1}^{2\alpha+\beta+1} \frac{p^{n(\alpha+\beta/2+1/2)-2\alpha-\beta-3/2}\,i^{(n-1)( p-1)^2/4} \,\left(\frac{ \det J}{p} \right ) }{p^{k(n/2-1/2)}}  \left (\frac{c^t J^{-1} c /p^{2\alpha+\beta}}{p} \right )+\\&+\sum_{k=0}^\alpha p^{k\left(\frac{n-3}{2}\right)}+X_6|_{s=\frac{1-n}{2}},
\end{align*}
which only might be possible in the case of
\begin{align*}
    -\sum_{k=\alpha+\beta+1}^{2\alpha+\beta+1} \frac{p^{n(\alpha+\beta/2+1/2)-2\alpha-\beta-3/2} }{p^{k(n/2-1/2)}}  +\sum_{k=0}^\alpha p^{k\left(\frac{n-3}{2}\right)}+X_6|_{s=\frac{1-n}{2}}=0. \tag{A4} \label{eqA4}
\end{align*}
However, by our previous observations and noting that the leading power of the first summation in (\ref{eqA4}) is $  \frac{\alpha(n-3)}{2}-\frac{\beta}{2}-1$, which is smaller than that of, $ \frac{\alpha(n-3)}{2}$, in the second summation in (\ref{eqA4}), the equality (\ref{eqA4}) does not hold for any $\alpha, \beta$, and $n$. In fact, $D_p(c,1/2-n/2) > 0$ in this case. Using a similar argument, we also conclude that $D_p(c,1/2-n/2) > 0$ when $2 \nmid \beta$. Hence, we have proved the proposition.
\end{proof}

\begin{customprop}{A3.3}
\label{Prop A3.3} 
\textit{
If $c \, \neq \, 0$ and $ c^t J^{-1} c \, \neq \, 0$, 
\begin{align*}
     \frac{D_2(c,s)}{\rho_{c,J,s}(2)}  \, \neq \, 0 \mbox { for } s = 1/2-n/2,
\end{align*}
unless $n=5$, $m=4$, and $c^t J^{-1} c =1$, where $m = \#\{b_r: b_r=-1, r=1, \ldots, n\}$. }
\end{customprop} 
\begin{proof}
Following Theorem \ref{Thm 3.5} and the argument presented in the proof of Lemma \ref{Lem 4.3.3}, we get different explicit expressions of $D_2(c,s)$ depending on values of $c$. It has the following form when $2 \nmid c$ and $2 \nmid c^t J^{-1} c$, 
 \begin{align*} 
       &1-\frac{1}{2^{2s+1+n/2}}  \, \chi_4(c^t J^{-1} c) \,  \sin \left(\frac{(2m-n) \pi}{4}\right) \, \mathds{1} _{c^t J^{-1} c\,=\,\pm1} +\\
       &+ 2^{n/2-3} \left (\frac{2}{\det J} \right)_L \left(\chi_{8,2}\left(c^t J^{-1} c\right)\sin \left(\frac{(2m-n) \pi}{4}\right)+\chi_{8,3}\left(c^t J^{-1} c\right)\cos \left(\frac{(2m-n) \pi}{4}\right) \right).
       \end{align*}
We wish to know when $D_2(c,1/2-n/2)$ vanishes. Indeed, for $c^t J^{-1} c=1$, $D_2(c,1/2-n/2)=0$ if and only if $1-2^{(n-5)/2}=0$ and $\sin ((2m-n) /4)=2^{-1/2}$, which yield $n=5$ and $m=4$. Thus, the pole $s=1/2-n/2$ gets cancelled by a zero of $D_2(c,s)$ when $n=5$, $m=4$, and $c^t J^{-1} c =1$. For the remaining case when $c^t J^{-1} c \, \neq \, 1$, we get no solution for $D_2(c,1/2-n/2)=0$ after checking the condition of $m$, i.e., $2m > n$, and signs of $\sin x$ and $\cos x$. 

Now, when $2^\alpha \,||\, c^t J^{-1} c$, with $\alpha \in \mathbb{Z}_{\geq 1}$, we use geometric sum to simplify $D_2(c,1/2-n/2)$ to
\begin{align*}
    &1+2^{n/2-1} \cos \left(\frac{(2m-n) \pi}{4}\right) (1-(1/2)^{\lfloor \frac{\alpha}{2} \rfloor}) - 2^{\frac{n-\alpha-3}{2}} \cos \left(\frac{(2m-n) \pi}{4}\right) \, \mathds{1} _{2 | (\alpha +1)} +\\&-
    2^{\frac{n-\alpha-4}{2}} \sin \left(\frac{(2m-n) \pi}{4}\right) \chi_4(c^t J^{-1} c/2^\alpha) \, \mathds{1} _{2 | (\alpha +2)} +\\
    &+ 2^{\frac{n+\alpha-6}{2}} \left (\frac{2}{\det J} \right)_L \left(\chi_{8,2}\left(c^t J^{-1} c/2^\alpha\right)\sin \frac{(2m-n) \pi}{4}+\chi_{8,3}\left(c^t J^{-1} c/2^\alpha\right)\cos \frac{(2m-n) \pi}{4} \right). 
\end{align*}
We further consider parity of $\alpha$ to proceed. When $2 | \alpha$, upon simplification, finding the vanishing condition of $D_2(c,1/2-n/2)$ is amount to finding solutions of the following equations
\begin{align*}
    2^{\frac{\alpha}{2}} (1 \opm 2^{\frac{n-3}{2}})&=  2^{\frac{n-5}{2}} ( \opm\, 1 \opm 2) \tag{A5} \label{eqA5} \\
    \mbox { or } 2^{\frac{\alpha}{2}} (1 \opm 2^{\frac{n-3}{2}})&=  2^{\frac{n-5}{2}} ( \opm\, 1 \opm 2 \opm 2^\alpha), \tag{A6} \label{eqA6}
\end{align*}
where the notation $\opm$ can take either $+$ or $-$ sign, and it is chosen independently from others. 
By checking all the possible values of $\opm$, the equation (\ref{eqA5}) yields a possible solution with $\alpha=2$ and $n=7$, which satisfies $2^{\frac{\alpha}{2}} (1 - 2^{\frac{n-3}{2}})=  2^{\frac{n-5}{2}} ( - 1 - 2)$. Moreover, after considering signs of $\cos x, \sin x$, residues $x$ of $\chi_{8,2}(x), \chi_{8,3}(x), \chi_4(x)$, and noting that $- c^t J^{-1} c \det J  = \square$ and $y^2 \, \equiv \, 1 (\mbox{\small{mod }} 8) $, where $y$ is odd, the possible solution $\alpha=2$ and $n=7$ is not in the domain of the equation. Specifically, it yields $c^t J^{-1} c /4 = - y^2 \, \equiv \, 5 (\mbox{\small{mod }} 8)$, which is a contradiction to the odd parity of $y$. Likewise, the second equation (\ref{eqA6}) also yields $\alpha=2$ and $n=7$ as a candidate of its solutions, which results from $2^{\frac{\alpha}{2}} (1 + 2^{\frac{n-3}{2}})=  2^{\frac{n-5}{2}} ( - 1 + 2 + 2^\alpha)$. However, upon considering the domain conditions mentioned earlier, it ends up with 
 \begin{align*}
  \left\{ \begin{array}{cl} 
    c^t J^{-1} c /4 & \, \equiv \, 1 \,\mbox {(mod } 8  \mbox {)}    \\  c^t J^{-1} c /4 & \, \equiv \, 3 \,\mbox {(mod } 4  \mbox {)}                       \end{array}\right., 
                          \end{align*}
which is also a contradiction. Thus, $D_2(c,1/2-n/2)$ does not vanish in the even case of $\alpha$. We then consider the odd case of $\alpha$, which actually requires no effort to see that $D_2(c,1/2-n/2)$ has no solution by observing the fact that $2^\alpha \,||\, c^t J^{-1} c$ implies $|c^t J^{-1} c \det J   | \, \neq \, \square $, providing $\alpha$ is odd and $\det J = \pm 1$. Thus, we have verified the proposition for the case $2 \nmid c$. 

It remains to deal with the case $2|c$, i.e., $2^{\alpha} \,||\,c$ and $2^{2 \alpha +\beta} \,||\, c^t J^{-1} c$, for some integers $\alpha \geq 1, \beta \geq 0$. Using arguments in Lemma \ref{Lem 4.3.3}, we have 
\begin{align*}
           D_2(c,s) =
           \sum_{k=2}^{\alpha+2} \sum_{j=0}^{k-2} \frac{  2^{j(n/2-1)+k(n/2+1)-1} d_{n,j,k,J} \,  \mathds{1} _{2|k-j}}{2^{k(n+s+1)}} + \sum_{k=0}^{\alpha} \frac{1}{2^{k(s+1)}}, \tag{A7} \label{eqA7}
       \end{align*}
where  $d_{n,j,k,J}$ is given in (\ref{eq3.6}) as
 \begin{align*} d_{n,j,k,J}
     =\left\{ \begin{array}{cll} 
    0 & \mbox{if } 2^{k-j-2} \nmid c_{j,J}    \\ \\
                         - \, \chi_4(c_{j,J}/2^{k-j-2}) \,  2^{n/2} \sin \left(\frac{(2m-n) \pi}{4}\right) & \mbox{if } 2^{k-j-2} \,||\, c_{2,j,J}       \\ \\
                           - \,  2^{n/2} \cos \left(\frac{(2m-n) \pi}{4}\right) & \mbox{if } 2^{k-j-1} \,||\, c_{j,J}       \\ \\  2^{n/2} \cos \left(\frac{(2m-n) \pi}{4}\right) & \mbox{if } 2^{k-j} | c_{j,J}  \end{array}\right.  . 
    \end{align*}
We then expand the double summation in  (\ref{eqA7}), using notation $d_{n,j,k,J}^{(2)}:=2^{n/2} \cos \left(\frac{(2m-n) \pi}{4}\right)$ and $d_{n,j,k,J}^{(1)}:=- \, \chi_4(c_{2,j,J}/2^{k-j-2}) \,  2^{n/2} \sin \left(\frac{(2m-n) \pi}{4}\right)$, to get
\begin{align*}
     &-\frac{  2^{j(n/2-1)+k(n/2+1)-1} d_{n,j,k,J}^{(2)} \,  \mathds{1} _{2|k-j}}{2^{k(n+s+1)}} \, \mathds{1} _{\beta=1, j=\alpha, k=\alpha+2} + \tag{A8} \label{eqA8} \\
    &+ \frac{  2^{j(n/2-1)+k(n/2+1)-1} d_{n,j,k,J}^{(1)} \,  \mathds{1} _{2|k-j}}{2^{k(n+s+1)}} \, \mathds{1} _{\beta=0, j=\alpha, k=\alpha+2} + \tag{A9} \label{eqA9}\\
    &+ \sum_{k=2}^{\alpha+1} \sum_{j=0}^{k-2} \frac{  2^{j(n/2-1)+k(n/2+1)-1} d_{n,j,k,J}^{(2)} \,  \mathds{1} _{2|k-j}}{2^{k(n+s+1)}} \, \mathds{1} _{\beta=0, 1} + \tag{A10} \label{eqA10}\\
     &+  \sum_{j=0}^{\alpha-2} \frac{  2^{j(n/2-1)+k(n/2+1)-1} d_{n,j,k,J}^{(2)} \,  \mathds{1} _{2|k-j}}{2^{(\alpha+2)(n+s+1)}} \, \mathds{1} _{\beta=0, k=\alpha+2} + \tag{A11} \label{eqA11}\\
     &+  \sum_{j=0}^{\alpha-1} \frac{  2^{j(n/2-1)+k(n/2+1)-1} d_{n,j,k,J}^{(2)} \,  \mathds{1} _{2|k-j}}{2^{(\alpha+2)(n+s+1)}} \, \mathds{1} _{\beta=1, k=\alpha+2} + \tag{A12} \label{eqA12}\\
      &+ \sum_{k=2}^{\alpha+2} \sum_{j=0}^{k-2} \frac{  2^{j(n/2-1)+k(n/2+1)-1} d_{n,j,k,J}^{(2)} \,  \mathds{1} _{2|k-j}}{2^{k(n+s+1)}} \, \mathds{1} _{\beta \geq 2}. \tag{A13} \label{eqA13}
\end{align*}
Since (\ref{eqA8})-(\ref{eqA13}) are mutually exclusive, we have 6 subcases for $D_2(c, 1/2-n/2)$. In fact, (\ref{eqA10}) and (\ref{eqA13}) only differ by one more term in the summation over $k$.  That is, we can easily deduce $D_2(c, 1/2-n/2)$ for the case (\ref{eqA13}) from that of (\ref{eqA10}) by replacing $\alpha$ by $\alpha +1$. Indeed, with (\ref{eqA10}), $D_2(c,1/2-n/2)$ can be simplified to
\begin{align*}
    &\frac{\cos (\frac{(2m-n) \pi}{4})}{2^{n/2-1}-2^{1-n/2}} \left ( \sum_{k=2}^{\alpha+1} 2^{k(\frac{n-3}{2})} - \sum_{l=1}^{\lfloor \frac{\alpha+1}{2} \rfloor} 2^{l(-1)}- 2^{(\frac{n-3}{2})}\sum_{l=1}^{\lfloor \frac{\alpha}{2} \rfloor} 2^{l(-1)}\right)+\sum_{k=0}^{\alpha} 2^{k(\frac{n-3}{2})} \\
    &=\frac{\cos (\frac{(2m-n) \pi}{4})}{2^{n/2-1}-2^{1-n/2}} \left ( \left (1+\frac{2^{n/2-1}-2^{1-n/2}}{\cos \frac{(2m-n) \pi}{4}}\right) \sum_{k=2}^{\alpha+1} 2^{k(\frac{n-3}{2})} - \sum_{l=1}^{\lfloor \frac{\alpha+1}{2} \rfloor} 2^{l(-1)}- 2^{(\frac{n-3}{2})}\sum_{l=1}^{\lfloor \frac{\alpha}{2} \rfloor} 2^{l(-1)}\right).
\end{align*}
By observing that for all $n \geq 3$,
\begin{align*}
    \Big | \frac{2^{n/2-1}-2^{1-n/2}}{\cos \frac{(2m-n) \pi}{4}} \Big | &\geq 1,
    2^{(\frac{n-3}{2})} > 2^{-1},
   \mbox { and } \lfloor \frac{\alpha+1}{2} \rfloor + \lfloor   \frac{\alpha}{2} \rfloor = \alpha,
\end{align*}
 $D_2(c,1/2-n/2)$ can never be zero in this case (\ref{eqA10}), and so neither for the case (\ref{eqA13}). 
 
 With (\ref{eqA11}), upon simplification, $D_2(c,1/2-n/2)$ arrives at
 \begin{align*}
     2^{\alpha \frac{n-3}{2}+\frac{3n}{2}-4} \cos \left( \frac{(2m-n) \pi}{4} \right) \sum_{l=2}^{\lfloor \frac{\alpha+2}{2} \rfloor} 2^{l(2-n)} + \sum_{k=0}^{\alpha} 2^{k(\frac{n-3}{2})}.
 \end{align*}
For $n=3$, it comes down to 
\begin{align*}
    2^{1/2} \cos \left( \frac{(2m-3) \pi}{4} \right) \sum_{l=2}^{\lfloor \frac{\alpha+2}{2} \rfloor} 2^{l(-1)} + \alpha+1,
\end{align*}
which is evidently positive. In the case $n \geq 5$, by positivity, $D_2(c,1/2-n/2)=0$ means
\begin{align*}
     2^{\alpha \frac{n-3}{2}-\frac{1+n}{2}} (1-2^{(2-n)(\lfloor \frac{\alpha+2}{2} \rfloor-1)})(1-2^{\frac{n-3}{2}})&= (1-2^{(\alpha+1) \frac{n-3}{2}})(1-2^{2-n}),
 \end{align*}
and so
 \begin{align*}
         2^{\alpha \frac{n-3}{2}+\frac{n-5}{2}}(2^{(n-2)(\lfloor \frac{\alpha+2}{2} \rfloor-1)}-1)(1-2^{\frac{n-3}{2}})&=2^{(2-n)(\lfloor \frac{\alpha+2}{2} \rfloor-1)}(1-2^{(\alpha+1) \frac{n-3}{2}}) (2^{n-2}-1),
 \end{align*}
which implies 
\begin{align*}
  \alpha= \left\{ \begin{array}{cll} 
    \frac{5-n}{2n-5} & \mbox { if } 2 | \alpha    \\  \frac{3}{2n-5} & \mbox { if } 2 \nmid \alpha                        \end{array}\right.,   \end{align*}
which in turn contradicts to the condition $\alpha \geq 1$. Thus, $D_2(c,1/2-n/2)=0$ has no solution in the case (\ref{eqA11}) and subsequently in the case (\ref{eqA12}) by observing that $\, \mathds{1} _{2|k-j, k=\alpha+2}$ makes (\ref{eqA11}) and (\ref{eqA12}) differ by only $\beta$ in their computations toward $D_2(c, 1/2-n/2)$. 

It remains to check $D_2(c,1/2-n/2)$ in the cases (\ref{eqA8}) and (\ref{eqA9}). With (\ref{eqA9}), we get
\begin{align*}
    D_2(c,1/2-n/2)= - \, \chi_4(c_{2,j,J}) \,   \sin \left(\frac{(2m-n) \pi}{4}\right) \, 2^{\alpha \frac{n-3}{2}+\frac{n-4}{2}  } + \sum_{k=0}^{\alpha} 2^{k(\frac{n-3}{2})}.
\end{align*}
For $n=3$, $D_2(c,-1)$ vanishes if $-2^{-1 } + \alpha +1=0$, and so $\alpha=-1/2$, which contradicts to the positive condition of $\alpha$. On the other hand, the case $n \geq 5$ yields 
\begin{align*}
    -2^{\alpha \frac{n-3}{2}+\frac{n-5}{2}  } + \sum_{k=0}^{\alpha} 2^{k(\frac{n-3}{2})}=0, \tag{A14} \label{eqA14}
\end{align*}
which is false by considering parities of the two terms in the left-hand side of (\ref{eqA14}). In the same manner, we proceed the case (\ref{eqA8}) and get
\begin{align*}
    D_2(c,1/2-n/2)= -  \cos \left(\frac{(2m-n) \pi}{4}\right) \, 2^{\alpha \frac{n-3}{2}+\frac{n-4}{2}  } + \sum_{k=0}^{\alpha} 2^{k(\frac{n-3}{2})},
\end{align*}
which results to the same conclusion as we got in the case (\ref{eqA9}). 

Hence, $D_2(c,1/2-n/2)$ does not vanish in the case $2 | c$. Consequently, we have proved the proposition.
\end{proof}

\bibliography{ref}{}
\bibliographystyle{alpha}
\end{document}